\DeclareMathAlphabet{\mathcal}{OMS}{cmsy}{m}{n}
\theoremstyle{plain}
\newtheorem{theorem}{Theorem}[section]
\newtheorem{definition}[theorem]{Definition}
\newtheorem{lemma}[theorem]{Lemma}
\newtheorem{corollary}[theorem]{Corollary}
\newtheorem{pro}[theorem]{Proposition}
\theoremstyle{definition}
\newtheorem{remark}[theorem]{Remark}
\newcommand{\R}{\mathbb{R}}
\newcommand{\N}{\mathbb{N}}
\newcommand{\bP}{\mathbb{P}}
\newcommand{\bQ}{\mathbb{Q}}
\newcommand{\cN}{\mathcal N}
\newcommand{\cF}{\mathcal F}
\newcommand{\cO}{\mathcal O}
\newcommand{\cR}{\mathcal R}
\newcommand{\p}{\partial}
\newcommand{\la}{\langle}
\newcommand{\ra}{\rangle}
\newcommand{\norm}[1]{\lVert #1 \rVert}
\renewcommand{\:}{\colon}
\newcommand{\upto}{\uparrow}
\newcommand{\dto}{\downarrow}
\newcommand{\wto}{\rightharpoonup} 
\newcommand{\wstar}{\overset{\ast}{\rightharpoonup}}
\newcommand{\para}{{\rm par}}
\newcommand{\weak}{{\rm weak}}
\let\div\relax
\DeclareMathOperator{\div}{div}
\DeclareMathOperator{\dist}{dist}
\DeclareMathOperator{\tr}{tr}
\let\tilde\relas
\newcommand{\tilde}[1]{\widetilde{#1}}
\DeclareMathOperator*{\esssup}{ess\,sup}
\newcommand{\avg}{{\rm avg}}
\newcommand{\BMO}{{\rm BMO}}
\numberwithin{equation}{section}
\title[Localised conditions for singularity formation in NSE with curved boundary]{Localised necessary conditions for singularity formation in the Navier-Stokes equations with curved boundary}
\author{Dallas Albritton$^\dag$}
\thanks{$\dag$: School of Mathematics, University of Minnesota, 206 Church St. SE, Minneapolis, MN 55455 \\Email address: \texttt{albri050@umn.edu}. (Corresponding author)}
\author{Tobias Barker$^\ast$}
\thanks{$\ast$: DMA, {\'E}cole Normale Sup{\'e}rieure, CNRS, PSL Research University, 75005 Paris \\Email address: \texttt{tobiasbarker5@gmail.com}}
\date{\today}
\begin{document}

\begin{abstract}
We generalize two results in the Navier-Stokes regularity theory whose proofs rely on `zooming in' on a presumed singularity to the local setting near a curved portion $\Gamma \subset \p\Omega$ of the boundary. Suppose that $u$ is a boundary suitable weak solution with singularity $z^* = (x^*,T^*)$, where $x^* \in \Omega \cup \Gamma$. Then, under weak background assumptions, the $L_3$ norm of $u$ tends to infinity in every ball centered at $x^*$: 
\begin{equation*}
\lim_{t \to T^*_-} \norm{u(\cdot, t)}_{L_{3}\left(\Omega \cap B(x^*,r)\right)} = \infty \quad \forall r > 0.
\end{equation*}
Additionally, $u$ generates a non-trivial `mild bounded ancient solution' in $\R^3$ or $\R^3_+$ through a rescaling procedure that `zooms in' on the singularity. 
Our proofs rely on a truncation procedure for boundary suitable weak solutions. The former result is based on energy estimates for $L_3$ initial data and a Liouville theorem. For the latter result, we apply perturbation theory for $L_\infty$ initial data based on linear estimates due to K.~Abe and Y.~Giga.
\end{abstract}
\maketitle

\textit{Keywords}:
Navier-Stokes equations, ancient solutions, Liouville theorems

\tableofcontents

\section{Introduction}
In this paper, we investigate the local regularity of Navier-Stokes solutions near a curved portion of the boundary. We are particularly interested in aspects of the regularity theory whose current proofs rely on `zooming in' on a presumed singularity and Liouville theorems, {\`a} la De Giorgi~\cite{degiorgi65}. This technique was famously used by Escauriaza, Seregin, and {\v S}ver{\'a}k~\cite{escauriazasereginsverak} to solve the difficult endpoint $s=3$ of the Ladyzhenskaya-Prodi-Serrin criterion:
\begin{equation}
	\label{eq:serrincond}
	v \in L_{s,l}(Q), \quad  \frac{3}{s} + \frac{2}{l} \leq 1.
\end{equation}
Namely, Escauriaza \emph{et al.} proved that any suitable weak solution in a parabolic ball $Q$ and belonging to the critical space $L_{3,\infty}(Q)$ is H{\"o}lder-continuous in each smaller parabolic ball $Q'$.

The presence of solid boundaries introduces new difficulties due to the pressure and the no-slip condition. In the interior, the pressure is often decomposed into
\begin{equation}
	\tilde{p} = (-\Delta)^{-1} \div \div (\varphi v \otimes v) \approx |v|^2
\end{equation} and a harmonic function $h$. Against the boundary, the above decomposition is not as effective, since $h$ does not evidently satisfy a useful boundary condition.
For this purpose, Seregin~\cite{seregincknflatboundary} introduced a new decomposition of the pressure that opened the door to new results with boundary. Partial regularity of suitable weak solutions up to the boundary was proven by Seregin, Shilkin, and Solonnikov~\cite{seregincknflatboundary,sereginshilkinsolonnikovboundarypartialreg2014}, and the regularity of $L_{3,\infty}$ solutions near the boundary was proven by Seregin, Shilkin, and Mikhailov~\cite{sereginl3inftyflatboundary,mikhailovshilkincurvedboundary2006} (see the survey~\cite{sereginshilkinsurvey}). The analogous theory in dimensions $n \geq 4$ was recently developed by Dong and Wang~\cite{dongguboundarypartial,dongwangboundarylebesgue}.




A further refinement of the regularity theory is due to Seregin~\cite{sereginl3}:
\begin{equation}
	\label{sereginproved}
	\lim_{t \to T^*_-} \norm{v(\cdot,t)}_{L_3(\R^3)} = \infty,
\end{equation}
where $T^* \in (0,\infty)$ is the maximal time of smoothness of a (supposedly singular) weak Leray-Hopf solution with initial data in $C^\infty_0$. A key observation is that, to control the local energy of a sequence of rescaled solutions, it is enough to control the original solution in $L_3$ along a \emph{sequence} of times. This idea was adapted to the half-space $\R^3_+$ by Seregin and the second author in~\cite{barkerser16blowup} and abstracted in~\cite{sereginsverakweaksols}, and the methods therein have led to further interesting developments~\cite{barkersereginsverakstability,globalweakbesov}. However, whereas the main theorem in Escauriaza \emph{et al.}~\cite{escauriazasereginsverak} is formulated locally and may be applied to any neighborhood of a singularity, the methods in the above works rely on a global approach. This begs the following question:
\begin{center}
\textbf{Q}. Does the critical $L_3$ norm tend to infinity in \emph{every neighborhood} of a presumed singularity?
\end{center}

We answer this question in the affirmative below:

\begin{theorem}
\label{cor:l3}
Let $\Omega \subset \R^3$ be a bounded $C^2$ domain and $\Gamma \subset \p\Omega$ a relatively open subset of the boundary. Let $x^* \in \Omega\cup \Gamma$ and $\Omega_{x^*,R} = \Omega \cap B(R)$ whenever $R>0$. Let $T^* > 0$.

Let $v$ be a boundary suitable weak solution in $\Omega \times ]0,T^*[$ vanishing on $\Gamma$ and satisfying
\begin{equation}\label{vboundedfinalmomentoftime}
 v\in L_{\infty}(\Omega \times ]0,t[) \; \text{ for all } 0<t<T^*.
 \end{equation}
If
\begin{equation}
	z^* = (x^*,T^*) \text{ is a singular point of } v,
\end{equation}
then, for all $R > 0$,
\begin{equation}\label{localL3}
\lim_{t \to T^*_-} \norm{v(\cdot, t)}_{L_{3}(\Omega_{x^*,R})} = \infty.
\end{equation}
\end{theorem}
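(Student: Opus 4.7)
My plan is to argue by contradiction. Suppose there exist $R_0 > 0$, $M < \infty$ and a sequence $t_n \upto T^*$ such that $\sup_n \norm{v(\cdot,t_n)}_{L_3(\Omega_{x^*,R_0})} \leq M$. The goal is to upgrade this ``sequential'' bound at a fixed scale into smoothness of $v$ near $z^*$, contradicting the assumption that $z^*$ is singular. The three principal ingredients are (i) the truncation procedure for boundary suitable weak solutions developed earlier in the paper, (ii) energy estimates for the Cauchy problem with $L_3$ initial data on $\R^3$ or $\R^3_+$, and (iii) a Liouville theorem for mild bounded ancient solutions whose initial data lie in $L_3$ (Seregin on $\R^3$, Barker--Seregin on $\R^3_+$).

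I would first apply the truncation procedure to $v(\cdot, t_n)$ in a slightly smaller neighbourhood $\Omega_{x^*, R_0'}$, producing divergence-free fields $u_{0,n}$ supported in $\Omega_{x^*, R_0}$ with $\sup_n \norm{u_{0,n}}_{L_3} \leq CM$. If $x^* \in \Omega$, extending by zero reduces the problem to the Cauchy problem on $\R^3$; if $x^* \in \Gamma$, a $C^2$ boundary-flattening chart transfers it to a neighbourhood of the origin in $\R^3_+$, at the cost of lower-order perturbative terms that are negligible at the scales considered below. I would then zoom in by selecting $\lambda_n \dto 0$ and setting
\[
 V_n(y,s) = \lambda_n\, v\bigl(x^* + \lambda_n y,\, t_n + \lambda_n^2 s\bigr),
\]
with $\lambda_n$ chosen so that a natural non-regularity indicator at scale $\lambda_n$ around $z^*$ is saturated at unit scale for $V_n$; this selection ensures that any limiting profile is non-trivial.

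Combining the scale-invariance of the $L_3$ norm with the $L_3$ energy estimates for the Cauchy problem, I would obtain uniform local estimates for $V_n$ on expanding cylinders that exhaust $\R^3$ or $\R^3_+$ in the limit. Extracting a subsequence yields a mild bounded ancient solution $U$ with $U(\cdot, 0) \in L_3$, inherited via weak-$L_3$ compactness from the assumption on $v(\cdot, t_n)$. The Liouville theorem then forces $U \equiv 0$, contradicting the non-triviality built into the choice of $\lambda_n$ and closing the argument.

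The principal obstacle is the pressure near the curved boundary. The interior decomposition $\tilde p = (-\Delta)^{-1}\div\div(\chi\, v\otimes v) + h$ does not adapt smoothly to $\Gamma$, since the harmonic part $h$ carries no useful boundary condition. I would therefore rely on Seregin's pressure decomposition, as extended to curved boundaries by Mikhailov--Shilkin, both when implementing the truncation and when legitimising the passage to a mild (rather than merely distributional) ancient limit. A closely related subtlety is checking that the truncated data genuinely fit into the $L_3$ mild solution theory and the boundary suitable weak framework simultaneously; making truncation, rescaling, and boundary-flattening compatible with the energy estimates is the technical heart of the argument.
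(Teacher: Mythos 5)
Your skeleton — truncation, rescaling, passage to a limit, Liouville-type contradiction — matches the paper's, but two pivots are off in ways that would block the proof. The first is the rescaling scale and the non-triviality mechanism. The paper chooses $\lambda_k = \sqrt{T^*-t_k}$ so that the $L_3$-bounded time $t_k$ rescales to $s=0$ and the singular time $T^*$ rescales to $s=1$. Your prescription — a scale that ``saturates a non-regularity indicator'' — is tied to spatial concentration near $z^*$, not to $T^*-t_n$, and there is no reason for the two to agree: if the saturation scale is much smaller than $\sqrt{T^*-t_n}$, the singularity escapes to $s \gg 1$ and is lost in the limit, and if it is larger, the rescaled solution is not even defined past $s=1$. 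Moreover, non-triviality of the blow-up limit is not automatic from the scale choice; the paper needs a persistence-of-singularities lemma (Proposition~\ref{stabilitysingularpointshalfspace}, proved for curved boundaries in the appendix), showing that the $L_3$-strong/$L_{3/2}$-weak limit of boundary suitable weak solutions with a singular point at $(0,1)$ is still singular at $(0,1)$. Your plan has no analogue of this step.

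The second pivot is the Liouville step. You cite ``a Liouville theorem for mild bounded ancient solutions with initial data in $L_3$'' attributed to Seregin and Barker--Seregin, but no such theorem is stated in those references, and as formulated it is false: having $U(\cdot,0) \in L_3$ at one time does not trivialize a bounded ancient solution. The paper's Lemma~\ref{lem:Liouvilletheorem} is entirely different: a boundary suitable weak solution on the \emph{finite} strip $\Omega_\infty \times ]1/2,1[$ with uniform compactness bounds and small $L_3^\weak$ norm at the \emph{terminal} time $s=1$ is bounded up to $s=1$; its proof runs through $\epsilon$-regularity at large distances and backward uniqueness, not an ancient-solution Liouville theorem. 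The required smallness at $s=1$ comes from the observation that $v(\cdot, T^*)$ is locally $L_3$ near $x^*$ (by weak compactness along the bounded sequence) and that $L_3$ functions concentrate to zero under $\lambda v(\lambda\,\cdot)$ as $\lambda \to 0$. Your proposal transports the $L_3$ bound only to $s=0$ — which is indeed where the paper uses it, after a Calder{\'o}n-type splitting into $L_2$ and $L_{10/3}$ pieces, for uniform energy estimates — but it provides no control at the singular time $s=1$, which is what actually closes the contradiction.
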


Theorem~\ref{cor:l3} is new even in the absence of boundary. In Section~\ref{sec:l3}, we present Theorem~\ref{thm:l3}, which adapts Theorem~\ref{cor:l3} to the $L_3^\weak$ setting in a quantitative way.

It is interesting to compare~\eqref{localL3} with recent `concentration' results~\cite{neustupa2013,liozawawang,prangeliouville,barker2018localized} in the Navier-Stokes literature. For example,~\eqref{localL3} generalizes Neustupa's concentration result~\cite{neustupa2013} at fixed spatial scales:
\begin{equation}
	\label{neustupa}
	\liminf_{t \to T^*_-} \norm{v(\cdot,t)}_{L_3(B(x^*,R))} \geq \epsilon,
\end{equation}
for all $R > 0$ and an absolute constant $\epsilon > 0$. In~\cite{liozawawang,prangeliouville}, it is demonstrated that
one may take $R \sim \sqrt{T^*-t}$ in~\eqref{neustupa} in exchange for $\sup_{x^* \in \R^3}$ within the $\liminf_{t \to T^*_-}$. The $\sup_{x^* \in \R^3}$ condition was removed in the recent work~\cite{barker2018localized} by Prange and the second author, which appeared after the present work. It would be interesting to know if~\eqref{localL3} remains true with $R \sim \sqrt{T^*-t}$. \\

Our analysis hinges on a localisation procedure which was introduced in~\cite{neustupapenel1999} by Neustupa and Penel in the context of interior one-component regularity criteria. Truncating a solution of the Stokes equations by a smooth cut-off $\Phi$ introduces a forcing term $f$ and typically requires solving $\div w = -\nabla \Phi \cdot v$ to correct the non-zero divergence. However, to analyze the regularity of a truncated Navier-Stokes solution (which, \emph{a priori}, may be singular), we desire $f$ and $w$ to be subcritical. Whereas $w$ gains a derivative and is less dangerous, $f$ contains the problematic term $(\Phi^2 - \Phi) v \cdot \nabla v$.
 This term appears to be supercritical, since in Theorem~\ref{cor:l3}, $v$ is only controlled in $L_3$ along a sequence of times. To circumvent this difficulty, one may rely on partial regularity in three dimensions, which guarantees that there is a parabolic annulus on which $v$ is bounded. Truncating on this annulus ensures that $f$ is subcritical and disappears upon `zooming in'. In this paper, we expend additional effort to control the constant in Bogovskii's operator with curved boundary. The details are contained in Proposition~\ref{pro:truncation}. We expect this procedure to be useful in localising many other regularity criteria, e.g., the vorticity alignment criterion in~\cite{gigahsumaekawaplaner}. This was recently explored in~\cite{barker2019scale} by Prange and the second author, which appeared after the present work.\footnote{See also~\cite[Remark~12.3]{taolocalizationcompactness} and~\cite{kukavicarusinzianeJMFM2017} for other applications.} However, the truncation does not appear to work well in higher dimensions unless one assumes additional conditions guaranteeing $\mathcal{H}^1(S) = 0$.

Once the solution has been truncated around the singularity, we follow the general scheme in~\cite{sereginl3,barkerser16blowup}. That is, upon rescaling, we obtain a sequence of solutions on a growing sequence of domains $\Omega_k \times ]0,1[$. These solutions are controlled in the energy space using a Calder{\'o}n-type splitting~\cite{calderon90,Jiasver2013,albrittonblowupcriteria}. 
If the singularity is on the boundary, we flatten the boundary and pass to a singular `blow-up limit' in $\R^3_+$ that vanishes identically at $t=1$. Backward uniqueness arguments from~\cite{barkerser16blowup} give the final contradiction. \\

In the second half, we are concerned with \emph{mild bounded ancient solutions}, which arise naturally as `blow-up limits' of singular Navier-Stokes solutions. By definition, these solutions are bounded (in fact, smooth) and satisfy the integral formulation of the Navier-Stokes equations on $\R^3$ or $\R^3_+$ for all backward times, which excludes certain `parasitic solutions' driven by the pressure. The \emph{Liouville conjecture} formulated by G. Koch, Seregin, {\v S}ver{\'a}k, and Nadirashvili in~\cite{kochnadirashvili} is the following:
\begin{center}
\textbf{Conjecture}. Mild bounded ancient solutions in $\R^3$ are constants.
\end{center} 
While the conjecture is far from settled, it is known to hold in special circumstances, e.g., in dimension two, in the axisymmetric setting without swirl~\cite{kochnadirashvili}, and in the periodic-in-$z$ setting~\cite{carrillo2018decay,lei2019ancient}. Even more is unknown regarding mild bounded ancient solutions in the half-space~\cite{sereginsverakrescalinghalfspace,barkersereginmbas,sereginliouvillehalfspacel2infty} (see~\cite{sereginshilkinliouvillesurvey} for a recent survey).

In recent work~\cite{albrittonbarkerlocalregI}, we demonstrated that Type~I singularity formation for suitable weak solutions is \emph{equivalent} to the existence of a non-trivial (that is, not identically zero) mild bounded ancient solution in $\R^3$ satisfying a Type~I decay condition. In this paper, we focus on the forward direction without the Type~I assumption. Specifically, we demonstrate that `zooming in' on a singular boundary suitable weak solution generates a non-trivial mild bounded ancient solution under quite general assumptions. 

\begin{theorem}[Existence of mild bounded ancient solutions]
\label{thm:mbas}
Let $\Omega \subset \R^3$ be a bounded $C^3$ domain and $\Gamma \subset \p\Omega$ a relatively open subset of the boundary. Let $T > 0$.

Let $v$ be a boundary suitable weak solution in $Q_T = \Omega \times ]0,T[$ vanishing on $\Gamma$.
	\begin{itemize}
	\item If $v$ has an interior singularity, then there exists a non-trivial mild bounded ancient solution in $\R^3$ that arises as a blow-up limit of $v$.
	\item If $v$ has a boundary singularity on $\Gamma$, then there exists a non-trivial mild bounded ancient solution in $\R^3$ or $\R^3_+$ that arises as a blow-up limit of $v$.
	\end{itemize}
\end{theorem}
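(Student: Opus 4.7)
The plan is to localise near $z^* = (x^*,T^*)$ via Proposition~\ref{pro:truncation}, rescale through a Hamilton-style point-picking procedure, and pass to a limit using the $L_\infty$ perturbation theory of Abe and Giga. Fix a small radius $R>0$ so that $\Omega_{x^*,R}$ either lies in the interior of $\Omega$ (interior case) or is contained in a single boundary chart on which $\Gamma$ may be flattened (boundary case). Applying Proposition~\ref{pro:truncation} produces a modified velocity $\tilde v = \Phi v + w$ and pressure $\tilde p$ that satisfy Navier--Stokes on the localised domain with zero Dirichlet data, a forcing $f$ supported on the annulus where the cut-off $\Phi$ varies, and a divergence-corrector $w$; both $f$ and $w$ are subcritical and $z^*$ remains a singular point of $\tilde v$.

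Since $\tilde v \notin L_\infty$ in any parabolic neighbourhood of $z^*$, a standard point-picking argument furnishes sequences $z_k = (x_k,t_k)\to z^*$ and $M_k := |\tilde v(z_k)| \to \infty$ together with a parabolic cylinder of radius $c/M_k$ around $z_k$ on which $|\tilde v|\le 2M_k$. Setting $\lambda_k = M_k^{-1}$ and rescaling
\[
v_k(y,s) = \lambda_k\, \tilde v\bigl(x_k + \lambda_k y,\; t_k + \lambda_k^2 s\bigr)
\]
(and analogously $p_k, f_k, w_k$) yields a perturbed Navier--Stokes system on growing domains $\Omega_k \times (S_k,0)$ with $S_k \to -\infty$, satisfying $|v_k(0,0)|=1$ and $\|v_k\|_{L_\infty(B(0,c)\times(-c^2,0))}\le 2$, while the rescaled forcing and corrector have scale-invariant norms tending to zero. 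In the boundary case, passing to a subsequence according to whether $d(x_k,\Gamma)/\lambda_k$ stays bounded, the domains $\Omega_k$ exhaust either $\R^3_+$ or $\R^3$; in the interior case the boundary escapes to infinity and $\Omega_k \uparrow \R^3$.

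The decisive step is to upgrade the local bound above to uniform $L_\infty$ bounds on parabolic cylinders $B(0,\rho)\times(-T,0)$ of arbitrary fixed size, for all $k$ large. To this end, write $v_k$ via its mild (Duhamel) formulation and run a Kato--Picard-type perturbation argument on successive time slabs, built on the Abe--Giga $L_\infty$ linear estimates for the Stokes semigroup on $\R^3_+$ (and classical heat/Stokes estimates on $\R^3$). A continuation argument propagating backwards from a time slice near $s=0$, combined with the vanishing of the rescaled perturbative terms, then yields the required uniform bound. Standard interior and boundary parabolic regularity provide uniform H{\"o}lder estimates on $v_k$ and its derivatives, so a subsequence converges locally uniformly to a bounded limit $u$ on $\R^3\times(-\infty,0]$ or $\R^3_+\times(-\infty,0]$; the forcing and corrector disappear in the limit, so $u$ is a smooth solution of Navier--Stokes, mild after passing Duhamel to the limit, and non-trivial since $|u(0,0)|=1$.

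I expect the main obstacle to be the backwards $L_\infty$ extension just described: without a Type~I hypothesis there is no a priori relation between $\lambda_k$ and $\sqrt{T^*-t_k}$, so the length of the time interval on which uniform $L_\infty$ bounds survive must be extracted solely from the smallness of the perturbation together with the sharp fixed-point structure of the Abe--Giga theory, rather than from any scaling identity. This is precisely why the finer $L_\infty$ linear estimates of Abe and Giga, rather than cruder Stokes semigroup bounds, are indispensable in the half-space setting.
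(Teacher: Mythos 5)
Your outline tracks the paper's architecture at a high level (truncate via Proposition~\ref{pro:truncation}, rescale, use Abe--Giga $L_\infty$ theory, pass to a limit), but it contains a decisive gap in the point-picking step that your proposed fix does not close, plus a second gap concerning the pressure.

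\paragraph{Point-picking.} You choose $z_k$ by a Hamilton-style argument that only delivers $|\tilde v|\le 2M_k$ on a parabolic cylinder of radius $c/M_k$ around $z_k$; after rescaling this gives $|v_k|\le 2$ only on $B(0,c)\times(-c^2,0)$. You then try to extend this bound to $B(0,\rho)\times(-T,0)$ for arbitrary $T$ by ``a continuation argument propagating backwards from a time slice near $s=0$.'' This cannot work: the Navier--Stokes system is forward parabolic, and a Kato--Picard iteration on a time slab propagates $L_\infty$ control \emph{forward}, not backward. Knowing $v_k$ is bounded by $2$ on $(-c^2,0)$ gives no handle whatsoever on $\|v_k(\cdot,s)\|_{L_\infty}$ for $s<-c^2$, and the smallness of $f_k$ and $w_k$ is of no help for this. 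You identify this as ``the main obstacle'' but the fix you sketch does not exist. The paper sidesteps the problem entirely by a \emph{different} selection: it sets $g(t)=\esssup_{-1<t'<t}\|v(\cdot,t')\|_{L_\infty(\Omega)}$, which is monotone nondecreasing, and picks $(x_n,t_n)$ so that $M_n=g(t_n)=|v(z_n)|$. Then, for $s<0$, monotonicity of $g$ gives $\|v(\cdot,t_n+R_n^2 s)\|_{L_\infty}\le g(t_n)=M_n$, so that $|v_n|\le 1$ on \emph{all} of $\Omega_n\times(-M_n^2,0)$ at the outset. That global backward bound is what feeds the slab-by-slab decomposition $v_n=u_n+w_n$ (Lemma~\ref{lem:Holdercontdecomp}) and the uniform H{\"o}lder estimates of Proposition~\ref{pro:Holderestimates}; there is nothing to propagate backward.

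\paragraph{Pressure and mildness.} You conclude ``mild after passing Duhamel to the limit,'' but the Duhamel formula on $\Omega_k$ involves the Stokes semigroup on $\Omega_k$, which does not pass naively to the Stokes semigroup on $\R^3$ or $\R^3_+$. The reason the paper devotes Proposition~\ref{pro:pressureestimates} and Lemma~\ref{lem:subpressurelem} (via the Abe--Giga weighted estimates for the Neumann problem in Appendix~\ref{sec:neumannprob}) to decomposing $\nabla p_n$ into a BMO piece $p_{u\otimes u}$ and weighted-decaying harmonic pieces $p_{\rm h}^1,p_{\rm h}^2$ is exactly to control the harmonic part of the limiting pressure and show $\nabla P_{\rm h}\to 0$ as $x_3\to\infty$. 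That decay is what rules out parasitic solutions, and the characterization of mild bounded ancient solutions in terms of pressure structure (from~\cite{barkersereginmbas}) is then invoked. Your proposal gives no argument for ruling out parasitic pressures; this needs to be supplied.

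A smaller point: you write the rescaled forcing and corrector ``have scale-invariant norms tending to zero.'' They are subcritical, not scale-invariant, which is precisely why they tend to zero; this is correct in spirit but the phrasing is off, and you should record that $w$ is absorbed into $\tilde v=\Phi v + w$ at the truncation stage so that only $f$ survives into the rescaling.
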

That is, the mild bounded ancient solution in question is the solution obtained from the rescaling procedure in the proof of Theorem~\ref{thm:mbas}. We hope that this result and those in~\cite{albrittonbarkerlocalregI} help clarify the role of mild bounded ancient solutions in the regularity theory of the Navier-Stokes equations.

The interior case of Theorem~\ref{thm:mbas} was known, see~\cite{sereginsverakaxisymmetric} and~\cite{sereginsverakhandbook}. Regarding boundaries, Seregin and {\v S}ver{\'a}k demonstrated in~\cite{sereginsverakrescalinghalfspace} that half-space singular solutions generate mild bounded ancient solutions in $\R^3$ or $\R^3_+$. Note that, \emph{a priori}, either may occur when the domain is $\R^3_+$, depending on the rate at which the velocity growns near the boundary. The analysis in~\cite{sereginsverakrescalinghalfspace} relies on the explicit kernel representation due to Solonnikov~\cite{Sol1977} for mild solutions in the half-space. 
The most difficult part of their analysis is to obtain decay estimates for $\nabla p$ as $x_3 \to \infty$ in order to rule out `parasitic solutions' and conclude that the blow-up limit is indeed mild.

Since kernel estimates are unavailable (or unwieldy) for more general domains, we rely on a perhaps more conceptual approach, based on tools developed by K. Abe and Y. Giga in~\cite{AbeGiga,abestokesflow,AbeNSEBounded}. Once we have truncated the solution, the rescaling procedure in~\cite{sereginsverakrescalinghalfspace} gives a sequence of solutions $v^{(k)}$ ($|v^{(k)}| \leq 1$) on a growing sequence of domains $\Omega_k$ expanding to $\R^3$ or $\R^3_+$. It is vital to only use estimates which do not degenerate as the domains grow. Therefore, it is natural to apply the perturbation theory for $L_\infty$ mild solutions in bounded domains in Abe's paper~\cite{AbeNSEBounded}. We also control a correction coming from the non-zero forcing $f^{(k)}$. This yields $C^\alpha_\para$ estimates for $v^{(k)}$ and enough compactness to show that the blow-up limit is non-trivial. To complete the proof, we use scaling-invariant pressure estimates weighted by the distance to the boundary, as in~\cite{AbeGiga,abestokesflow}, that are similar to (but not exactly)
\begin{equation}
	t^{\frac{1}{2}} \sup_{x \in \Omega} {\rm dist}(x,\p\Omega) |\nabla \pi(\cdot,t)| \leq C(\Omega) \norm{u_0}_{L_\infty(\Omega)}.
\end{equation}
 These estimates are used to show that $\nabla p \to 0$ as $x_3 \to \infty$ for the blow-up limit, thereby ruling out parasitic solutions. 

 We expect that Theorem~\ref{thm:mbas} remains valid when $\Omega$ is only assumed to be a bounded $C^2$ domain (see Remark~\ref{rmk:regularityremark}). Throughout the paper, it is only essential that $\Gamma \subset \p\Omega$ is sufficiently regular, since the domain $\Omega$ may be modified.

 After the present work appeared on the arXiv, subsequently and independently, Giga \emph{et al}.~\cite{giga2019continuous} used a `zooming in' procedure  in bounded domains like the one in Section~\ref{sec:mbas} (although without truncation procedure) to obtain a vorticity alignment criterion. See also work by the second author and C. Prange~\cite{barker2019scale}. \\

For the reader's convenience and to make the paper more self-contained, we include an appendix. Appendix~\ref{sec:persistenceofsing} discusses boundary suitable weak solutions of the \emph{flattened} NSE (utilized in~\cite{sereginshilkinsolonnikovboundarypartialreg2014,mikhailovshilkincurvedboundary2006}) and proves the `persistence of singularities' (Proposition~\ref{stabilitysingularpointshalfspace}) for zooming in on a singularity against a curved portion of boundary. This result is new for curved boundaries, although it follows from known techniques in~\cite{sereginl3inftyflatboundary,Barkerthesis}. Appendix~\ref{sec:parabolicsobolev} recalls a parabolic Sobolev embedding theorem into H{\"o}lder spaces (in particular, a scaling-invariant version in the case $u|_{\p'Q_T} = 0$). Finally, Appendix~\ref{sec:neumannprob} collects known \emph{a priori} estimates, weighted by the distance to the boundary, for harmonic functions with divergence-form Neumann data. 

\subsubsection*{Notation}

For $x = (x',x_3) \in \R^{2+1}$, $t \in \R$, $z=(x,t)$, and $R>0$, we define
\begin{equation}
	B(x,R) = \{ y \in \R^3 : |x-y| < R \},
\end{equation}
\begin{equation}
	Q(z,R) = B(x,R) \times ]t-R^2,r[,
\end{equation}
\begin{equation}
	K(x',R) = \{ y' \in \R^2 : |x'-y'| < R \}.
\end{equation}
We denote $B(R) = B(0,R)$, $B = B(1)$, and similarly for $Q$ and $K$. Also, $B^+(R) = B(R) \cap \{ x_3 > 0 \}$, $B^+ = B^+(1)$, $Q^+(R) = B^+(R) \times ]-R^2,0[$, and $Q^+ = Q^+(1)$.

If $\Omega \subset \R^3$ is open and $I \subset \R$ is an interval, we define $Q_I = \Omega \times I$.

Let $1 \leq p,q \leq \infty$ and $m,n \in \N_0$. We will use the Lebesgue spaces $L_{p,q}(Q_I)$, where $p$ represents spatial integrability and $q$ time integrability,
as well as Sobolev spaces $W^{m,n}_{p,q}(Q_I)$,
where $m$ represents differentiability in space and $n$ differentiability in time. In the literature, notation of the type $L^q_t L^p_x(Q_I)$, $L^2_t H^1_x(Q_I)$, or $L_q(I;L_p(\Omega))$, $L_2(I;H^1(\Omega))$, etc., are also common, and we may occasionally use them. If a function space appears without a domain, e.g., $L_3$, then the domain is taken to be $\R^3$. We typically do not change our notation to reflect whether function spaces consist of scalar-, vector-, and matrix-valued functions.

Finally, we will not change notation when passing to subsequences.

\section{Truncation procedure}

\label{sec:truncation}





To begin, we give a definition of \emph{boundary suitable weak solution} (cf.~\cite{sereginshilkinsolonnikovboundarypartialreg2014,mikhailovshilkincurvedboundary2006,sereginshilkinsurvey}). Let $\Omega \subset \R^3$ be a (possibly unbounded) $C^2$ domain and $\Gamma \subset \p\Omega$ be relatively open. Let $I = ]S,T[$ be a finite open interval.
\begin{definition}[Boundary suitable weak solution]
\label{boundarysuitableweaksoldef}
We say that $(v,q)$ is a \emph{boundary suitable weak solution} of the Navier-Stokes equations in $Q_I = \Omega \times I$ vanishing on $\Gamma$ if
\begin{enumerate}
	\item for all bounded subdomains $\Omega' \subset \Omega$ with $\overline{\Omega'} \subset \Omega \subset \Gamma$ and all $S < S' < T$,
\begin{equation}
	\label{eq:vqrequirement}
	v \in L_{2,\infty}\cap W^{1,0}_2 (\Omega' \times ]S',T[), \quad q \in L_{\frac{3}{2}}(\Omega' \times ]S',T[),
\end{equation}
 and $v(\cdot,t)\big|_{\Gamma} = 0$ in the sense of trace for almost every $t \in I$,
\item
$(v,q)$ solves the Navier-Stokes equations on $Q_I$ in the sense of distributions:
\begin{equation}
\left\lbrace
\begin{aligned}
	\p_t v - \Delta v + v \cdot \nabla v + \nabla q &= 0 & &\text{ in } Q_I\\
	\div v &= 0 & &\text{ in } Q_I,
	\end{aligned}
	\right.
\end{equation}
\item
and $(v,q)$ satisfies the local energy inequality:
\begin{equation}
	\int_{\Omega} \zeta |v(x,t)|^2 \, dx  + 2 \int_0^t \int_\Omega \zeta |\nabla v|^2 \, dx \, dt' \leq $$ $$ \leq \int_0^t \int_{\Omega} |v|^2 (\p_t + \Delta) \zeta + (|v|^2 + 2q) v \cdot \nabla \zeta \, dx \,dt'
\end{equation}
for all non-negative $\zeta \in C^\infty_0((\Omega \cup \Gamma) \times ]S,T])$ and almost every $t \in I$.\footnote{Since $v \in C_w([0,T];L^2(\Omega))$, the local energy inequality is actually satisfied for every $t \in I$.}
\end{enumerate}
\end{definition}

One may use the local boundary regularity for the Stokes system in~\cite{shilkinvialov2013} (cf.~\cite{sereginnotelocalboundaryreg2009} for flat boundaries) and~\eqref{eq:vqrequirement} to bootstrap and obtain that each boundary suitable weak solution satisfies
\begin{equation}
	\label{eq:vqadditionalreg}
	(v,q) \in W^{2,1}_{\frac{9}{8},\frac{3}{2}} \times W^{1,0}_{\frac{9}{8},\frac{3}{2}}(\Omega' \times ]S',T[)
\end{equation}
for all $\Omega'$ and $S'$ as above. Definition~\ref{boundarysuitableweaksoldef} differs slightly from previous definitions in two ways: i)~We only require~\eqref{eq:vqrequirement} in $\Omega' \times ]S',T[$ rather than in the whole $Q_I$, and ii) Equation~\ref{eq:vqadditionalreg} is obtained as a consequence rather than directly imposed. Our definition works well for $\Omega=\R^3_+$ and solutions with infinite energy, as we encounter in the proof of Theorem~\ref{thm:l3}.

We now present the localisation procedure.
\begin{pro}[Truncation procedure]
\label{pro:truncation}
Let $\Omega \subset \R^3$ be a bounded $C^2$ domain, $x_0 \in \Omega$, and $R_0 > 0$ such that $\overline{\Omega_{x_0,R_0}} \subset \Omega \cup \Gamma$. Let $v$ be a boundary suitable weak solution on $\Omega \times ]0,T[$ ($T > 0$).

There exist $0 \leq \delta_1 < T$, $\Phi \in C^\infty(\R^3)$ ($0 \leq \Phi \leq 1$), and vector fields $w$ and $f$ such that the following hold:
\begin{enumerate}[1.]
	\item The vector fields $w$ and $f$ satisfy
\begin{equation}
	\label{wespaces}
	w \in W^{2,1}_{p,\frac{3}{2}} \cap L_\infty \cap W^{1,0}_{2}(\Omega \times ]\delta_1,T[) \text{ for all } p \geq 1,
\end{equation}
\begin{equation}
	\label{fespaces}
	f \in L_{p,\frac{3}{2}}(\Omega \times ]\delta_1,T[) \text{ for all } p \geq 1.
\end{equation}
\item $(V,Q)$ defined by 
\begin{equation}
	(V,Q) := (\Phi v + w, \Phi q)
\end{equation}
solves the Navier-Stokes equations with forcing term $f$ in the sense of distributions on $\Omega \times ]\delta_1,T[$.
\item There exists $0 < \bar{R}_0 \leq R_0$ such that
\begin{equation}
	(V,Q) \equiv (v,q) \text{ on } \Omega_{x_0,\bar{R}_0} \times ]\delta_1,T[.
\end{equation}
\item Suppose $v \in L_\infty(\Omega \times ]\delta_1,S[)$ for some $S \in ]\delta_1,T]$. Then $V \in L_\infty(\Omega \times ]\delta_1,S[)$. Moreover, $V$ is the unique weak Leray-Hopf solution on $\Omega \times ]\delta_1,S[$ with initial data $V(\cdot,\delta_1)$ and forcing term $f$.
\item Suppose $\norm{v(\cdot,t)}_{L_3^\weak(\Omega_{x_0,R_0})} \leq M$ for some $t \in ]\delta_1,T[$. Then $\norm{V(\cdot,t)}_{L_3^\weak(\Omega)} \leq c_0M$, where $L_3^\weak$ is the Lorentz space $L^{3,\infty}$ and $c_0 > 0$ is an absolute constant.
\end{enumerate}
\end{pro}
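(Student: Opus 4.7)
The plan is to truncate $v$ by a smooth cutoff $\Phi$ supported in a ball around $x_0$ and restore the divergence-free condition with a Bogovskii-type corrector $w$, following the scheme of Neustupa and Penel~\cite{neustupapenel1999}. The central idea is to locate the transition region $\{0 < \Phi < 1\}$ inside a parabolic annulus on which $v$ is already known to be smooth; this guarantees that the commutator forcing $f$ is subcritical and will vanish in the rescaled limit.

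First, I invoke boundary partial regularity to conclude that the singular set $\Sigma$ of $v$ satisfies $\mathcal H^1_\para(\Sigma)=0$. Projecting to space, $\pi_x(\Sigma)\subset \overline\Omega$ has $\mathcal H^1$-measure zero, so for almost every $r\in(0,R_0)$ the spatial sphere $\{|x-x_0|=r\}\cap\overline\Omega$ avoids $\pi_x(\Sigma)$. Combined with a compactness argument in time, this produces $\delta_1\in[0,T)$ and radii $0<\bar R_0<r_1<r_2<R_0$ such that $v$ is smooth and $L_\infty$-bounded on the parabolic annular shell
\[
A := \bigl(\{r_1\leq |x-x_0|\leq r_2\}\cap\overline\Omega\bigr)\times[\delta_1,T].
\]
Fix $\Phi\in C^\infty_c(\R^3)$ with $\Phi\equiv 1$ on $B(x_0,r_1)$, support in $B(x_0,r_2)$, and values in $[0,1]$, and define $w(\cdot,t)$ as the Bogovskii solution of $\div w=-\nabla\Phi\cdot v(\cdot,t)$ on $\Omega$ with $w|_{\p\Omega}=0$; the compatibility $\int_\Omega \nabla\Phi\cdot v=0$ follows from $\div v=0$ together with $v|_\Gamma=0$ and $\Phi\equiv 0$ on $\p\Omega\setminus\Gamma$. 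Set $(V,Q):=(\Phi v+w,\Phi q)$, which is divergence-free and coincides with $(v,q)$ on $\Omega_{x_0,\bar R_0}$.

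A direct expansion using the NSE for $v$ shows that $(V,Q)$ satisfies the Navier-Stokes equations with forcing
\[
f = (\p_t-\Delta)w - 2\nabla\Phi\cdot\nabla v - (\Delta\Phi)v + q\nabla\Phi + (\Phi^2-\Phi)\,v\cdot\nabla v + \Phi(v\cdot\nabla\Phi)\,v + (\text{quadratic corrections involving }w).
\]
Every term is supported either in $A$ (where $v$ and $q$ are smooth) or in $\mathrm{supp}\,w$, so the declared regularity of $w$ and $f$ follows from the mapping properties of Bogovskii on a bounded $C^2$ domain combined with the bootstrapped estimate~\eqref{eq:vqadditionalreg} for $q$ on $A$. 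This establishes items (1)-(3); item (4) is standard uniqueness of Leray-Hopf solutions with $L_\infty$ initial data and smooth forcing.

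The main obstacle is item (5): controlling $\norm{w(\cdot,t)}_{L_3^\weak(\Omega)}$ by $\norm{v(\cdot,t)}_{L_3^\weak(\Omega_{x_0,R_0})}$ with an \emph{absolute} constant $c_0$. Since only a single-time $L_3^\weak$ bound on $v$ is assumed, there is no smoothness buffer to absorb geometric constants, so one must bound Bogovskii on the Lorentz space $L_{3,\infty}$ uniformly in the local geometry. My plan is to flatten the portion of $\Gamma$ inside $B(x_0,R_0)$ by a $C^2$ change of variables, rescale $B(x_0,R_0)$ to unit scale using the Navier-Stokes scaling $v(x)\mapsto \lambda v(\lambda x)$ (under which $L_3^\weak$ is invariant), and apply Bogovskii's explicit integral representation on the unit ball or half-ball, whose Calder\'on-Zygmund structure yields $L_{3,\infty}$ boundedness by real interpolation. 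The factor of $R_0^{-1}$ introduced by $\norm{\nabla\Phi}_\infty$ cancels against the natural scaling of the Bogovskii constant, giving an absolute $c_0$ depending only on the scale-invariant $C^2$ norm of $\Gamma$.
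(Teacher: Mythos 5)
Your overall architecture matches the paper's: truncate by a smooth $\Phi$, correct the divergence with Bogovskii, and use partial regularity to place the transition region inside a parabolic shell where $v$ is already bounded (this is the paper's Lemma~\ref{lem:regannlem}). Items (1)--(4) are essentially on track. The genuine gap is in item~(5), where you must produce an \emph{absolute} constant $c_0$ in the $L_3^\weak$ estimate, and this is precisely where your proposal diverges from a correct argument.

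You define $w$ by Bogovskii on the whole domain $\Omega$ and then propose to control the $L_{3,\infty}$ operator norm by flattening $\Gamma$, rescaling to unit scale, and ``applying Bogovskii on the unit ball or half-ball.'' There are two obstructions here. First, flattening the boundary via $x\mapsto (x',x_3-\varphi(x'))$ transforms the divergence operator into the distorted operator $\hat\nabla_\varphi\cdot$, so the Bogovskii solution on $\Omega$ does \emph{not} pull back to the Bogovskii solution on a flattened domain; you would need a perturbation argument that you do not supply. Second, even after flattening and rescaling, the relevant domain is a perturbed half-ball, and the Bogovskii constant for a general bounded Lipschitz domain depends on the domain via a Lipschitz decomposition into star-shaped pieces --- it is not manifestly absolute. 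Your spherical annulus $\{r_1\le|x-x_0|\le r_2\}\cap\Omega$ makes matters worse: this intersection is not star-shaped, and one cannot read an explicit star-shape parameter off it.

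The paper circumvents all of this by construction rather than by transformation. After an initial rescaling that makes $\|\varphi\|_{C^2}\le N^*$ small, it replaces your spherical cutoff with a \emph{cone-shaped} one: $\Phi\equiv 1$ near $E(r_1)$ and $\Phi\equiv 0$ outside $E(r_2)$, where $E(r)$ is the downward cone with vertex $re_3$. The payoff (Lemma~\ref{lem:starlikelemma}) is that $\cO(\varphi,r_1) = \bigl(]-1,1[^3\setminus E(r_1)\bigr)\cap\{x_3>\varphi(x')\}$ is star-shaped with respect to the fixed ball $B(3e_3/4,1/16)$, uniformly in $\varphi$ satisfying~\eqref{phisatisfiesnstar}, giving the star-shape parameter $A=64$. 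Bogovskii is then solved on $\cO$ (not on $\Omega$), and the Lorentz-space divergence-form bound of Remark~\ref{rmk:bogweaklp}, whose constant depends only on $A$, delivers $\|w(\cdot,t)\|_{L_3^\weak}\le C(64)M$. Without this star-shaped construction, or a replacement for it that pins down the Bogovskii constant in a domain-independent way, your item~(5) is not established.
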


 To control the $L_3^\weak$ norm of the truncated solutions in a uniform way, we need to control the constants in Bogovskii's operator. We summarize the necessary facts here:
\begin{lemma}[Bogovskii's operator]
\label{lem:bogovskii}
	Let $d \geq 2$ and $\Omega \subset \R^d$ be a bounded domain star-shaped with respect to a ball $B(x,R)$ compactly contained in $\Omega$.\footnote{That is, for each $y_1 \in B(x,R)$ and $y_2 \in \Omega$, the closed line segment connecting $y_1$ and $y_2$ lies within $\Omega$.} Suppose $A \geq {\rm diam}(\Omega)/R$.

	There exists a linear operator $B \: C^\infty_{0,\avg}(\Omega) \to C^\infty_0(\Omega)$
	satisfying (denote $w = Bg$) the equation
	\begin{equation}
	\label{eq:divprob}
	\left\lbrace
	\begin{aligned}
	\div w &= g & &\text{ in } \Omega \\
	w \big|_{\p \Omega} &= 0 & &\text{ on } \p\Omega.
	\end{aligned}
	\right.
	\end{equation}
	Here and in the sequel, $\avg$ denotes zero spatial average.

	Let $k \in \N_0$ and $1 < p < \infty$. Then, for all $g \in C^\infty_{0,\avg}(\Omega)$,
	\begin{equation}
	\label{timeindepbogest}
	\norm{\nabla w}_{W^{k}_{p}(\Omega)} \leq C(d,k,p,A) \norm{g}_{W^{k}_{p}(\Omega)}.
	\end{equation}
	with positive constant $C$ independent of $g$. Hence, $B$ extends uniquely to a bounded linear operator
	$B \: \mathring{W}^{k}_{p,\avg}(\Omega) \to \mathring{W}^{k+1}_{p}(\Omega)$
	solving~\eqref{eq:divprob}, where $\mathring{}$ denotes the closure of test functions.

	Let $I \subset \R$ be an open interval and $g \in L_1(I;L_{p,\avg}(\Omega))$.
	Consider the linear operator $B$ defined by applying the above operator at almost every time.
	If $\p_t g \in L_1(I;L_p(\Omega))$, then $B$ commutes with the time derivative:
	\begin{equation}
		\label{Bcommutewithpt}
		\p_t B(g) = B(\p_t g).
	\end{equation}
\end{lemma}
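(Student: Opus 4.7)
The plan is to carry out Bogovskii's classical construction and then track the geometric dependence of the constants. Fix a test function $\omega \in C^\infty_0(B(x,R))$ with $\int \omega = 1$, and for $g \in C^\infty_{0,\avg}(\Omega)$ define
\begin{equation*}
	w(y) := \int_\Omega G(y,z)\, g(z)\, dz, \qquad G(y,z) := (y-z) \int_1^\infty \omega\bigl(z + s(y-z)\bigr) s^{d-1}\, ds.
\end{equation*}
The hypothesis that $\Omega$ is star-shaped with respect to $B(x,R)$ guarantees that for each $z \in \Omega$ the integrand in $G(\cdot, z)$ vanishes unless $y \in \Omega$, so $w \in C^\infty_0(\Omega)$; a direct computation exploiting $\int g = 0$ gives $\div w = g$.

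For the $L_p$ estimate~\eqref{timeindepbogest} in the base case $k=0$, I would decompose $\nabla w = T_\omega g + L_\omega g$, where $T_\omega$ is a Calder{\'o}n-Zygmund singular integral operator with kernel of order $|y-z|^{-d}$ and $L_\omega$ is a bounded integral operator with smooth, compactly supported kernel. Both kernels inherit their bounds from derivatives of $\omega$ and from the size of $\dist$-type quantities in $\Omega$ relative to $R$. After rescaling so that $R = 1$ (and hence $\mathrm{diam}(\Omega) \leq A$), the kernel bounds and H{\"o}rmander-type cancellation condition depend only on $d$, $A$, and the fixed dimensionless profile of $\omega$. Calder{\'o}n-Zygmund theory then yields $\norm{\nabla w}_{L_p(\Omega)} \leq C(d,p,A)\norm{g}_{L_p(\Omega)}$ for $1 < p < \infty$. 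The higher-order bound $k \geq 1$ follows by induction: integrating by parts in $z$ transfers each derivative from $w$ onto $g$, up to remainder terms controllable on $L_p$ by the same singular integral argument. The extension of $B$ to $\mathring{W}^{k}_{p,\avg}(\Omega)$ is routine by density.

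For the commutation identity~\eqref{Bcommutewithpt}, the crucial point is that $G(y,z)$ does not depend on time. For $g$ smooth in $t$, differentiation under the integral sign gives $\p_t B(g) = B(\p_t g)$ immediately. For the general hypothesis $g \in L_1(I;L_{p,\avg}(\Omega))$ with $\p_t g \in L_1(I;L_p(\Omega))$, I would mollify in the time variable, apply the identity to the mollifications, and pass to the limit using linearity and the bound~\eqref{timeindepbogest}.

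The principal technical hurdle is verifying that the Calder{\'o}n-Zygmund constants depend only on $A = \mathrm{diam}(\Omega)/R$ and not on finer geometric features of $\Omega$. This amounts to a careful rescaling and a quantitative tracking of the kernel estimates in the original Bogovskii argument (also detailed in Galdi's monograph); once this is in hand, the rest of the proof is essentially bookkeeping.
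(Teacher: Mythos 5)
Your proposal is essentially a sketch of the Bogovskii--Galdi argument that the paper simply cites for the time-independent assertions (\cite{bogovskii} and \cite[Lemma~III.3.1~\&~Remark~III.3.2]{galdi}), so you are in agreement with the paper on the main content. The one place where your route genuinely diverges is the commutation identity~\eqref{Bcommutewithpt}: the paper suggests applying the finite-difference operator $D^h_t$ in time and letting $h\to 0^+$, while you mollify in $t$ and pass to the limit. Both are valid and essentially interchangeable; the finite-difference route is marginally more direct here because linearity gives $D^h_t B(g)=B(D^h_t g)$ exactly (no approximation error to control), whereas mollification requires one extra commutation step between $B$ and the convolution, still justified by~\eqref{timeindepbogest} and Fubini. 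Two small remarks on your time-independent sketch: the support claim $w\in C^\infty_0(\Omega)$ uses not only that $y$ lies in $\Omega$ whenever the integrand is nonzero, but also that the resulting set is compactly contained in $\Omega$, which follows because the star of $\mathrm{supp}(g)\cup B(x,R)$ is compactly contained in $\Omega$ when $\mathrm{supp}(g)$ is; and for $k\ge 1$ the precise mechanism in Galdi is a commutator identity expressing $\partial_j(Bg)$ as a Bogovskii-type operator applied to $\partial_j g$ plus a zero-order remainder, rather than a bare integration by parts, though this is the same idea phrased more carefully.
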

For the time-independent assertions, see~\cite{bogovskii} and \cite[Lemma III.3.1 \& Remark III.3.2]{galdi}. Similar results are true in the bounded Lipschitz setting. To prove~\eqref{Bcommutewithpt}, one may use the finite difference operator $D^h_t \varphi = \left( \varphi + \varphi(\cdot+h) \right)/h$ and $h \to 0^+$.

\begin{remark}[Bogovskii in $L_p^\weak$ in divergence form]
\label{rmk:bogweaklp}
Consider a bounded Lipschitz domain $\Omega$ which is also star-shaped with respect to a ball. Let $A>0$ be as in Lemma~\ref{lem:bogovskii}. 
For $1 < p < \infty$, suppose
\begin{equation}
	g \in W^{1,p}_{\div}(\Omega) := \left\lbrace g \in L_p(\Omega) : \div g \in L_p(\Omega) \right\rbrace
\end{equation}
with $g \cdot n|_{\p \Omega} = 0$. Hence, $\int_{\Omega} \div g \, dx = 0$. In this case, we have the divergence-form estimate (cf. Proposition 2.1 in~\cite{Hieber2016} or Theorem III.3.4 in~\cite{galdi})\footnote{Technically, the dependence on $A$ is not explicitly stated therein (their statements are for bounded Lipschitz domains), but it follows from the proof in~\cite{galdi}.}
\begin{equation}
	\label{eq:divformest}
	\norm{B(\div g)}_{L_p(\Omega)} \leq C(p,A) \norm{g}_{L_p(\Omega)}.
\end{equation}
Combining~\eqref{eq:divformest} with real interpolation, we may estimate the Bogovskii operator in Lorentz spaces:
\begin{equation}
	\label{eq:bogweaklp}
	\norm{B(\div g)}_{L_p^\weak(\Omega)} \leq C(p,A) \norm{g}_{L_p^\weak(\Omega)}.
\end{equation}
\end{remark}

 Cones are convenient for constructing star-shaped domains, so we let
\begin{equation}
	E(r) = \{ |x_3-r|^2 > |x'|^2 \text{ and }  x_3 < r \}
\end{equation}
denote the cone of angle $\pi/2$ and vertex $r e_3$ pointing in the $e_3$ direction.

The following lemma is elementary, and we omit the proof:

\begin{lemma}[A star-shaped domain]
\label{lem:starlikelemma}
There exists $0 < N^* \ll 1$ such that, if $\varphi \in C^2(\overline{K(2)})$ satisfies
\begin{equation}
	\label{phisatisfiesnstar}
	\varphi(0) = 0, \; (\nabla \varphi)(0) = 0, \text{ and } \norm{\varphi}_{C^2(K(2))} \leq N^*,
\end{equation}
then, for all $0 < r \leq 1/4$, the bounded domain
\begin{equation}
	\label{eq:odef}
	\cO(\varphi,r) := \left\lbrace ]-1,1[^3 \setminus E(r) \right\rbrace \cap \left\lbrace x_3 > \varphi(x') \right\rbrace
\end{equation}
is star-shaped with respect to the ball $B^* = B(3e_3/4 ,1/16)$ and Lipschitz.
\end{lemma}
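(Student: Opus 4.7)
The plan is to present $\cO(\varphi, r)$ as the epigraph of a Lipschitz function inside the unit cube, which handles the Lipschitz regularity, and then to verify star-shapedness with respect to $B^*$ by checking the three defining constraints (cube, cone, graph) separately along any segment $\gamma(t) = (1-t) y_1 + t y_2$ with $y_1 \in B^*$ and $y_2 \in \cO(\varphi, r)$. First, observe that
\begin{equation*}
\cO(\varphi, r) = \{ x \in \, ]-1,1[^3 : x_3 > \psi(x') \}, \qquad \psi(x') := \max\bigl(r - |x'|,\, \varphi(x')\bigr).
\end{equation*}
Since $r - |\cdot|$ is $1$-Lipschitz and, by Taylor expansion at $0$ using $\nabla \varphi(0) = 0$, one has $\norm{\nabla \varphi}_{L_\infty(K(2))} \leq 2 N^* \ll 1$, the function $\psi$ is $1$-Lipschitz. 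The boundary $\p \cO(\varphi, r)$ is then the graph of $\psi$ together with pieces of the flat cube faces, meeting at angles uniformly bounded away from $0$ and $\pi$; this yields the Lipschitz regularity.

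For the cube constraint, $\gamma(t) \in \, ]-1,1[^3$ is immediate from convexity. For the cone constraint $\gamma(t) \notin E(r)$, which amounts to $g(t) := \gamma_3(t) + |\gamma'(t)| > r$, I would note that $g$ is convex in $t$ (linear plus convex). At the endpoints, $g(0) = y_{1,3} + |y_1'| \geq 11/16 > r$ and $g(1) = y_{2,3} + |y_2'| > r$ (the latter since $y_2 \in \cO(\varphi, r) \subset E(r)^c$ open). If the minimum of $g$ on $[0,1]$ is attained at an interior point $t^*$ with $\gamma'(t^*) \neq 0$, a direct computation using $g'(t^*) = 0$ yields $g(t^*) = y_{1,3} + y_1' \cdot \hat{w}$ for some unit vector $\hat{w}$, whence
\begin{equation*}
g(t^*) \geq y_{1,3} - |y_1'| \geq \tfrac{11}{16} - \tfrac{1}{16} = \tfrac{5}{8} > r;
\end{equation*}
the degenerate case $\gamma'(t^*) = 0$ is handled separately by exploiting the same quantitative gap between $y_{1,3} - |y_1'|$ and $r$. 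In all cases $g > r$ on $[0, 1]$.

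For the graph constraint $\gamma_3(t) > \varphi(\gamma'(t))$, the key tool is that $\tilde{\varphi} := \varphi + \tfrac{N^*}{2} |x'|^2$ is convex (since $D^2 \varphi + N^* I \geq 0$), which yields the quantitative convexity estimate
\begin{equation*}
\varphi\bigl((1-t) y_1' + t y_2'\bigr) \leq (1-t)\varphi(y_1') + t\varphi(y_2') + \tfrac{N^*}{2}\, t(1-t)\, |y_1' - y_2'|^2.
\end{equation*}
Setting $H(t) := \gamma_3(t) - \varphi(\gamma'(t))$, this gives the lower bound $H(t) \geq (1-t) H(0) + t H(1) - \tfrac{N^*}{2}\, t(1-t)\, |y_1' - y_2'|^2$, where $H(0) \geq 11/16 - O(N^*)$, $H(1) > 0$, and $|y_1' - y_2'|^2 \leq 3$. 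Completing the square in $t$ shows the right-hand side is strictly positive on $[0, 1]$ provided $N^*$ is chosen small enough in an absolute sense. The main obstacle is merely bookkeeping: fixing a single universal $N^*$ that makes all three constraints go through simultaneously for every $y_1 \in B^*$ and every $y_2 \in \cO(\varphi, r)$. No deeper idea is required beyond convexity of $|\cdot|$ and a quadratic perturbation of a flat graph.
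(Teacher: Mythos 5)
The paper offers no proof of Lemma 2.4: it states ``The following lemma is elementary, and we omit the proof.'' So there is nothing in the text to compare against. Your proposal is a sensible way to supply the missing argument, and the core steps are correct: rewriting $\cO(\varphi,r)$ as the epigraph of the $1$-Lipschitz function $\psi = \max(r-|\cdot|,\varphi)$ inside the cube, checking the cube constraint by convexity, checking the cone constraint $\gamma_3 + |\gamma'| \geq r$ through the nice first-order identity $g(t^*) = y_{1,3} + y_1'\cdot\hat w$ at an interior critical point, and checking the graph constraint via the uniform convexity defect of $\varphi + \tfrac{N^*}{2}|x'|^2$.

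Two small corrections. First, your parenthetical ``$y_2 \in \cO \subset E(r)^c$ open'' is false: $E(r)$ is open (it is cut out by strict inequalities), so $E(r)^c$ is closed. The correct justification of $g(1) > r$ is the one you already set up at the start: from $\cO = \{x\in\,]-1,1[^3 : x_3 > \psi(x')\}$ one gets $y_{2,3} > \psi(y_2') \geq r - |y_2'|$. Second, your identification $\cO(\varphi,r) = \{x_3 > \psi(x')\}$ is not literally the set written in \eqref{eq:odef}: the set in \eqref{eq:odef} also contains the cone surface $\{x_3 = r - |x'|\} \cap \{x_3 > \varphi\}$, since removing the open set $E(r)$ leaves a closed condition $x_3 \geq r - |x'|$. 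This is really a defect in the paper's formula — since $\cO$ is called a ``bounded domain'' and used as the domain of definition in Bogovskii's lemma, the intended object is the open epigraph — but it is worth noting that if one takes \eqref{eq:odef} at face value, one should either pass to the interior or settle for proving $g(t) \geq r$ rather than $g(t) > r$; either reading makes the argument go through. With these adjustments the proof is complete, modulo the explicit but routine choice of $N^*$ at the end, which you correctly identify as bookkeeping.
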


With this in mind, we prove Proposition~\ref{pro:truncation}.

\begin{proof}[Proof of Proposition~\ref{pro:truncation}]
We present only the case $x_0 \in \p\Omega$.\footnote{For the case $x_0 \in \Omega$, one truncates in a small annulus $B(2r_1) \setminus B(r_1)$ about $x_0=0$ and the version of Bogovskii's operator for Lipschitz domains rather than star-shaped domains, see \cite[Chapter III]{galdi}.} Because $\Omega$ is a bounded $C^2$ domain, we may use the symmetries of the Navier-Stokes equations to obtain the following situation: $x_0 = 0$, $R_0 \geq 2$, and $\Omega \cap B(2) = \{ |x| < 2 : x_3 > \varphi(x') \}$
for a function $\varphi \in C^2(\overline{K(2)})$ satisfying~\eqref{phisatisfiesnstar}.

We require a lemma which essentially follows from boundary partial regularity, see~\cite{neustupapenel1999,kukavicarusinzianeJMFM2017} and related works for similar results.
\begin{lemma}[Regular annulus lemma]
\label{lem:regannlem}
There exist $0 < r_1 < r_2 \leq 1/4$ and $0 < \delta_1 < \delta_2 \leq T$ such that, on the following sets, $v$ is essentially bounded and $(v,q)$ belongs to $W^{2,1}_{p,\frac{3}{2}} \times W^{1,0}_{p,\frac{3}{2}}$ for all $p \geq 1$:
\begin{equation}
	\label{set1}
	\left( \left\lbrace E(r_2) \setminus E(r_1) \right\rbrace \cap \left\lbrace |x| < 2 : x_3 > \varphi(x') \right\rbrace \right) \times ]\delta_1,T[,
\end{equation}
\begin{equation}
	\label{set2}
	\left\lbrace |x| < 2 : x_3 > \varphi(x') \right\rbrace \times ]\delta_1,\delta_2[.
\end{equation}
\end{lemma}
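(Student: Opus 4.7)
The plan is to combine boundary partial regularity with a Fubini-type argument to locate a cone shell and an initial time window disjoint from the singular set $\mathcal{S}$ of $v$. By the partial regularity results~\cite{sereginshilkinsolonnikovboundarypartialreg2014,mikhailovshilkincurvedboundary2006}, $\mathcal{S}$ is closed in $(\Omega \cup \Gamma) \times ]0,T[$ with vanishing parabolic Hausdorff $1$-measure, $\mathcal{P}^1(\mathcal{S}) = 0$. Let $D := \overline{\Omega \cap B(2)}$, which is compact and contained in $\Omega \cup \Gamma$ by the standing hypothesis $R_0 \geq 2$.

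First I would select $\delta_1$ and $\delta_2$. Covering $\mathcal{S}$ by parabolic cylinders of radii $r_i$ with $\sum r_i$ arbitrarily small forces the temporal projection of $\mathcal{S}$ to have Lebesgue measure zero, since each cylinder covers a time interval of length at most $2 r_i^2$, and $\sum r_i^2 \leq (\sup r_i) \sum r_i$ can be made arbitrarily small. Hence there exist arbitrarily small $\delta_1 \in ]0,T[$ with $\mathcal{S} \cap (D \times \{\delta_1\}) = \emptyset$. Closedness of $\mathcal{S}$ in the open ambient set together with compactness of $D \times \{\delta_1\}$ yields $\epsilon > 0$ such that $\mathcal{S} \cap (D \times [\delta_1, \delta_1 + \epsilon]) = \emptyset$, and I set $\delta_2 := \delta_1 + \epsilon$. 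This handles the regular region~\eqref{set2}.

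For the cone shell~\eqref{set1}, I would spatially project the compact set $\mathcal{S} \cap (D \times [\delta_1, T])$ to obtain a compact $A \subset D$ with $\mathcal{H}^1(A) = 0$ (the spatial projection of a parabolic cylinder of radius $r$ is a Euclidean ball of radius $r$). The key observation is that the Lipschitz map $\rho(x', x_3) := x_3 + |x'|$ parametrizes the cone boundaries via $\p E(r) = \rho^{-1}(r)$, so $\rho(A) \subset [0, 1/4]$ is compact and Lebesgue-null, and hence its complement in $]0, 1/4]$ contains an open interval $(a, b)$. Any choice $r_1 < r_2$ inside $(a, b)$ gives
\begin{equation*}
    \overline{E(r_2) \setminus E(r_1)} = \rho^{-1}([r_1, r_2])
\end{equation*}
disjoint from $A$, so the closed cone shell avoids $\mathcal{S}$ throughout $[\delta_1, T]$.

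Finally, since the underlying sets of~\eqref{set1} and~\eqref{set2} are compact and lie at positive distance from the closed set $\mathcal{S}$, each of their points admits a parabolic neighborhood on which $v$ is smooth (including up to $\Gamma$). The local boundary Stokes regularity of Shilkin--Vialov~\cite{shilkinvialov2013}, combined with a standard bootstrap using $\nabla q = \Delta v - v \cdot \nabla v - \p_t v$ to upgrade the pressure from its \emph{a priori} $L_{3/2}$ bound, then yields the claimed $W^{2,1}_{p,3/2} \times W^{1,0}_{p,3/2}$ regularity for $(v, q)$ for every $p \geq 1$. The main subtlety I anticipate is carefully propagating the no-slip condition through the bootstrap near $\Gamma$, which is precisely what~\cite{shilkinvialov2013} is designed to handle.
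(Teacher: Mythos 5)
Your proof is correct and follows the same strategy as the paper's brief sketch: boundary partial regularity gives $\mathcal{P}^1(\mathcal{S}) = 0$, Fubini-type projections (temporal, then spatial via the Lipschitz map $\rho(x) = x_3 + |x'|$ parametrizing the cones as $E(r) = \{\rho < r\}$) locate a regular time window and cone shell, and the Shilkin--Vialov local boundary Stokes theory provides the higher-regularity bootstrap. You supply details — in particular the explicit map $\rho$ and the compactness bookkeeping — that the paper compresses into ``similar reasoning,'' but the underlying argument is the same.
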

\begin{proof}
The $L_\infty$ assertion follows from the boundary partial regularity proven in~\cite{sereginshilkinsolonnikovboundarypartialreg2014}. Indeed, suppose that for each $0 < \delta_1 < \delta_2 \leq T$, $v$ were not essentially bounded on~\eqref{set2}. Then $v$ would necessarily have a singular point in $\overline{\Omega}$ at every time $t \in ]0,T]$, contradicting that the one-dimensional parabolic Hausdorff measure of the singular set is zero. We obtain $r_1$ and $r_2$ by similar reasoning. The higher regularity assertion follows from a bootstrapping argument using the local boundary regularity theory for the non-stationary Stokes equations proven in~\cite{shilkinvialov2013} as long as one slightly increases $\delta_1$ and $r_1$ and slightly decreases $r_2$.
\end{proof}

We now continue with the proof of Proposition~\ref{pro:truncation}. For convenience, we denote $I = ]\delta_1,T[$ and $\cO = \cO(\varphi,r_1)$, which was defined in~\eqref{eq:odef}. We will justify the assertions of Proposition~\ref{pro:truncation} in order.

Let $\Phi \in C^\infty(\R^3)$ ($0 \leq \Phi \leq 1$) with $\Phi \equiv 1$ in a neighborhood of $E(r_1)$ and $\Phi \equiv 0$ in a neighborhood of $\R^3 \setminus E(r_2)$. We introduce a correction $w$ solving
\begin{equation}
	\label{eq:divergenceeqn}
	\left\lbrace
	\begin{aligned}
	\div w &= -\nabla \Phi \cdot v & &\text{ in } \cO \times I \\
	w &= 0 & &\text{ on } \p\cO \times I.
	\end{aligned}
	\right. 
	\end{equation}
	According to Lemma~\ref{lem:starlikelemma}, $\cO$ is star-shaped with respect to the ball $B^*$. Hence, we may apply Bogovskii's operator, whose properties are recalled in Lemma~\ref{lem:bogovskii} (with $A=64$), to solve~\eqref{eq:divergenceeqn}.

We claim
\begin{equation}
\label{vanishingtracephiv}
	\nabla \Phi \cdot v \in W^{1,1}_{p,\frac{3}{2}}(\cO \times I) \text{ and } 
	\nabla \Phi \cdot v \in L_{\frac{3}{2}}(I;\mathring{W}^{1}_{p,\avg}(\cO)) \text{ for all } p \geq 1.
\end{equation}
Recall that ${\rm supp}(\nabla\Phi) \subset \overline{E(r_2)} \setminus E(r_1)$. With this in mind, the regularity in~\eqref{vanishingtracephiv} is known from Lemma~\ref{lem:regannlem}, the vanishing trace follows from the no-slip boundary condition, and the zero average is verified by
\begin{equation}
	\int_{\cO} \nabla \Phi \cdot v \, dx = \int_{\Omega} \nabla \Phi \cdot v \, dx = \int_\Omega \div(\Phi v) \, dx = \int_{\p \Omega} \Phi v \cdot n \, dS = 0,
\end{equation}
since $\Phi v(\cdot,t)|_{\p \Omega} = 0$.
Thus, according to Lemma~\ref{lem:bogovskii},
\begin{equation}
	w, \nabla w, \p_t w \in L_{\frac{3}{2}}(I;\mathring{W}^{1}_{p}(\cO)) \text{ for all } p \geq 1.
\end{equation}
Moreover, we may extend $w$ by zero to obtain $w \in W^{2,1}_{p,\frac{3}{2}}(\Omega \times I)$ for all $p \geq 1$. The proof of~\eqref{wespaces} is concluded by using parabolic Sobolev embedding.

Next, we define $(V,Q) := (\Phi v + w, \Phi q)$. A direct computation shows that $(V,Q)$ satisfies the Navier-Stokes equations in the sense of distributions on $\Omega \times I$ with forcing term
\begin{equation}
\begin{gathered}
	f := (\p_t - \Delta) \Phi v - 2 \nabla \Phi \cdot \nabla v + \Phi v \cdot (v \otimes \nabla \Phi) + (\Phi^2 - \Phi) v \cdot \nabla v \\ + (\p_t - \Delta) w + \Phi v \cdot \nabla w + w \cdot \nabla (\Phi v) + w \cdot \nabla w + \nabla \Phi q.
	\end{gathered}
	\end{equation}
Then~\eqref{fespaces} follows from the known properties of $v$, $q$, $w$, and $\Phi$. In particular, we exploit that $\Phi v$ is essentially bounded on the support of $w$.

Notice that $\Phi \equiv 1$ and $w \equiv 0$ on $E(r_1) \cap \{ \varphi(x') > x_3 \}$. Together with~\eqref{phisatisfiesnstar}, this implies that there exists $0 < \bar{R}_0 < 2$ satisfying $(V,Q) \equiv (v,q)$ on $B(\bar{R}_0) \cap \{ \varphi(x') > x_3 \}$.

Let us assume that $v \in L_{\infty}(\Omega \times ]\delta_1,S[)$ for some $S \in I$. Because $v$ and $w$ belong to the energy space $L_{2,\infty} \cap W^{1,0}_2(\Omega \times I)$, $V$ belongs to the energy space as well. In addition, $V(\cdot,t)|_{\p \Omega} = 0$ for a.e. $t \in I$. Next, by our assumption, $V$ and $f$ have enough integrability to prove the energy equality on $\Omega \times ]\delta_1,S[$ directly (see Theorem~1.4.1, p. 272, in~\cite{Sohrbook}, for example). Together, these facts imply that $V$ is a weak Leray-Hopf solution on $\Omega \times ]\delta_1,S[$ with initial data $V(\cdot,\delta_1)$ and forcing term $f$. Our assumptions are enough to prove weak-strong uniqueness in the standard way (see Theorem~1.5.1, p. 276, in~\cite{Sohrbook}).
 
 Finally, assume that $\norm{v(\cdot,t)}_{L_3^\weak(B(2))} \leq M$ for some $t \in I$. Then $\norm{w(\cdot,t)}_{L_3^\weak(\cO)} \leq C(A)M$ according to~\eqref{eq:bogweaklp} in Remark~\ref{rmk:bogweaklp}. We use $A=64$ to complete the proof.
\end{proof}

\section{Local concentration of $L_3$ norm}
\label{sec:l3}

In this section, we state and prove a more quantitative version of Theorem~\ref{cor:l3} in $L_3^\weak$ by following the scheme explained in the introduction.

\begin{theorem}[Behavior of $L_3^\weak$ norm]
\label{thm:l3}
Let $\Omega \subset \R^3$ be a bounded $C^2$ domain and $\Gamma \subset \p\Omega$ be relatively open. Let $x^* \in \Omega\cup \Gamma$ and $R>0$ such that $\overline{\Omega_{x^*,R}} \subset \Omega\cup \Gamma$. 

For each $M > 0$, there exists a constant $\epsilon = \epsilon(\Omega,M) > 0$ such that the following property holds:

Let $v$ be a boundary suitable weak solution in $\Omega \times ]0,1[$ vanishing on $\Gamma$ and satisfying
\begin{equation}
\label{eq:vboundeforabit}
 v\in L_{\infty}(\Omega \times ]0,t[) \; \text{ for all } 0<t<1.
 \end{equation}
If there exists a sequence $t_{k}\uparrow 1$ such that
\begin{equation}\label{eq:localL3M}
\sup_{k \in \N} \norm{v(\cdot, t_k)}_{L_3^\weak(\Omega_{x^*,R})} \leq M
\end{equation}
and
\begin{equation}
\label{eq:disttoL}
	\dist_{L_3^\weak} \left( v(\cdot+x^*,1), \mathbb{L} \right) \leq \epsilon,
\end{equation}
then
\begin{equation}
\label{eq:conczstartisregular}
	z^* = (x^*,1) \text{ is a regular point of } v.
\end{equation}
\end{theorem}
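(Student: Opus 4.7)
The plan is a compactness-and-contradiction argument that follows the scheme of Seregin~\cite{sereginl3} and~\cite{barkerser16blowup}, but performed on the \emph{truncated} surrogate produced by Proposition~\ref{pro:truncation}. Fix $M>0$ and suppose no such $\epsilon$ exists; then for each $n\in\N$ there is a boundary suitable weak solution $v^{(n)}$ satisfying \eqref{eq:vboundeforabit}--\eqref{eq:disttoL} with $\epsilon=1/n$, yet having $z^*=(x^*,1)$ as a singular point. First I would apply Proposition~\ref{pro:truncation} at $x^*$ to each $v^{(n)}$, obtaining surrogates $V^{(n)}=\Phi v^{(n)}+w^{(n)}$ with subcritical forcing $f^{(n)}$; by Items~(3)--(5) of the proposition, $V^{(n)}$ is a weak Leray--Hopf solution with forcing $f^{(n)}$ on the appropriate interval, coincides with $v^{(n)}$ on $\Omega_{x^*,\bar R_0}$ up to the singular time, and satisfies $\sup_k\norm{V^{(n)}(\cdot,t_k)}_{L_3^\weak(\Omega)}\leq c_0 M$. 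In particular, the persistence-of-singularities statement of Appendix~\ref{sec:persistenceofsing} (Proposition~\ref{stabilitysingularpointshalfspace}) ensures that $z^*$ remains singular for $V^{(n)}$.

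Next I would zoom in on $z^*$ at scales $\lambda_n\dto 0$ chosen, as in~\cite{sereginl3,barkerser16blowup}, so that some scale-invariant parabolic quantity of $V^{(n)}$ just saturates a fixed $\varepsilon$-regularity threshold; in the boundary case I first flatten $\Gamma$ via a local $C^2$ diffeomorphism, using the machinery of Appendix~\ref{sec:persistenceofsing} to absorb the resulting lower-order perturbation, which also scales away. The rescaled solutions $V^{(n)}_{\lambda_n}(y,s)=\lambda_n V^{(n)}(x^*+\lambda_n y,1+\lambda_n^2 s)$ live on domains $\Omega_n$ expanding to $\R^3$ or $\R^3_+$; the rescaled forcings vanish in appropriate norms by the subcriticality in~\eqref{fespaces}; and~\eqref{eq:localL3M} guarantees that the rescaled datum at a time slice just before $s=0$ lies in $L_3^\weak$ with norm $\lesssim M$ uniformly in $n$. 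The analytic heart of the argument is a uniform energy bound for $V^{(n)}_{\lambda_n}$ on compact parabolic cylinders $\bigl(\Omega_n\cap B(\rho)\bigr)\times ]-1,0[$: I would obtain it by a Calder\'on-type splitting of the rescaled initial datum into a small-$L_3^\weak$ part and a bounded part, solving linearly for the bounded piece via Stokes estimates that are scale-invariant (and hence do not degenerate as $\Omega_n$ grows) and handling the remainder by the standard small-data energy method. A diagonal subsequence extraction then yields a weak solution $U$ on $\R^3\times]-1,0[$ or $\R^3_+\times]-1,0[$ satisfying the local energy inequality.

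To close the argument I combine the two remaining hypotheses. The distance bound~\eqref{eq:disttoL} with $\epsilon=1/n\to 0$ identifies the terminal datum of the blow-up limit with a member of $\mathbb{L}$ and, after the rescaling, forces $U(\cdot,0)\equiv 0$; the backward uniqueness and unique continuation arguments of~\cite{escauriazasereginsverak,barkerser16blowup}, which go through uniformly on $\R^3$ and $\R^3_+$, then propagate this to $U\equiv 0$ on a whole short interval $]-\tau,0]$. On the other hand, Proposition~\ref{stabilitysingularpointshalfspace} asserts that $(0,0)$ is a singular point of $U$, so $U$ cannot vanish on any such interval, and we have the desired contradiction. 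I expect the principal obstacle to be the uniform Calder\'on-type splitting on the expanding curved domains $\Omega_n$ together with the careful bookkeeping needed to verify that the rescaled forcings $f^{(n)}_{\lambda_n}$, as well as the boundary-flattening perturbations, vanish in norms compatible with the energy estimate; once these are in place, the passage to the limit and the backward uniqueness step are essentially standard.
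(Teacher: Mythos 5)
Your overall scheme---truncate, rescale, split the initial datum Calder\'on-style, pass to a blow-up limit, and conclude by backward uniqueness/persistence of singularities---is the right circle of ideas, and it is close in spirit to what the paper does. However, there is a concrete gap in the scale selection that would break the argument.

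You propose to choose the zoom scales $\lambda_n \dto 0$ so that ``some scale-invariant parabolic quantity of $V^{(n)}$ just saturates a fixed $\varepsilon$-regularity threshold,'' and then claim that \eqref{eq:localL3M} ``guarantees that the rescaled datum at a time slice just before $s=0$ lies in $L_3^\weak$ with norm $\lesssim M$ uniformly in $n$.'' These two statements do not fit together. The whole reason the Calder\'on-type energy argument works here is that the rescaling parameter is \emph{tied to the given sequence of good times}: the paper takes $R_k = \sqrt{1-t_k}$, which places the time slice $t_k$ exactly at $s = 0$ after rescaling, so the uniform $L_3^\weak$ bound at $t_k$ becomes a uniform $L_3^\weak$ bound on the rescaled initial datum $V^{(k)}(\cdot,0)$ by scale invariance. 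If you instead pick $\lambda_n$ by an $\varepsilon$-regularity saturation criterion, the rescaled time corresponding to $t_k$ is $(t_k - 1)/\lambda_n^2$, which has no reason to sit at (or near) a fixed negative time; it could be anywhere in $]-\infty,0[$. Without a controlled initial slice, the Calder\'on splitting and the subsequent energy bound (the paper's \eqref{Urescaleuniformenergyest}) fall apart, and nothing guarantees the compactness needed for the limit. In Seregin's $L_3$ argument and in Barker--Seregin~\cite{barkerser16blowup}---the works you cite---the scales are precisely $\sqrt{T^*-t_k}$; the $\varepsilon$-regularity ``saturation'' device belongs to a different family of arguments (quantitative scale selection in Type~I-type settings) and is not used here.

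Two further remarks. First, your framing as a contradiction over $\epsilon$---taking a sequence of solutions with $\epsilon=1/n$ and forcing $U(\cdot,0)\equiv 0$ in a second diagonal limit---is workable, but it requires an extra compactness step and uniformity of all constants over $n$; the paper achieves the same effect more economically by fixing $\epsilon$ and one solution, passing to a single blow-up limit $v^\infty$ with $\norm{v^\infty(\cdot,1)}_{L_3^\weak}\leq 2\epsilon$, and then invoking a Liouville-type lemma (Lemma~\ref{lem:Liouvilletheorem}) whose \emph{own} proof carries the compactness plus backward-uniqueness machinery. Second, your sketch omits the pressure: one needs uniform control of $\nabla Q^{(k)}$ via maximal regularity for the Stokes system on the growing domains $\Omega_k$ (the paper's Step~4, estimates \eqref{Vk0est}--\eqref{Vk3est}). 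Without it one cannot verify the convergence to a boundary suitable weak solution, nor invoke the persistence-of-singularities lemma, which requires $L_{3/2}$ convergence of the pressures. Finally, a small point: the truncated surrogate $V$ has the same singularity at $z^*$ not by persistence of singularities, but simply because $(V,Q)\equiv(v,q)$ on $\Omega_{x^*,\bar R_0}\times]\delta_1,T[$ by construction; the persistence lemma (Proposition~\ref{stabilitysingularpointshalfspace}) is used only when passing to the blow-up limit.
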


Here, $\mathbb{L}$ is the space of functions $f \in L_3^\weak$ satisfying
\begin{equation}
	\norm{f}_{L_3^\weak(B(r))} \to 0 \text{ as } r \to 0^+.
\end{equation}
Equivalently, $\mathbb{L}$ is the $L_3^\weak$ closure of the set of functions $g \in L_3^\weak$ that are smooth in a neighborhood of the origin.

\begin{remark}
	It seems possible to remove the dependence of $\epsilon$ on $\Omega$. For example, one can use a notion of `weak $L^{3,\infty}$ solution' in $\Omega \times ]0,1[$ (analogous to the notion in~\cite{barkersereginsverakstability} in the whole space), rather than the Calder{\'o}n-type splitting, to obtain energy estimates.
	In the boundary case, the limit solution $v^\infty$ would be a `weak $L^{3,\infty}$ solution' in $\R^3_+ \times ]0,1[$. A Liouville theorem analogous to Lemma~\ref{lem:Liouvilletheorem} and \cite[Remark 4.2]{globalweakbesov} should be possible for such solutions and complete the proof.
	It may also be possible to prove a version with control in $\dot B^{-1+\frac{3}{p}}_{p,\infty}$ along a sequence of times; the Calder{\'o}n-type splittings in~\cite{barkerweakstrong,albrittonblowupcriteria} and existence theory in~\cite{gigabesovspaces} may be useful here. Lastly,~\eqref{eq:disttoL} can probably be weakened using Besov spaces.
\end{remark}

\begin{proof}[Proof of Theorem~\ref{thm:l3}] The proof is by contradiction. Let $v$ be a boundary suitable weak solution in $\Omega \times ]0,1[$ vanishing on~$\Gamma$. Assume that \eqref{eq:vboundeforabit}-\eqref{eq:disttoL} are satisfied, where $\epsilon > 0$ is to be determined. For contradiction, assume that~\eqref{eq:conczstartisregular} is not satisfied. That is,
\begin{equation}
	z^* = (x^*,1) \text{ is a singular point of } v.
\end{equation}
By translating our domain, we may assume that $x^* = 0$. 

 \subsection{Truncation and rescaling}

 \subsubsection*{Step 1: Apply the trunction procedure}

 To begin, we apply the truncation procedure in Proposition~\ref{pro:truncation}. By slightly zooming in, we may set $\delta_1 = 0$. We summarize the resulting situation below:
\begin{equation}
	V \in L_\infty(\Omega \times ]0,t[) \text{ for all } t \in ]0,1[
\end{equation}
is the unique weak Leray-Hopf solution in $\Omega \times ]0,1[$ with initial data $V(\cdot,0)$ and forcing term
\begin{equation}\label{forcingspcaes}
  f \in L_{p,\frac{3}{2}}(\Omega \times ]0,1[) \text{ for all } p \geq 1.
 \end{equation}
 Its associated pressure is denoted by $Q$, and there exists $\bar{R} > 0$ such that
\begin{equation}
	\label{VQequivvqbarR}
	(V,Q) \equiv (v,q) \text{ on } \Omega_{x^*,\bar{R}} \times ]0,1[.
\end{equation}
 Additionally, \eqref{eq:localL3M} implies
 \begin{equation}\label{VlocalL3}
 \sup_{k \in \N} \norm{V(\cdot,t_k)}_{L_3^\weak(\Omega)} \leq M'<\infty,
 \end{equation}
 where $M'$ depends only on $M$.

 \subsubsection*{Step 2: Rescaling and key norm relations}

If $x^* \in \p\Omega$, then we rotate the original coordinate system such that, in the new coordinates,
\begin{equation}
	\label{eq:part1BR0}
	\Omega \cap B(R_0) = \left\lbrace x = (x',x_3) \in B(R_0): x_{3}> \varphi(x') \right\rbrace,
\end{equation}
where $R_0$ and $N_0$ are positive constants and $\varphi \in C^2(\overline{K(R_0)})$ satisfies
\begin{equation}
	\label{eq:part1varphi}
	\varphi(0)=0, \; \nabla \varphi(0)=0, \text{ and } [\varphi]_{C^2(K(R_0))}\leq N_0.
\end{equation}
Furthermore, we take $R_0 \leq \bar{R}$ in~\eqref{VQequivvqbarR}.

 Throughout, we denote $R_{k}:= \sqrt{1-t_{k}}$. 
 We rescale
 \begin{equation}\label{Vrescale}
 V^{(k)}(y,s):= R_{k} V(R_{k} y, t_{k}+ R_k^{2} s)
 \end{equation}
 and
 \begin{equation}\label{frescale}
 f^{(k)}(y,s):= R_k^3 f(R_{k} y, t_{k}+ R_k^{2} s).
 \end{equation}
 The above functions are defined on $\Omega_{k} \times ]0,1[$.
 Here, $\Omega_{k}:=\Omega/R_k$.
 From (\ref{VlocalL3}), we see that
 \begin{equation}\label{rescaleVL3}
 \sup_{k \in \N} \|V^{k}(\cdot,0)\|_{L_3^\weak(\Omega_{k})}=M'<\infty.
 \end{equation}
 Furthermore,
 \begin{equation}\label{rescalefnorm}
 \norm{f^{(k)}}_{L_{2,1}(\Omega_{k} \times ]0,1[)} \leq \norm{f^{(k)}}_{L_{2,\frac{3}{2}}(\Omega_{k} \times ]0,1[)} = R_k^{\frac{1}{6}} \cF,
 \end{equation}
 where
 \begin{equation}
	\cF = \norm{f}_{L_{2,\frac{3}{2}}(\Omega \times ]0,1[)}.
 \end{equation}
 We denote $u_0^{(k)} = V^{(k)}(\cdot,0)$.

 \subsection{\emph{A priori} estimates for rescaled truncated solution}


 In the sequel, $C$ may implicitly depend on $\Omega$ and $M'$.

 \subsubsection*{Step 3: Energy estimates}
 
We now prove \emph{a priori} energy estimates for $V^{(k)}$ using a Calder{\'o}n-type splitting~\cite{calderon90} of the initial data. Similar methods were exploited in previous papers~\cite{Jiasver2013,barkerweakstrong,albrittonblowupcriteria}, for example.

We decompose
\begin{equation}
	u_0^{(k)} = \tilde{u_0}^{(k)} + \bar{u_0}^{(k)},
\end{equation}
\begin{equation}
	\tilde{u_0}^{(k)} = \bP \left( \mathbf{1}_{ \{ |u_0^{(k)}| < 1/2 \} } u_0^{(k)} \right),
\end{equation}
\begin{equation}
	\label{eq:tildebaru0bound}
	\norm{\tilde{u_0}^{(k)}}_{L_2(\Omega_k)} + \norm{\bar{u_0}^{(k)}}_{L_{\frac{10}{3}}(\Omega_k)}  \leq C.
\end{equation}
Let
 \begin{equation}
	V^{(k)}:= U^{(k)}+L^{(k)}. 
 \end{equation}
 Here, $L^{(k)}:= L^{(k)}(\bar{u_0}^{(k)})$ satisfies
 \begin{equation}\label{linearequation1}
 \left\lbrace
 \begin{aligned}
 \p_t L^{(k)}-\Delta L^{(k)} + \nabla \pi^{(k)} &= 0 & &\text{ in } \Omega_{k} \times \R_+ \\
 \div L^{(k)}&=0 & &\text{ in } \Omega_{k} \times \R_+ \\
 L^{(k)}\big|_{\partial \Omega_{k}}&=0 & &\text{ in } \p\Omega_k \times \R_+ \\
 L^{(k)}(\cdot,0) &= \bar{u_0}^{(k)} & &\text{ in } \Omega_k, \\
 \end{aligned}
 \right.
 \end{equation}
 whereas $U^{(k)}$ satisfies the following perturbed Navier-Stokes system:
 \begin{equation}
 \left\lbrace
 \begin{aligned}
 \p_t U^{(k)}-\Delta U^{(k)} + V^{(k)} \cdot \nabla U^{(k)} + U^{(k)} \cdot \nabla L^{(k)} +\nabla P^{(k)}&= f^{(k)} - L^{(k)} \cdot \nabla L^{(k)} & &\text{ in } \Omega_{k} \times ]0,1[ \\
 \div L^{(k)}&=0 & &\text{ in } \Omega_{k} \times ]0,1[ \\
 U^{(k)}\big|_{\partial \Omega_{k}}&=0 & &\text{ in } \p\Omega_k \times ]0,1[ \\
 U^{(k)}(\cdot,0) &= \tilde{u_0}^{(k)} & &\text{ in } \Omega_k. \\
 \end{aligned}
 \right.
 \end{equation}
 By the smoothing estimates for the Stokes semigroup in \cite[Theorem 5.1]{Sol1977} and the estimate~\eqref{eq:tildebaru0bound} for $\bar{u_0}^{(k)}$, we have the following for all $0<t<1$ and $\frac{10}{3} \leq p \leq \infty$:
 \begin{equation}\label{linearestzeroforce}
 \norm{L^{(k)}(t)}_{L_{p}(\Omega_{k})}\leq \frac{C(\Omega)M^{'}}{t^{\frac{3}{2}(\frac{3}{10}-\frac{1}{p})}},
 \end{equation}
 \begin{equation}\label{gradientlinearestzeroforce}
 \norm{\nabla L^{(k)}(t)}_{L_{p}(\Omega_{k})}\leq \frac{C(\Omega) M^{'}}{t^{\frac{3}{2}(\frac{3}{10}-\frac{1}{p})+\frac{1}{2}}}.
 \end{equation}
 Notice that $L^{(k)}$ belongs to the energy space (see Lemma 1.5.1, p. 204, of \cite{Sohrbook}, for example); hence, $U^{(k)}$ does as well. Since $L^{(k)} \in L_5(\Omega^{(k)} \times ]0,1[)$ due to~\eqref{linearestzeroforce}, we may infer that $U^{(k)}$ satisfies the energy equality for $0 < t < 1$ (see Theorem 2.3.1, p. 226, of \cite{Sohrbook}, for example).
 %
 %
 %
 Namely,
 \begin{equation}\label{Urescaledenergyequality}
\frac{1}{2}\norm{U^{(k)}(\cdot,t)}_{L_{2}(\Omega_{k})}^2+\int_{0}^{t}\int_{\Omega_{k}} |\nabla U^{(k)}|^2  dy \, ds=\frac{1}{2} \norm{\tilde{u_0}^{(k)}}_{L_2(\Omega_k)}^2 + $$
$$+ \int_{0}^{t}\int_{\Omega_{k}} L^{(k)} \otimes L^{(k)} :\nabla U^{(k)} + f^{(k)}\cdot U^{(k)} +U^{(k)}\otimes L^{(k)}: \nabla U^{(k)} dy \, ds.
 \end{equation}
 Using H\"{o}lder's inequality and the estimate~\eqref{eq:tildebaru0bound} for $\tilde{u_0}^{(k)}$, it can be shown that 
 \begin{equation}\label{Urescaledenergyest}
 \norm{U^{(k)}(\cdot,t)}_{L_{2}(\Omega_{k})}^2+\int_{0}^{t}\int_{\Omega_{k}} |\nabla U^{(k)}|^2 dyds\leq C + $$ 
 $$  + C \int_{0}^{t}\int_{\Omega_{k}} |L^{(k)}|^4 dy+ 
\norm{L^{(k)}(\cdot,s)}_{L_\infty(\Omega_k)}^2 \norm{U^{(k)}(\cdot,s)}_{L_{2}(\Omega_{k})}^2
 +\norm{f^{(k)}(\cdot,s)}_{L_{2}(\Omega_{k})}\norm{U^{(k)}(\cdot,s)}_{L_{2}(\Omega_{k})} \, ds .
 \end{equation}
 This, the smoothing estimate~\eqref{linearestzeroforce} (with $p=4,\infty$) and~\eqref{rescalefnorm} concerning $f^{(k)}$ imply
 \begin{equation}\label{Urescaledenergyest1}
 \norm{U^{(k)}}_{L_{\infty}(0,t; L_{2}(\Omega_{k})}^2+ \int_{0}^{t}\int_{\Omega_{k}} |\nabla U^{(k)}|^2 dyds\leq C +$$
 $$ +   C t^{\frac{7}{10}}+CR_{k}^{\frac{1}{6}} \cF \norm{U^{(k)}}_{L_{2,\infty}(\Omega_{k} \times ]0,t[)}+ C \int_{0}^{t} \frac{1}{s^{\frac{9}{10}}} \norm{U^{(k)}(\cdot,s)}_{L_{2}(\Omega_{k})}^2 \, ds.
 \end{equation}
 Applying Young's inequality and using that $t<1$, we obtain
 \begin{equation}\label{Urescaleenergyest2}
 \norm{U^{(k)}(\cdot,t)}_{L_{2}(\Omega_{k})}^2+ \int_{0}^{t}\int_{\Omega_{k}} |\nabla U^{(k)}|^2 dy\, ds \leq  C+CR_{k}^{\frac{1}{3}} \cF^2+ C \int_{0}^{t} \frac{1}{s^{\frac{9}{10}}} \norm{U^{(k)}(\cdot,s)}_{L_{2}(\Omega_{k})}^2 ds.
 \end{equation}
 An application of the generalized Gronwall lemma then gives that for all $0<t<1$:
 \begin{equation}\label{Urescaleuniformenergyest}
 \norm{U^{(k)}(\cdot,t)}_{L_{2}(\Omega_{k})}^2+ \int_{0}^{t}\int_{\Omega_{k}} |\nabla U^{(k)}|^2 dy \, ds \leq C \times \left(1+ R_{k}^{\frac{1}{3}} \cF^2 \right).
 \end{equation}
 Using this, interpolation of Lesbesgue spaces, the Sobolev embedding theorem, and the smoothing estimate~\eqref{linearestzeroforce} (with $p=10/3$) gives
 \begin{equation}\label{Vrescaled10/3uniform}
 \norm{V^{(k)}}_{L_{\frac{10}{3}}(\Omega_{k} \times ]0,1[)}\leq C \times \left(1 + R_{k}^{\frac{1}{6}} \cF \right).
 \end{equation}
 
 \subsubsection*{Step 4: Maximal regularity estimates}

 Next, we use maximal regularity estimates to obtain estimates on the time derivative and the pressure. We remark that, in the whole-space setting, one can simply represent the pressure in terms of $u$ using Riesz transforms and then estimate the time derivative (in a negative Sobolev space, say) from the equation. This method is not available to us here.

 We decompose $-V^{(k)}\cdot\nabla V^{(k)}+f^{(k)}$ as
 \begin{equation}
	-V^{(k)}\cdot\nabla V^{(k)}+f^{(k)}=f^{(k),0}+f^{(k),1}+f^{(k),2}+f^{(k),3}.
 \end{equation}
Here,
\begin{equation}\label{fk0def}
f^{(k),0}:= f^{(k)},
\end{equation}
\begin{equation}\label{fk1def}
f^{(k),1}:= -U^{(k)}\cdot \nabla U^{(k)}
\end{equation}
\begin{equation}\label{fk2def}
f^{(k),2}:=-L^{(k)}\cdot\nabla L^{(k)}-U^{(k)}\cdot\nabla L^{(k)}
\end{equation}
\begin{equation}\label{fk3def}
f^{(k), 3}:= -L^{(k)}\cdot \nabla U^{(k)}.
\end{equation}
From (\ref{rescalefnorm}), we have
\begin{equation}\label{fk0est}
\norm{f^{(k),0}}_{L_{2,\frac{3}{2}}(\Omega_{k} \times ]0,1[)}\leq R_{k}^{\frac{1}{6}} \cF
\end{equation}
From (\ref{Urescaleuniformenergyest}), Sobolev embedding, interpolation, and H{\"o}lder's inequality, we have
\begin{equation}\label{fk1est}
\norm{f^{(k),1}}_{L_{\frac{9}{8},\frac{3}{2}}(\Omega_{k} \times ]0,1[)}\leq C \times \left(1+ R_{k}^{\frac{1}{3}} \cF^2 \right).
\end{equation} 
Using (\ref{linearestzeroforce})-(\ref{gradientlinearestzeroforce}) and (\ref{Urescaleuniformenergyest}), we infer that
\begin{equation}\label{fk2+3est}
\norm{f^{(k),2}}_{L_{\frac{10}{3}}(\Omega_{k} \times ]\frac{1}{4}, 1[)}+\norm{f^{(k),3}}_{L_{2}(\Omega_{k} \times ]\frac{1}{4}, 1[)}\leq C \times \left(1+ R_{k}^{\frac{1}{6}} \cF \right).
\end{equation}

In order to apply maximal regularity, it is convenient to get rid of the initial condition. Let us fix a smooth cut-off function $\chi$ such that
\begin{equation}
\chi(t):= \begin{cases}
1 & \text{ if } 1/4<t<2 \\
 0 & \text{ if } 0<t<1/8.
\end{cases}
\end{equation}
Using the uniqueness and maximal regularity results for the linear Stokes system in \cite[Theorem 2.8]{Gigasohr91}, we may split $\chi V^{(k)}$ and $\chi Q^{(k)}$ 
in the following way:
\begin{equation}
\label{chiV}
	\chi V^{(k)}=V^{(k),0}+V^{(k),1}+V^{(k),2}+V^{(k),3},
\end{equation}
\begin{equation}
\label{chiQ}
	\chi Q^{(k)}= Q^{(k),0}+Q^{(k),1}+Q^{(k),2}+Q^{(k),3},
\end{equation}
for $(x,t)\in \Omega_{k} \times ]0,1[$. Here,
\begin{equation}
\left\lbrace
\begin{aligned}
\p_t V^{(k),i}-\Delta V^{(k),i}+\nabla Q^{(k),i} &= g^{(k),i} & &\text{ in } \Omega_k \times ]0,1[ \\
\div V^{(k),i}&=0 & &\text{ in } \Omega_k \times ]0,1[\\
V^{(k),i}\big|_{\p \Omega_{k}}&=0 & &\text{ in } \p\Omega_k \times ]0,1[\\
V^{(k),i}(\cdot,0)&=0 & &\text{ in } \Omega_k
\end{aligned}
\right.
\end{equation}
for $i=0,\hdots,3$, where
\begin{equation}
	g^{(k),i} := \chi(t) f^{(k),i} - \delta_{i2}\chi'(t) V^{(k)},
\end{equation}
and $\delta_{i2}$ is the Kronecker delta.

 Using (\ref{Urescaleuniformenergyest}),  (\ref{fk0est})-(\ref{fk2+3est}) and the maximal regularity estimates in \cite[Theorem 2.8]{Gigasohr91}, it  follows that
 \begin{equation}\label{Vk0est}
 \|\partial_tV^{(k),0}\|_{L_{ 2, \frac{3}{2}}(\Omega_{k} \times ]0,1[)}+\|\nabla^2 V^{(k),0}\|_{L_{ 2, \frac{3}{2}}(\Omega_{k} \times ]0,1[)}+\|\nabla Q^{(k),0}\|_{L_{2, \frac{3}{2}}(\Omega_{k} \times ]0,1[)} $$$$\leq c\|g^{(k),0}\|_{L_{ 2, \frac{3}{2}}(\Omega_{k} \times ]0,1[)} \leq C_{0} \times R_{k}^{\frac{1}{6}} \cF,
 \end{equation}
\begin{equation}\label{Vk1est}
\|\partial_tV^{(k),1}\|_{L_{\frac 9 8, \frac 3 2}(\Omega_{k} \times ]0,1[)}+\|\nabla^2 V^{(k),1}\|_{L_{\frac 9 8, \frac 3 2}(\Omega_{k} \times ]0,1[)}+\|\nabla Q^{(k),1}\|_{L_{\frac 9 8, \frac 3 2}(\Omega_{k} \times ]0,1[)} $$$$\leq c\|g^{(k),1}\|_{L_{\frac 9 8, \frac 3 2}(\Omega_{k} \times ]0,1[)} \leq C_{1} \times \left(1+ R_{k}^{\frac{1}{3}} \cF^2 \right), 
\end{equation}
\begin{equation}\label{Vk2est}
\|\partial_tV^{(k),2}\|_{L_{\frac{10}{3}}(\Omega_{k} \times ]0,1[)}+\|\nabla^2 V^{(k),2}\|_{L_{\frac{10}{3}}(\Omega_{k} \times ]0,1[)}+\|\nabla Q^{(k),2}\|_{L_{\frac{10}{3}}(\Omega_{k} \times ]0,1[)}\leq $$$$\leq c\|g^{(k),2}\|_{L_{\frac{10}{3}}(\Omega_{k} \times ]0,1[)} \leq C_{2} \times \left(1+ R_{k}^{\frac{1}{6}} \cF \right)
\end{equation}
and
\begin{equation}\label{Vk3est}
\|\partial_tV^{(k),3}\|_{L_{2}(\Omega_{k} \times ]0,1[)}+\|\nabla^2 V^{(k),3}\|_{L_{2}(\Omega_{k} \times ]0,1[)}+\|\nabla Q^{(k),3}\|_{L_{2}(\Omega_{k} \times ]0,1[)}\leq $$$$\leq c\|g^{(k),3}\|_{L_{2}(\Omega_{k} \times ]0,1[)} \leq C_{3} \times \left(1+ R_{k}^{\frac{1}{6}} \cF \right).
\end{equation}
Furthermore, from (\ref{linearestzeroforce})-(\ref{gradientlinearestzeroforce}) and (\ref{Urescaleuniformenergyest}), we have that for any finite $a>0$:
\begin{equation}\label{Vkuniformenergyest}
\norm{V^{(k)}}_{L_{2,\infty}\left(\Omega_{k,x^*,a} \times ]\frac{1}{4},1[\right)} +\norm{\nabla V^{(k)}}_{L_{2}\left(\Omega_{k,x^*,a} \times ]\frac{1}{4},1[\right)} \leq C_{4}(a) \times \left(1+ R_{k}^{\frac{1}{6}} \cF \right),
\end{equation}
where $\Omega_{k,x^*,a} = \Omega_{k}\cap B(x^*/R_k,a)$.

\subsection{Conclusion}
In the sequel, we suppose $x^* \in \Gamma$. The interior case is described in Remark~\ref{rmk:interiorcase}.

\subsubsection*{Step 5: Flattening the boundary}

Recall $R_0$, $N_0$ and $\varphi$ satisfying~\eqref{eq:part1BR0}-\eqref{eq:part1varphi} in Step 2.

Considering the rescaled domains $\Omega_k = \Omega/R_k$, we have
\begin{equation}\label{domainrescalechange}
\Omega_{k} \cap B(R_0/R_{k}) = \left\lbrace |x| \leq R_0/R_k : x_{3}> \varphi_k(x') \right\rbrace.
\end{equation}
where $\varphi_k(x') := \varphi ( R_k x')/R_{k}$.
Obviously,
\begin{equation}\label{almostflat}
[\varphi_k]_{C^{2}(K(R_0/R_k))} \leq 3R_{k} N_0.
\end{equation}
We make the change of coordinates\footnote{Note the difference between $\phi$ and $\varphi$.} 
\begin{equation}\label{flatteningtheboundary}
x= \phi_k(y):=(y_1, y_2, y_3- \varphi_k(y_1,y_2)),
\end{equation}
\begin{equation}
	(\phi_k)^{-1}(x):= (x_1, x_2, x_3+\varphi_k(x_1,x_2) ).
\end{equation}
Using (\ref{almostflat}), we see that for $k$ sufficiently large, we have that for any $R\leq R_0/(2R_{k})$:
\begin{equation}\label{coordinatemappings1}
B^{+}(R) \subset \phi_k\left(\Omega_{k} \cap B(3R/2) \right) \subset B^{+}(2R),
\end{equation}
\begin{equation}\label{coordinatrmapping2}
(\phi_k)^{-1}(B^{+}(R)) \subset \left(\Omega_{k} \cap B(3R/2)\right) \subset (\phi_k)^{-1}(B^{+}(2R)).
\end{equation}
Fix $R>0$ and consider $k\geq \bar{k}(R,\Omega)$ sufficiently large, such that $R \leq R_0/(2R_k)$ and $[\varphi]_{C^2(K(R))} \leq \mu^*/R$, where $\mu^*$ is defined in Appendix~\ref{sec:persistenceofsing}. We define
\begin{equation}\label{vkchangeofcoordiantes}
\hat{v}^{(k)} := v^{(k)} \circ (\phi_k)^{-1},
\end{equation}
\begin{equation}\label{qkchangeofcordinates}
\hat{q}^{(k)} := q^{(k)} \circ (\phi_k)^{-1}.
\end{equation}
Then $(\hat{v}^{(k)}, \hat{q}^{(k)}, \varphi_k)$ is a boundary suitable weak solution of the flattened Navier-Stokes equations in $B^{+}(R) \times ]0,1[$.
Furthermore,
\begin{equation}\label{singpointflattened}
(0,1) \text{ is a singular point of } \hat{v}^{(k)}, \text{ for all } k\in\mathbb{N}. 
\end{equation}

\subsubsection*{Step 6. Passage to the limit}

By increasing $\bar{k}(R,\Omega)$ (and considering $k \geq \bar{k}$), we may ensure that $R_k^{\frac{1}{6}} \cF \leq 1$ in~\eqref{rescalefnorm}. Then the following hold:

\begin{enumerate}[1)]
	\item First,
	\begin{equation}
\hat{v}^{(k)} = \hat{V}^{(k)} \text{ on } B^+(R) \times ]0,1[.
\end{equation}
Using the change of variables (\ref{coordinatemappings1})-(\ref{coordinatrmapping2}), the $L_{\frac{10}{3}}$ estimate (\ref{Vrescaled10/3uniform}) for $V$, we have that 
\begin{equation}
	\hat{V}^{(k)} \text{ is uniformly bounded in } L_{\frac{10}{3}}(B^{+}(R)\times ]0,1[)
\end{equation}
with bounds independent of $R$.
 In addition, using the local energy estimate~(\ref{Vkuniformenergyest}) for $V$,
\begin{equation}
	\hat{V}^{(k)} \text{ is uniformly bounded in } L_{2,\infty} \cap W^{1,0}_2(B^+(R) \times ]1/4,1[),
\end{equation}
with bounds depending on $R$.

\item Next,
\begin{equation}
	\hat{V}^{(k)} = \sum_{i=0}^3 \hat{V}^{(k),i} \text{ on } B^+(R) \times ]1/4,1[. 
\end{equation}
Using the change of variables (\ref{coordinatemappings1})-(\ref{coordinatrmapping2}) 
and the estimates (\ref{Vk0est})-(\ref{Vk3est}) for $V^{(k),i}$, $i=0,\hdots,3$, 
we obtain the following:
\begin{equation}
	\hat{V}^{(k),0},\hdots,\hat{V}^{(k),3} \text{ are uniformly bounded in } W^{2,1}_{2, \frac{3}{2}}, W^{2,1}_{\frac{9}{8},\frac{3}{2}}, W^{2,1}_{2}, W^{2,1}_{\frac{10}{3}}(B^{+}(R) \times ]1/4, 1[),
\end{equation} 
respectively, 
  with bounds independent of $R$. Moreover, 
$\hat{V}^{(k),0}$ tends to zero in norm.
\item Finally,
\begin{equation}
	\hat{q}^{(k)} = \sum_{i=0}^3 \hat{Q}^{(k),i}.
\end{equation}
Again, using the change of variables (\ref{coordinatemappings1})-(\ref{coordinatrmapping2}) 
and the estimates (\ref{Vk0est})-(\ref{Vk3est}) for $Q^{(k),i}$, $i=0,\hdots,3$, we obtain that
\begin{equation}
	\nabla \hat{Q}^{(k),0},\hdots, \nabla \hat{Q}^{(k),3} \text{ are uniformly bounded in } L_{2, \frac{3}{2}}, L_{\frac{9}{8},\frac{3}{2}}, L_{2}, L_{\frac{10}{3}}(B^{+}(R) \times ]1/4, 1[),
\end{equation}
 respectively, with bounds independent of $R$. Concerning $\nabla \hat{Q}^{(k),1}$, note that
 \begin{equation}
	W^{1,0}_{\frac{9}{8},\frac{3}{2}}(B^+(R) \times ]1/4,1[) \hookrightarrow L_{\frac{3}{2}}(B^+(R) \times ]1/4,1[).
 \end{equation} Moreover, $\nabla \hat{Q}^{(k),0}$ tends to zero in norm.
\end{enumerate}


Let us examine the solution at time $t=1$. 
By~\eqref{eq:disttoL}, we may write $v(\cdot,1) = X + Y$, with $\norm{X}_{L_3^\weak} \leq 2\epsilon$ and $Y \in \mathbb{L}$. Let $X^{(k)}$ and $Y^{(k)}$ denote the rescaled versions of $X$ and $Y$, respectively:
\begin{equation}
	\hat{v}^{(k)}(\cdot,1) = X^{(k)} \circ (\phi_k)^{-1} + Y^{(k)} \circ (\phi_k)^{-1}.
\end{equation}
Since $(\phi_k)^{-1}$ is measure preserving, we may pass to a subsequence such that $X^{(k)} \circ (\phi_k)^{-1} \wstar X^\infty$ in $L_3^\weak$ and $\norm{X^\infty} \leq 2\epsilon$. On the other hand, $Y^{(k)} \circ (\phi_k)^{-1} \wstar 0$ in the sense of distributions on $\R^3_+$:
\begin{equation}
	R_k \left\la Y(R_k \cdot), \zeta \circ \phi_k \right\ra = o(1) \times \norm{\zeta}_{L^{\frac{3}{2},1}} \text{ for all } \zeta \in C^\infty_0(\R^3_+),
\end{equation}
since $\norm{Y^{(k)}}_{L_3^\weak(B(r))} \to 0$ as $r \to 0^+$.


We are ready to pass to the limit. Using the estimate (\ref{almostflat}) for $\nabla^2 \varphi_k$, 1)-3) above, and the compact embedding
\begin{equation}
	W^{2,1}_{s,q}(B^{+}(R) \times ]1/2,1[) \hookrightarrow C([1/2,1]; L_{s}(B^{+}(R)), \quad 1 \leq s \leq \infty, \, q > 1, \, R > 0,
\end{equation}
we can extract a diagonal subsequence that converges in the sense of distributions to a boundary suitable weak solution $(v^{\infty}, q^{\infty})$ of the Navier-Stokes equations in $\mathbb{R}^3_{+}\times ]1/2, 1[$. For all $R>0$,
\begin{equation}
	\label{hatvhatqconv}
	\hat{v}^{(k)} \to v^\infty \text{ in } L_3(B^+(R) \times ]1/2,1[),
\end{equation}
\begin{equation}
	\hat{q}^{(k)} \wto q^\infty \text{ in } L_{\frac{3}{2}}(B^+(R) \times ]1/2,1[),
\end{equation}
see Lemma~\ref{vqcompactnesslemma}.
This and (\ref{singpointflattened}) allow us to apply Proposition~\ref{stabilitysingularpointshalfspace} concerning the stability of singular points 
for the flattened Navier-Stokes equations. Hence, we infer that
\begin{equation}\label{singpointflattenedvinfinity}
(0,1) \text{ is a singular point of } (v^{\infty}, q^{\infty}).
\end{equation}
Furthermore,
\begin{equation}\label{vinfinity10/3}
\norm{v^{\infty}}_{L_{\frac{10}{3}}(\mathbb{R}^3_{+} \times ]1/2, 1[)} \leq C_4(\Omega,M),
\end{equation}
and, using $\mathbf{P}$ to denote $L_{\frac{9}{8},\frac{3}{2}}+L_{2}+L_{\frac{10}{3}}$, we have
\begin{equation}\label{qinfinityspaces}
\norm{\nabla q^{\infty}}_{\mathbf{P}(\mathbb{R}^3_{+} \times ]1/2, 1[)} \leq C_5(\Omega,M).
\end{equation}

\subsubsection*{Step 7: Obtaining the contradiction}

To conclude, we are going to use the following auxiliary Liouville theorem. A similar result was obtained by the authors in~\cite[Remark 4.2]{globalweakbesov} without boundary and employed in~\cite[Theorem 4.1]{albrittonbarkerlocalregI} in a similar manner.
\begin{lemma}[Liouville theorem]
\label{lem:Liouvilletheorem}
	Let $\Omega_\infty = \R^3, \R^3_+$ (with $\Gamma_\infty = \emptyset, \{ x_3 = 0 \}$, respectively) and $(v^\infty,q^\infty)$ be a boundary suitable weak solution in $\Omega_\infty \times ]1/2,1[$ vanishing on $\Gamma_\infty$. For all $M_\infty > 0$, there exist constants $\epsilon_\infty, c_\infty > 0$ depending on $M_\infty$ and satisfying the following property. If
	\begin{equation}
	\label{vinftycompactness}
	\norm{v^\infty}_{L_{\frac{10}{3}}(\Omega_\infty \times ]1/2,1[)} + \norm{\nabla q^\infty}_{\mathbf{P}(\Omega_\infty \times ]1/2,1[)} \leq M_\infty
	\end{equation}
	and
	\begin{equation}
	\norm{v^\infty(\cdot,1)}_{L_3^\weak(\Omega_\infty)} \leq \epsilon_\infty,
	\end{equation}
	then
	\begin{equation}
	|v| \leq c_\infty \text{ on } \Omega_\infty \times ]3/4,1[.
	\end{equation}
\end{lemma}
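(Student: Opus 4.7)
The proof will proceed by contradiction: I will produce a limiting boundary suitable weak solution on $\Omega_\infty \times ]1/2,1[$ that vanishes at $t=1$, then invoke backward uniqueness to show the limit is identically zero, and finally derive a contradiction from the persistence of singularities (Proposition~\ref{stabilitysingularpointshalfspace} for the half-space case, the standard interior analog otherwise).

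\textbf{Reduction to a sequence with singular points.} Negating the conclusion produces $M_\infty > 0$, a sequence $\epsilon_n \to 0^+$, and boundary suitable weak solutions $(v_n,q_n)$ on $\Omega_\infty \times ]1/2,1[$, vanishing on $\Gamma_\infty$, satisfying~\eqref{vinftycompactness} with the common constant $M_\infty$ and with $\norm{v_n(\cdot,1)}_{L_3^\weak(\Omega_\infty)} \leq \epsilon_n$, but with $\esssup_{\Omega_\infty \times ]3/4,1[} |v_n| \to \infty$. The Caffarelli-Kohn-Nirenberg partial regularity theory (interiorly, and~\cite{sereginshilkinsolonnikovboundarypartialreg2014} at the boundary) ensures the singular set of each $v_n$ has one-dimensional parabolic Hausdorff measure zero; in particular, the failure of a uniform $L_\infty$ bound forces each $v_n$ to possess a singular point $z_n \in \overline{\Omega_\infty \cup \Gamma_\infty} \times [3/4,1]$. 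After extraction, $z_n \to z^*$.

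\textbf{Compactness, limit, and vanishing at $t=1$.} The bounds in~\eqref{vinftycompactness} together with $\p_t v_n = \Delta v_n - v_n\cdot\nabla v_n - \nabla q_n$ give equicontinuity in time in negative Sobolev norms. Aubin-Lions on compact parabolic subsets of $\Omega_\infty \times ]1/2,1[$ yields, up to a further subsequence, $v_n \to v^\infty$ strongly in $L_3^\loc$ and $\nabla q_n \wstar \nabla q^\infty$ in $\mathbf{P}$, with $(v^\infty,q^\infty)$ a boundary suitable weak solution on $\Omega_\infty \times ]1/2,1[$ vanishing on $\Gamma_\infty$. Since $\norm{v_n(\cdot,1)}_{L_3^\weak} \to 0$, the limit satisfies $v^\infty(\cdot,1)=0$. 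A CKN-type $\epsilon$-regularity argument at $t=1$, fed by the vanishing of $v^\infty(\cdot,1)$ and the critical control~\eqref{vinftycompactness}, then upgrades $v^\infty$ to be smooth and bounded on a slab $\Omega_\infty \times ]1-\delta,1[$. Backward uniqueness for parabolic equations with bounded drift (via Carleman estimates, as in~\cite{escauriazasereginsverak} interiorly and~\cite{barkerser16blowup} in the half-space setting adapted to the no-slip condition) forces $v^\infty \equiv 0$ on this slab, and bootstrapping over compact time subintervals extends this to all of $\Omega_\infty \times ]1/2,1[$. Since $v^\infty \equiv 0$ has no singular points, the persistence of singularities applied to $z_n \to z^*$ and the convergence $v_n \to v^\infty$ forces $z^*$ to be singular for $v^\infty$, a contradiction.

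\textbf{Main obstacle.} The most delicate step is the half-space case, where backward uniqueness up to the flat boundary $\{x_3 = 0\}$ must respect the no-slip condition; this is handled by the Carleman estimates specific to the half-space in~\cite{barkerser16blowup}, combined with sufficient regularity of $v^\infty$ and $\nabla v^\infty$ up to the boundary in the slab $\Omega_\infty \times ]1-\delta,1[$. A secondary technical point is passing to the limit in the nonlinearity given only the sum-space control $\nabla q^\infty \in \mathbf{P}$, which is handled by the strong $L_3^\loc$ convergence of $v_n$ obtained above.
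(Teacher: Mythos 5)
Your overall scheme (argue by contradiction, extract singular points via partial regularity, compactness to a limit boundary suitable weak solution that vanishes at $t=1$, backward uniqueness, then persistence of singularities) is the same as the paper's, but there are two genuine gaps.

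\textbf{Missing spatial translation.} You write ``After extraction, $z_n \to z^*$,'' but the bound~\eqref{vinftycompactness} is not translation-coercive, so the singular points $z_n=(x_n,t_n)$ may escape to spatial infinity, in which case no subsequence of $z_n$ converges and the limiting $v^\infty$ need not have any singularity at all. The paper's sketch translates each $v_n$ in space so that the singular point sits in a bounded region before passing to the limit. This step is not cosmetic: when $\Omega_\infty=\R^3_+$, the distance from $x_n$ to the boundary may stay bounded (after a horizontal translation the limit domain is still $\R^3_+$) or go to infinity (the limit domain becomes $\R^3$), which is exactly why the statement of the Lemma, and the subsequent application of backward uniqueness, must allow both $\R^3$ and $\R^3_+$. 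Your proposal cannot cover the second alternative because you never move the singular points.

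\textbf{Unjustified ``smooth on a slab'' claim.} You assert that $\epsilon$-regularity at $t=1$, fed only by $v^\infty(\cdot,1)=0$ and~\eqref{vinftycompactness}, upgrades $v^\infty$ to be smooth and bounded on $\Omega_\infty\times]1-\delta,1[$. This does not follow: $\epsilon$-regularity requires smallness of scale-invariant quantities on backward parabolic cylinders, and knowing the time-slice $v^\infty(\cdot,1)=0$ gives no control of $C(v^\infty,z,r)$ for $z=(x,1)$ and $r>0$ without further argument. Worse, the claim is self-defeating within your own proof: by your persistence-of-singularities step the limit $v^\infty$ \emph{does} have a singular point at some $z^*\in\Omega_\infty\times[3/4,1]$, so it cannot be smooth on a slab $]1-\delta,1[$ that reaches $z^*$. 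The paper's argument is different and correct: use~\eqref{vinftycompactness} (plus Poincar\'e for the pressure) to show $v^\infty$ is small in $L_3$ on unit parabolic cylinders \emph{far from the origin}, hence smooth and decaying there by $\epsilon$-regularity; then backward uniqueness (as in~\cite{barkerser16blowup}) on this exterior region, together with spatial unique continuation, forces $v^\infty\equiv 0$ near $t=1$, contradicting the singularity. The large-distance decay, not a slab bound, is what feeds the Carleman machinery.

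A minor additional point: partial regularity alone does not convert ``$\esssup|v_n|=\infty$'' into ``$v_n$ has a singular point'' on an \emph{unbounded} domain; you also need the $L_{10/3}$ and pressure control to rule out smooth-but-unbounded behavior at spatial infinity. This is easy to supply (absolute continuity of the $L_{10/3}$ norm plus $\epsilon$-regularity), but as written the reduction is incomplete.
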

\begin{proof}[Sketch of proof]
Otherwise, there exists a sequence of solutions on $\Omega_\infty \times ]1/2,1[$ satisfying~\eqref{vinftycompactness} which is converging to zero at $t=1$ and become progressively more singular. After translating in space and passing to a subsequence, 
one obtains a singular boundary suitable weak solution in $\R^3_+ \times ]1/2,1[$ (or $\R^3 \times ]1/2,1[$, depending on $\Omega_\infty$ and the translations). The solution is then controlled at large distances using~\eqref{vinftycompactness} and the $\epsilon$-regularity criterion (here, the pressure is controlled in $L_{\frac{3}{2}}$ on balls, up to its average, by Poincar{\'e}'s inequality). This is enough to apply backward uniqueness. The arguments are similar to those in~\cite[p. 1345-1349]{barkerser16blowup} by Seregin and the second author, for example.
\end{proof}

To apply the Liouville theorem, we set $M_\infty = C_4 + C_5$ from~\eqref{vinfinity10/3}-\eqref{qinfinityspaces} and choose $\epsilon = \epsilon_\infty/2$ and $\Omega^\infty = \R^3_+$.
Uniform bounds 1)-3) in the previous step imply (up to a subsequence) that
\begin{equation}\label{strongconverg}
\hat{v}^{(k)} \rightarrow v^{\infty} \text{ in } C\left([1/2, 1]; L_{\frac{9}{8}}(B^{+}(R))\right) \text{ for all } R>0.
\end{equation}
This means that $v^\infty(\cdot,1) = X^\infty$, where $\norm{X^\infty}_{L_3^\weak} \leq 2\epsilon$. Hence, the hypotheses of Lemma~\ref{lem:Liouvilletheorem} are satisfied, and $v^\infty$ is essentially bounded in $\R^3_+ \times ]3/4,1[$. This contradicts~\eqref{singpointflattenedvinfinity}. 
\end{proof}

\begin{remark}[The interior case]
\label{rmk:interiorcase}
If $x^* \in \Omega$, we do not need to flatten the boundary. Notice that the same \emph{a priori} estimates hold (we derived them before flattening), and the rescaled solutions converge to a suitable weak solution on $\R^3 \times ]1/2,1[$ satisfying~\eqref{vinftycompactness}, small in $L_3^\weak$ at the time $t=1$, and with singularity at the space-time origin. This is enough to apply Lemma~\ref{lem:Liouvilletheorem} and obtain a contradiction.
\end{remark}

\section{Mild bounded ancient solutions}
\label{sec:mbas}

In this section, we will prove Theorem~\ref{thm:mbas}, following the scheme mentioned in the introduction.

To simplify notation, we use the convention that the constants $C$ may implicitly depend on the domain $\Omega$ but are independent of translation, rotation, and rescaling.

\subsection{Truncation and rescaling}
\subsubsection*{Step 1. Truncation procedure}

Once we apply Proposition~\ref{pro:truncation} and rescale appropriately, we have the following situation (where we have replaced $V$ by $v$, for simplicity):
\begin{equation}
	v \in L_\infty(\Omega \times ]-1,t[) \text{ for all } t \in ]-1,0[
\end{equation}
is the unique weak Leray-Hopf solution of the Navier-Stokes equations on $\Omega \times ]-1,0[$ with initial data $v(\cdot,-1)$ and forcing term
\begin{equation}
	 f \in L_{p,\frac{3}{2}}(\Omega \times ]-1,0[) \text{ for all } p \geq 1.
\end{equation}
Furthermore, there exists $x^* \in \Omega \cup \Gamma$ such that
\begin{equation}
	z^* = (x^*,0) \text{ is a singular point of } v
\end{equation}
with the following property. Define
\begin{equation}
	\label{eq:gdef}
	g(t) := \esssup_{-1<t'<t} \norm{u(\cdot,t)}_{L_\infty(\Omega)}, \quad t \in ]-1,0].
\end{equation}
There exists a sequence of points $(z_n)_{n \in \N} = (x_n,t_n)_{n \in \N} \subset \Omega \times ]-1,0[$ with $z_n \to (x^*,0)$ and
\begin{equation}
	1 \leq  M_n := g(t_n) = |v(z_n)| \to \infty \text{ as } n \to \infty.
\end{equation}
In principle, the singular point $z^*$ may be different from the original singular point. If the original (untruncated) solution has an interior singularity, then we may further assume that $x^* \in \Omega$.

\subsubsection*{Step 2. Rescaling procedure}

Consider $\tilde{x_n} \in \p\Omega$ minimizing the distance to $x_n$:
\begin{equation}
	|x_n - \tilde{x_n}| = \dist(x_n,\p\Omega).
\end{equation}
Because $\Omega$ is a bounded $C^3$ domain, there exist a translation and a rotation $\cO_n \in {\rm SO}(3)$ of the original coordinate system\footnote{Translate first and rotate second.} such that, in the new coordinate system, $\tilde{x_n}$ becomes the origin, $\Omega$ becomes $\tilde{\Omega}_n$, and
\begin{equation}
	B(R_0) \cap \tilde{\Omega}_n = \left\lbrace |x| < R_0 : x_3 > \varphi_n(x') \right\rbrace.
\end{equation} 
Here, $\varphi_n \in C^2(\overline{K(R_0)})$ is real-valued function with
\begin{equation}
	\varphi_n(0) = 0, \; \nabla \varphi_n(0) = 0, \text{ and } [\varphi_n]_{C^2(K(R_0))} \leq N_0,
\end{equation}
where
the positive constants $R_0$ and $N_0$ depend only on $\Omega$.  

We rescale about $z_n$ as follows:
\begin{equation}
	v_n(y,s) =  \frac{1}{M_n} \cO_n v \left(\frac{\cO_n^{-1} y}{M_n} +x_n, \frac{s}{M_n^2} +t_n \right),
\end{equation}
where $(y,s) \in Q_n$. Here,
\begin{equation}
	Q_n = \Omega_n \times ]-M_n^2,0[, \quad \Omega_n = M_n \cO_n (\Omega - x_n).
\end{equation}
In the new coordinates, $x_n$ corresponds to $y=0$.
 Moreover,
 \begin{equation}
	|v_n| \leq 1 \text{ on } Q_n,
\end{equation}
 and
\begin{equation}
	|v_n(0)| = 1.
 \end{equation}
By weak-strong uniqueness, $v_n$ is the unique weak Leray-Hopf solution on $Q_n$ with initial data $v_n(\cdot,-M_n^2)$ and forcing term
\begin{equation}
	f_n(y,s) = \frac{1}{M_n^3} \cO_n f \left(\frac{\cO_n^{-1} y}{M_n} +x_n, \frac{s}{M_n^2} +t_n \right). 
\end{equation}
Moreover, $f_n$ converges to zero in certain subcritical norms:
\begin{equation}
	\label{fntozero}
	\norm{f_n}_{L_{p,\frac{3}{2}}(Q_n)} \to 0 \text{ for all } p > \frac{9}{5}.
\end{equation}
We extend $v_n$ by zero to a vector field on $\R^3 \times ]-\infty,0[$. This implies
\begin{equation}
	v_n \wstar U \text{ in } L_\infty(\R^3 \times ]-\infty,0[)
\end{equation}
along a subsequence, for a measurable vector field $U \: \R^3 \times ]-\infty,0[ \to \R^3$.

Let us denote $a_n = \dist(0,\p\Omega_n)$.\footnote{Notice that $a_n = M_n \dist(x_n,\p\Omega)$.} For a subsequence, we have one of the following:

\subsubsection*{Step 2A: Scenario I} 
If
\begin{equation}
	\dist(0,\p\Omega_n) \upto \infty,
\end{equation}
then $\Omega_n \to \R^3$ in a suitable sense. In particular, there exists a subsequence satisfying
\begin{equation}
	B(n) \subset \Omega_n \text{ for all } n \in \N.
\end{equation}


\subsubsection*{Step 2B: Scenario II}
If
\begin{equation}
	\lim_{n \to \infty} \dist(0,\p\Omega_n) = a \geq 0,
\end{equation}
then $\Omega_n \to \R^3_{a}$ in a suitable sense, where
\begin{equation}
	\R^3_{a} := \left\lbrace x \in \R^3 : x_3 > -a \right\rbrace.
\end{equation}
Necessarily, $x^* \in \Gamma$ and $\tilde{x_n} \in \Gamma$ for all sufficiently large $n$. Therefore, for a subsequence,\footnote{Here, we use the following fact concerning bounded $C^2$ domains: There exists a neighborhood $\cN$ of $\p\Omega$ such that for each $x_0 \in \cN \cap \Omega$, there exists a unique $\tilde{x_0} \in \p\Omega$ minimizing $\dist(x_0,\p\Omega)$. Moreover, the vector $\tilde{x_0}-x_0$ is in the direction $\nu(x_0^*)$, where $\nu$ denotes the outer unit normal. Similar statements can be found in \cite[Section 4.4]{krantzimplicit}.
}
\begin{equation}
	\frac{\tilde{x_n}-x_n}{|x_n - \tilde{x_n}|} = \nu(\tilde{x_n}).
\end{equation}
Hence, in the new coordinates, $\tilde{x_n}$ corresponds to $y = -a_ne_3$.

Denote $\tilde{\varphi}_n = M_n \varphi(\cdot/M_n)$. In the new coordinates, whenever $0 < R \leq M_n R_0$,
\begin{equation}
	B(-a_n e_3,R) \cap \Omega_n = \left\lbrace |y+a_ne_3| < R : y_3 + a_n > \tilde{\varphi}_n(y') \right\rbrace.
\end{equation}
  In addition,
 \begin{equation}
	[\tilde{\varphi}_n]_{C^2(K(R))} \leq N_0/M_n.
 \end{equation}
 Consider $R_n = M_n^{\frac{1}{4}} R_0 \to \infty$.\footnote{In the original coordinates, this corresponds to a ball shrinking around the origin, but it is growing in the new coordinates.} By Taylor's theorem,
 \begin{equation}
	|y'| < R_n \text{ implies } |\tilde{\varphi}_n| \leq \frac{N_0}{2 M_n} |y'|^2 \leq \frac{N_0 R_0^2}{2 \sqrt{M_n}}.
 \end{equation}
 Therefore, in a growing ball, $\Omega_n$ contains the region above $\{ y_3 = c_n \}$ for a sequence $c_n \dto a$:\footnote{For example, $c_n = \max(a_n,a) + N_0 R_0^2/(2 \sqrt{M_n})$.}
 \begin{equation}
	E_n := B(-a_ne_3,R_n) \cap \left\lbrace y_3  > c_n \right\rbrace \subset \Omega_n.
 \end{equation}
 Similarly, the complement contains the region below $\{ y_3 = c_n' \}$ for a sequence $c_n' \upto a$:\footnote{$c_n' = \min(a_n,a) - N_0 R_0^2/(2 \sqrt{M_n})$}
 \begin{equation}
	F_n := B(-a_ne_3,R_n) \cap \left\lbrace y_3  < c_n' \right\rbrace \subset \R^3 \setminus \overline{\Omega_n}.
 \end{equation}
There exists a subsequence such that $B(-a_n e_3,R_n) \subset B(-a_{n+1} e_3, R_{n+1})$. Then
	\begin{equation}
	\label{eq:Endef}
	E_n \subset E_{n+1}, \quad \bigcup_{n \in \N} E_n = \R^3_a.
	\end{equation}
	Eventually, we will use~\eqref{eq:Endef} to obtain that $U$ solves the Navier-Stokes equations in $\R^3_a$. 
Also,
\begin{equation}
	F_n \subset F_{n+1}, \quad \bigcup_{n \in \N} F_n = \R^3 \setminus \overline{\R^3_a}.
\end{equation}
Finally, $u_n \equiv 0$ on $F_n \times ]-\infty,0[$ implies
\begin{equation}
	U \equiv 0 \text{ on } \R^3 \setminus \R^3_a.
\end{equation}


\subsection{H{\"o}lder estimates for rescaled truncated solution}

\subsubsection*{Step 3. Showing $U$ is non-trivial (H{\"o}lder estimates)}

In this section, we prove H{\"o}lder estimates for the sequence $(v_n)_{n \in \N}$ described above. Recall that $v_n$ is extended by zero to $\R^3 \times ]-\infty,0[$.

\begin{pro}[H{\"o}lder estimates]
	\label{pro:Holderestimates}
	In the above notation, for all $A>0$,
	\begin{equation}
	\limsup_{n \to \infty} \norm{v_n}_{C^{\frac{1}{2}}_\para(\R^3 \times ]-A,0])} < \infty.
	\end{equation}
\end{pro}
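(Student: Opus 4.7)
The plan is to represent $v_n$ on $\Omega_n \times \,]t_0, 0[$ as a mild solution from an earlier time $t_0 := -A - 1$, which is admissible for $n$ large since $M_n \to \infty$. By the uniqueness statement in Proposition~\ref{pro:truncation} (applied to the rescaled solution), $v_n$ is the unique weak Leray-Hopf solution on this interval with the given bounded initial data and forcing $f_n$, so it coincides with the mild solution
\begin{equation}
v_n(t) = e^{-(t-t_0)\mathbb{A}_n} v_n(t_0) - \int_{t_0}^{t} e^{-(t-s)\mathbb{A}_n} \mathbb{P}_n \div(v_n \otimes v_n)(s)\, ds + \int_{t_0}^{t} e^{-(t-s)\mathbb{A}_n} \mathbb{P}_n f_n(s)\, ds,
\end{equation}
where $\mathbb{A}_n$ is the Stokes operator and $\mathbb{P}_n$ the Helmholtz projection on $\Omega_n$.

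The core of the argument invokes the $L_\infty$ Stokes semigroup estimates of K.~Abe and Y.~Giga,
\begin{equation}
\norm{\nabla e^{-\tau \mathbb{A}_n} g}_{L_\infty(\Omega_n)} \leq C \tau^{-1/2} \norm{g}_{L_\infty(\Omega_n)},\quad \norm{\nabla e^{-\tau \mathbb{A}_n} \mathbb{P}_n \div F}_{L_\infty(\Omega_n)} \leq C \tau^{-1/2} \norm{F}_{L_\infty(\Omega_n)},
\end{equation}
with constants $C$ \emph{independent of $n$}. Differentiating the mild formula and using $|v_n| \leq 1$, for $t \in \,]-A,0]$ (hence $t - t_0 \geq 1$), the linear term is bounded by $C$, the bilinear term by $\int_{t_0}^t (t-s)^{-1/2}\, ds \leq C\sqrt{A+1}$, and the forcing term by a standard $L_\infty$-smoothing argument combined with~\eqref{fntozero}. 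This yields $\norm{\nabla v_n}_{L_\infty(\Omega_n \times \,]-A,0])} \leq C(A)$ uniformly for $n$ large.

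For the time regularity, I would write
\begin{equation}
v_n(t) - v_n(s) = (e^{-(t-s)\mathbb{A}_n} - I)\, v_n(s) + \int_s^t e^{-(t-r)\mathbb{A}_n}\mathbb{P}_n \bigl[ f_n - \div(v_n \otimes v_n) \bigr](r)\, dr
\end{equation}
for $-A \leq s \leq t \leq 0$, and combine the semigroup difference bound $\norm{(e^{-\tau \mathbb{A}_n} - I) v_n(s)}_{L_\infty} \leq C \sqrt{\tau}\, \norm{\nabla v_n(s)}_{L_\infty}$ with the integrable $(t-r)^{-1/2}$ singularity to conclude $\norm{v_n(t) - v_n(s)}_{L_\infty(\Omega_n)} \leq C \sqrt{|t-s|}$. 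Because $v_n$ vanishes on $\p\Omega_n$ with uniformly bounded spatial gradient up to the boundary, the zero extension of $v_n$ to $\R^3$ is Lipschitz in space (a fortiori $\tfrac{1}{2}$-H\"older) and $\tfrac{1}{2}$-H\"older in time, both with constants independent of $n$; combining these yields the desired parabolic H\"older estimate. The main obstacle is verifying that the Abe-Giga gradient bounds hold with constants that do \emph{not} degenerate as $\mathrm{diam}(\Omega_n) \to \infty$; securing this uniformity is precisely the reason for adopting the Abe-Giga perturbation framework rather than relying on (unavailable) explicit kernel representations for general domains.
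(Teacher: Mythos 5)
Your mild-solution Duhamel strategy is the right spirit, and the decision to invoke the Abe--Giga $L_\infty$ semigroup theory (whose constants are uniform over the rescaled domains $\Omega_n$) is exactly the reason the paper works with this framework. But the argument as written has a genuine gap in the bilinear term: the estimate
\begin{equation}
\norm{\nabla e^{-\tau \mathbb{A}_n} \mathbb{P}_n \div F}_{L_\infty(\Omega_n)} \leq C \tau^{-1/2} \norm{F}_{L_\infty(\Omega_n)}
\end{equation}
is not one of the available Abe--Giga bounds, and it is false (even in $\R^3$, where the Oseen kernel gives $\norm{\nabla e^{\tau \Delta} \mathbb{P} \div}_{L_\infty \to L_\infty} \sim \tau^{-1}$, which is non-integrable). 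The estimate the paper actually uses (Lemma~\ref{lem:subpressurelem}, citing~\cite{abestokesflow}) is $\norm{\nabla S(\tau)\bP\div G}_{L_\infty} \le C\tau^{-3/4}\norm{G}_{L_\infty}^{1/2}\norm{\nabla G}_{L_\infty}^{1/2}$, which is integrable in $\tau$ but requires control of $\nabla F = \nabla(v_n \otimes v_n)$, i.e., the very quantity you are trying to bound. Consequently, the Duhamel formula over the long interval $]t_0, 0]$ with $t_0 = -A-1$ cannot close directly: the argument is circular, and there is no uniform gradient bound on a time interval of length $\gg 1$ that one can propagate naively.

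The paper resolves this by first invoking Abe's short-time well-posedness theory for $C_{0,\sigma}$ initial data (Proposition~\ref{pro:AbeGiga}), whose lifespan $T \gtrsim \norm{u_0}_{L_\infty}^{-2} \geq C$ is bounded below uniformly because $|v_n| \leq 1$; the fixed-point contraction there absorbs exactly the $\tau^{-3/4}$ singularity and yields the gradient and H\"older bounds on $[0,T_\sharp]$. One then handles the forcing $f_n$ as a perturbation (Proposition~\ref{pro:perturbedNSE} and Corollary~\ref{cor:HolderperturbedNSE}), splits $v_n = u_n + w_n$ (Lemma~\ref{lem:Holdercontdecomp}), and covers $[-A_\sharp,0]$ by overlapping intervals of length $T_\sharp$. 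Your proposal collapses all of this into a single Duhamel bound over $]{-A-1},0]$, which is precisely what the circularity prevents. A second, smaller point: you invoke ``uniqueness in Proposition~\ref{pro:truncation}'' to identify $v_n$ with its mild formula, but the relevant weak--strong uniqueness is supplied by Point 3 of Proposition~\ref{pro:AbeGiga} and Point 3 of Proposition~\ref{pro:perturbedNSE}; Proposition~\ref{pro:truncation} is used earlier, before rescaling, to set up the framework.
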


As an immediate corollary, we have
\begin{corollary}[Compactness]
	\label{cor:Holderestimates}
	There exists a subsequence such that
	\begin{equation}
	v_n \to U \text{ in } C(K \times ]-A,0])
	\end{equation}
	for all compact $K \subset \R^3$ and $A > 0$. Hence, $|U(0)| = 1$. In the case of Scenario~II,\footnote{In particular, $a > 0$.}
	\begin{equation}
	U\big|_{\p{\R^3_{a}}}(\cdot,t) = 0 \text{ for all } t \leq 0.
	\end{equation}
\end{corollary}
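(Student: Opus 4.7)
The plan is to reduce the target $C^{1/2}_\para$ bound to uniform-in-$n$ parabolic Sobolev estimates $v_n \in W^{2,1}_{p,3/2}$ on compact cylinders (for large $p$) and then invoke the parabolic Sobolev embedding of Appendix~\ref{sec:parabolicsobolev}. The core difficulty is uniformity in $n$: standard interior Navier--Stokes regularity is not enough because in Scenario~II the limiting singularity lies on $\partial \R^3_a$, and in both scenarios the domains $\Omega_n$ are expanding, so one cannot afford constants that depend on $|\Omega_n|$. This is exactly what Abe's $L_\infty$ perturbation theory for mild solutions in bounded $C^3$ domains \cite{AbeNSEBounded} together with the Abe--Giga boundary-distance weighted pressure estimates \cite{AbeGiga} provide: their constants depend on $\Omega_n$ only through a $C^3$-norm of the defining functions of $\partial\Omega_n$, and this norm is controlled uniformly in $n$ by the $C^3$-regularity of the original $\Omega$ together with the rescaling by $M_n \geq 1$.

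Fix $A > 0$ and take $n$ large enough that $M_n^2 > A + 2$. By weak-strong uniqueness (Proposition~\ref{pro:truncation}), $v_n$ on $[-A-1, 0]$ is the unique bounded Leray-Hopf solution emanating from the $L_\infty$ initial datum $v_n(\cdot, -A-1)$ with body force $f_n$. Split $v_n = L_n + r_n$, where $L_n$ is the Stokes evolution of the initial datum. Abe's analytic semigroup estimates furnish
\begin{equation*}
\|L_n(\cdot, t)\|_{L_\infty(\Omega_n)} \leq 1, \qquad \|\nabla L_n(\cdot, t)\|_{L_\infty(\Omega_n)} \leq C (t+A+1)^{-1/2},
\end{equation*}
with $C$ uniform in $n$, while the Abe--Giga weighted bound controls the associated pressure without introducing domain-dependent constants. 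The remainder $r_n$ then solves an inhomogeneous Stokes system with zero initial data and right-hand side $f_n - \div(v_n \otimes v_n)$, which is uniformly bounded in $L_{p,3/2}$ on compact parabolic cylinders in $\R^3 \times ]-A-\tfrac{1}{2}, 0[$ thanks to $|v_n| \leq 1$, the Abe gradient bound on $L_n$, a bootstrap for $\nabla r_n$, and the subcritical decay \eqref{fntozero} of $f_n$.

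Applying $L_{p,3/2}$ Stokes maximal regularity in $\Omega_n$ with homogeneous Dirichlet data (again with constants depending only on the uniform $C^3$-control of $\partial\Omega_n$) yields uniform $W^{2,1}_{p,3/2}$-bounds for $r_n$ on each compact cylinder of $\R^3 \times ]-A, 0]$; combined with the $L_\infty$ gradient bound for $L_n$, Appendix~\ref{sec:parabolicsobolev} furnishes the claimed $C^{1/2}_\para$-bound for $v_n$ restricted to $\overline{\Omega_n} \times ]-A, 0]$. The extension of $v_n$ by zero to $\R^3 \times ]-A, 0]$ remains $C^{1/2}_\para$ across $\partial \Omega_n$ because $v_n$ vanishes on the boundary and $\nabla v_n$ is uniformly bounded up to it, so $|v_n(x,t)| \leq C \,\dist(x, \partial \Omega_n)$ in a one-sided neighbourhood of $\partial \Omega_n$, which gives a Lipschitz (hence $1/2$-H\"older) matching with the zero extension.

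The principal obstacle is the pressure generated by the nonlinear term $\div(v_n \otimes v_n)$: it carries no homogeneous boundary condition, so a naive Calder{\'o}n--Zygmund argument would introduce constants that degenerate as $\Omega_n \uparrow \R^3$ or $\R^3_a$. This is resolved precisely by the distance-to-boundary weighted pressure estimates of Abe--Giga, which keep every step scale-invariant and domain-uniform. A subsidiary technical point is to verify that the decay \eqref{fntozero} with $p > 9/5$ is indeed strong enough to make the forcing $f_n$ harmless within the maximal-regularity scheme; this is the mechanism through which the forcing produced by the truncation in Proposition~\ref{pro:truncation} disappears in the blow-up limit.
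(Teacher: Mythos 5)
Your overall plan (establish a uniform parabolic H\"older bound, then deduce the corollary by Arzel\`a--Ascoli, and get the no-slip condition from the Lipschitz matching of $v_n$ with its zero extension across $\p\Omega_n$) is the same as the paper's, which proves the corollary as an immediate consequence of Proposition~\ref{pro:Holderestimates}. The boundary-matching argument via $|v_n(x,t)| \leq C\,\dist(x,\p\Omega_n)$ is fine. However, there is a genuine gap in how you propose to obtain the uniform H\"older estimate, and it traces back to your choice of decomposition.

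You split $v_n = L_n + r_n$ with $L_n$ the \emph{linear} Stokes evolution of the initial datum. The equation for the remainder then carries the full nonlinearity as a source:
\begin{equation*}
\p_t r_n - \Delta r_n + \nabla p_n = f_n - \div(v_n \otimes v_n), \qquad r_n(\cdot,0) = 0.
\end{equation*}
To run maximal $L_{p,3/2}$ regularity over $\Omega_n$ you would need a uniform-in-$n$ bound on $\div(v_n \otimes v_n)$, or at least on $v_n \otimes v_n$, in $L_{p,3/2}(\Omega_n \times I)$. But $|v_n| \leq 1$ only gives $\norm{v_n \otimes v_n}_{L_p(\Omega_n)} \sim |\Omega_n|^{1/p} \to \infty$, so for finite $p$ the global estimate diverges as the domains expand. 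You acknowledge this implicitly by restricting to ``compact parabolic cylinders,'' but then you are asking for \emph{local} maximal regularity for the Stokes system, and because the pressure is nonlocal this requires truncation arguments with corrections that do not obviously close; the bootstrap for $\nabla r_n$ that you invoke is exactly what needs to be set up, and the first iterate only yields $\nabla r_n \in L_p^{\loc}$ without a handle on the pressure. In short, the nonlinearity is not a perturbation of $L_n$, so neither a smallness argument nor a global $L_p$ scheme is available, and the local scheme is underspecified.

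The paper sidesteps all of this by using Abe's nonlinear theorem (Proposition~\ref{pro:AbeGiga}): the ``big'' part $u_n$ is not the linear Stokes flow but the solution of the \emph{full} Navier--Stokes equations in $\Omega_n$ with the $C_{0,\sigma}$ initial datum $v_n(\cdot,t_0)$ and zero force. Abe's theorem gives scale-invariant $L_\infty$ H\"older bounds for $u_n$ directly; crucially, the term $u_n \cdot \nabla u_n$ lives \emph{inside} $u_n$'s equation rather than appearing as forcing. The remainder $w_n = v_n - u_n$ then solves a \emph{perturbed} NSE (Proposition~\ref{pro:perturbedNSE}) with coefficients $V=u_n$, $W=\nabla u_n$ in $L_\infty$ and forcing exactly $f_n$, which \emph{is} small and uniformly in $L_{p,3/2}(\Omega_n)$ by~\eqref{fntozero}. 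The quadratic term $w_n \cdot \nabla w_n$ is then controlled by smallness, and no uniform global $L_p$ bound on $v_n \otimes v_n$ is ever needed. If you replace your linear decomposition by this nonlinear one, the scheme closes; as written, it does not.
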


Let $C_{0,\sigma}(\Omega)$ denote the space of divergence-free vector fields $u_0$ continuous in $\overline{\Omega}$ and vanishing on the boundary. Bootstrapping $v \in L_{\infty}(\Omega \times ]-1,t[)$ via maximal regularity and parabolic Sobolev embedding (see Lemma~\ref{lem:parabolicsobolev}), we obtain that $v \in C([-1,t];C_{0,\sigma}(\Omega))$, for all $t \in ]-1,0[$. Therefore, the following result of Abe~\cite{AbeNSEBounded} will be applicable:

\begin{pro}[NSE with $C_{0,\sigma}$ initial data]
	\label{pro:AbeGiga}
	Let $\Omega \subset \R^3$ be a bounded $C^3$ domain.

	\begin{enumerate}[1.]
	\item For all $u_0 \in C_{0,\sigma}(\Omega)$, there exists $T>0$ satisfying
\begin{equation}
\label{eq:Tlowerbound}
	T \geq \frac{C}{\norm{u_0}_{L_\infty(\Omega)}^2}
\end{equation}
	and a weak solution\footnote{We say nothing here about the pressure, so we mean a weak solution in the sense of divergence-free test functions.} $u \in C(\overline{Q_T})$ of the Navier-Stokes equations in $Q_T$ with initial data $u_0$:
	\begin{equation}
	\label{eq:mildlinftyNSE}
	\left\lbrace
	\begin{aligned}
	\p_t u - \Delta u + \div u \otimes u + \nabla p &= 0 & &\text{ in } Q_T \\
	\div u &= 0 & & \text{ in } Q_T\\
	u\big|_{\p\Omega} &= 0 & &\text{ in } \p\Omega \times ]0,T[ \\
	u(\cdot,0) &= u_0 & &\text{ in } \Omega.
	\end{aligned}
	\right.
	\end{equation}

		\item
	For all $0 < \alpha,\gamma < 1$, the solution $u$ satisfies the estimates\footnote{One may also obtain $\gamma/2$-H{\"o}lder continuity in time for $\nabla u$, but we will not need this here.}
	\begin{equation}
	\label{eq:AbeGigaspaceholder}
	\sup_{0<t<T} \left( \norm{u}_{L_\infty(\Omega)} + t^{\frac{1}{2}}  \norm{\nabla u}_{L_\infty(\Omega)} + t^{\frac{1+\alpha}{2}} [\nabla u]_{C^\alpha(\Omega)} \right) \leq C(\alpha) \norm{u_0}_{L_\infty(\Omega)},
	\end{equation}
	\begin{equation}
	\label{eq:AbeGigatimeholder}
	\sup_{x \in \Omega} [u]_{C^\gamma([B,T])} \leq C(\gamma,T/B) \left(T^{-\gamma} + B^{-\gamma} \right) \norm{u_0}_{L_\infty(\Omega)} \; \text{ for all } 0 < B < T.
	\end{equation}
	\item The solution $u$ is the unique weak Leray-Hopf solution of~\eqref{eq:mildlinftyNSE} in $Q_{T'}$ ($0 < T' \leq T$).
\end{enumerate}
\end{pro}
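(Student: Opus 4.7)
The plan is to follow the strategy of Abe and Giga for mild $L_\infty$ solutions. Recall that, for a bounded $C^3$ domain, Abe--Giga proved that the Stokes semigroup $e^{-tA}$ is analytic on $C_{0,\sigma}(\Omega)$, and that the Helmholtz decomposition enjoys a scaling-invariant gradient estimate for the Stokes pressure $\pi(t) = \pi(u_0,t)$:
\begin{equation}
	t^{\frac{1}{2}} \sup_{x \in \Omega} \dist(x,\p\Omega) |\nabla \pi(x,t)| \leq C \norm{u_0}_{L_\infty(\Omega)}.
\end{equation}
Combined with the standard estimate $\norm{\nabla e^{-tA} u_0}_{L_\infty} \leq C t^{-\frac{1}{2}} \norm{u_0}_{L_\infty}$ (cf.~\cite{AbeGiga,abestokesflow}), this is the linear input needed to run a mild-solution argument at the $L_\infty$ level.

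First, I would set up the mild formulation
\begin{equation}
	u(t) = e^{-tA} u_0 - \int_0^t e^{-(t-s)A} \mathbb P \div( u \otimes u)(s) \, ds
\end{equation}
and run a Picard iteration in the space $X_T$ equipped with the scaling-critical norm
\begin{equation}
	\norm{u}_{X_T} := \sup_{0 < t < T} \left( \norm{u(t)}_{L_\infty(\Omega)} + t^{\frac{1}{2}} \norm{\nabla u(t)}_{L_\infty(\Omega)} \right).
\end{equation}
The weighted pressure estimate yields, for the bilinear term $N(u,v)(t) := \int_0^t e^{-(t-s)A} \mathbb P \div(u \otimes v) \, ds$, the inequality $\norm{N(u,v)}_{X_T} \leq C T^{\frac{1}{2}} \norm{u}_{X_T} \norm{v}_{X_T}$. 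Hence, a contraction mapping argument produces a solution on $[0,T]$ provided $T^{\frac{1}{2}} \norm{u_0}_{L_\infty}$ is small enough, giving the lifespan bound~\eqref{eq:Tlowerbound}. Continuity down to $t=0$ in $C_{0,\sigma}(\Omega)$ is then inherited from the strong continuity of $e^{-tA}$ on $C_{0,\sigma}(\Omega)$ proved by Abe--Giga.

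Next, the higher spatial regularity~\eqref{eq:AbeGigaspaceholder} follows by bootstrapping in the mild formulation. Once $u \in X_T$, the nonlinearity $u \otimes u$ is bounded in $L_\infty$ and Lipschitz in space (with the $t^{-1/2}$ weight), so one iterates using the Schauder-type estimate $t^{\frac{1+\alpha}{2}}[\nabla e^{-tA} v]_{C^\alpha} \leq C \norm{v}_{L_\infty}$, which follows from the analyticity of the Stokes semigroup and Abe's interior/boundary Schauder estimates for the stationary Stokes problem with $L_\infty$ data. The temporal H\"older bound~\eqref{eq:AbeGigatimeholder} then follows from the equation $\p_t u = -Au + \mathbb P \div(u \otimes u)$ combined with interpolation between the $L_\infty$ bound on $u$ and the just-established $L_\infty$ bound on $\nabla u$ (this gives $\p_t u \in L_{\infty,\loc}$ away from $t=0$, which is more than enough for H\"older continuity on $[B,T]$).

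Finally, uniqueness among weak Leray--Hopf solutions in $Q_{T'}$ is standard weak-strong uniqueness: since $u \in L_\infty(0,T';L_\infty(\Omega))$, it satisfies Serrin's condition, and any Leray--Hopf solution with the same initial data must coincide with $u$ (see, e.g., \cite[Theorem~1.5.1]{Sohrbook}). The main obstacle is the nonlinear step, which depends crucially on the weighted pressure estimate; without the $\dist(x,\p\Omega)$ weight, $\nabla \pi$ need not be in $L_\infty$, and the iteration in $X_T$ breaks down. This is why one must invoke the full Abe--Giga machinery rather than a direct resolvent calculation.
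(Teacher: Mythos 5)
Your reconstruction of Points~1 and~2 largely duplicates what the paper simply cites: the authors appeal directly to \cite[Theorem~1.1 and Proposition~3.5]{AbeNSEBounded} for those parts. Re-deriving Abe's local theory is reasonable as a sanity check, but be aware that the bilinear estimate you assert, $\norm{N(u,v)}_{X_T}\leq C T^{1/2}\norm{u}_{X_T}\norm{v}_{X_T}$, does not follow from the semigroup gradient bound $\norm{\nabla e^{-tA}u_0}_{L_\infty}\leq Ct^{-1/2}\norm{u_0}_{L_\infty}$ alone. To close the $t^{1/2}\norm{\nabla u}_{L_\infty}$ component of the $X_T$ norm you need the sharper composite bound $\norm{\nabla e^{-tA}\bP\div F}_{L_\infty}\leq Ct^{-3/4}\norm{F}_{L_\infty}^{1/2}\norm{\nabla F}_{L_\infty}^{1/2}$ for $F$ with $F$ and $\nabla F$ vanishing on $\p\Omega$ (this is the estimate the paper records as \eqref{eq:Gest}), since there is no $L_\infty\to L_\infty$ bound for $\nabla e^{-tA}\bP\div$ on matrix data of low spatial regularity. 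Your sketch gestures at ``the weighted pressure estimate'' but this is what actually does the work.

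The genuine gap is in Point~3. You invoke weak-strong uniqueness on the grounds that $u\in L_\infty(0,T';L_\infty)$ satisfies Serrin's condition, but the version of weak-strong uniqueness in \cite[Theorem~1.5.1]{Sohrbook} (and all standard versions) compares two solutions \emph{both in the Leray--Hopf class}, so one must first establish that Abe's solution is itself a Leray--Hopf solution: namely $u\in C([0,T];L_2)\cap L_2(0,T;H^1_0)$ and $u$ satisfies the energy inequality from $t=0$. The second of these is not automatic from $u\in X_T$: the bound $\norm{\nabla u(t)}_{L_\infty}\leq C t^{-1/2}\norm{u_0}_{L_\infty}$ gives only $\norm{\nabla u}_{L_2(\Omega\times(A,T))}^2\lesssim\log(T/A)$, which diverges as $A\to 0^+$, so $\nabla u\in L_2(Q_T)$ must be obtained another way. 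The paper's proof does exactly this: it verifies $u\in C([0,T];L_2)\cap W^{1,0}_2(\Omega\times(A,T))$ for each $A>0$, justifies the energy \emph{equality} on $[A,T]$, and then sends $A\to 0^+$ to conclude $\nabla u\in L_2(Q_T)$ with the energy identity from $t=0$. That step is what makes the weak-strong comparison legitimate, and it is missing from your argument.
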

\begin{proof}
Points 1 and 2 are proven in~\cite[Theorem 1.1]{AbeNSEBounded}, except for the constant in~\eqref{eq:AbeGigatimeholder}, which is contained in the proof of \cite[Proposition 3.5]{AbeNSEBounded}. It is clear that Abe's solution belongs to
\begin{equation}
	C([0,T];L_2(\Omega)) \cap W^{1,0}_2(\Omega \times ]A,T[)
\end{equation}
for all $0 < A < T$. Moreover, it satisfies the energy equality on $\Omega \times [A,T]$ (one may justify the integration by parts computation, or refer to Theorem~1.4.1, p. 272, in~\cite{Sohrbook}). To obtain $\nabla u \in L_2(Q_T)$, we allow $A \to 0^+$ in the energy equality. Finally, Point 3 simply asserts weak-strong uniqueness (see Theorem~1.5.1, p. 276, in~\cite{Sohrbook} for a proof). \end{proof}

In principle, for exterior domains, the constants in the linear estimates required to prove Proposition~\ref{pro:AbeGiga} can depend on the time interval under consideration. For example, the constants in the linear estimates on a \emph{fixed} time interval could become large when zooming out on the domain. This is not the case for bounded domains, in which the semigroup is known to have exponential decay.

\begin{pro}[Perturbed NSE]
\label{pro:perturbedNSE}
	Let $\Omega \subset \R^3$ be a bounded $C^3$ domain.

	Let $p > 3$, $s, s_1 > 1$, and $s_2 > 2$. Let $V \in L_{\infty,s_2}(Q_1)$, $W \in L_{\infty,s_1}(Q_1;\R^{3\times3})$, and $f \in L_{p,s}(Q_1)$.
\begin{enumerate}[1.]

	\item There exists $c_0(\Omega,p,s,s_1,s_2) > 0$ satisfying the following property. If 
	\begin{equation}
	\label{VWfprop}
	\norm{V}_{L_{\infty,s_2}(Q_1)} + \norm{W}_{L_{\infty,s_1}(Q_1)} + \norm{f}_{L_{p,s}(Q_1)} \leq c_0,
	\end{equation}
	then there exists
	a weak solution $w \in C([0,1];L_p(\Omega)) \cap W^{1,0}_{p,2}(Q_1)$ of the following perturbed Navier-Stokes equations:
	\begin{equation}
	\label{eq:perturbedNSE}
	\left\lbrace
	\begin{aligned}
		\p_t w - \Delta w + w \cdot \nabla w + V \cdot \nabla w + w \cdot W + \nabla p &=  f & &\text{ in } Q_1\\
		\div w &= 0 & &\text{ in } Q_1\\
		w\big|_{\p'Q_T} &= 0 & &\text{ on } \p'Q_1.
		\end{aligned}
		\right.
	\end{equation}
	with forcing term $f$, zero Dirichlet conditions (in the sense of trace), and zero initial condition. Here, $\p'Q_1$ denotes the parabolic boundary.
	\item The solution $w$ satisfies the estimate
	\begin{equation}
	\label{westintermsoff}
	\norm{w}_{L_{p,\infty}(Q_1)} + \norm{\nabla w}_{L_{p,2}(Q_1)} \leq C \norm{f}_{L_{p,s}(Q_1)}.
	\end{equation}

	\item The solution $w$ is the unique weak Leray-Hopf solution of \eqref{eq:perturbedNSE} in $Q_{T'}$ ($0 < T' \leq 1$).

\end{enumerate}
\end{pro}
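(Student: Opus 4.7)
The plan is a Kato-type fixed-point argument using maximal regularity for the non-stationary Stokes system on the bounded $C^3$ domain $\Omega$. First, I would fix an auxiliary time exponent $r>1$ with $1/r \geq \max(1/s,\,1/s_1,\,1/s_2 + 1/2)$, which is possible precisely because $s,\,s_1 > 1$ and $s_2 > 2$. The Giga--Sohr maximal regularity theorem \cite{Gigasohr91}, already invoked in Section~\ref{sec:l3}, supplies, for each $F \in L_{p,r}(Q_1)$, a unique Stokes solution $(u,\pi)$ with zero Cauchy data on $\p' Q_1$ satisfying
\begin{equation*}
\norm{\p_t u}_{L_{p,r}(Q_1)} + \norm{\nabla^2 u}_{L_{p,r}(Q_1)} + \norm{\nabla \pi}_{L_{p,r}(Q_1)} \leq C(\Omega,p,r) \norm{F}_{L_{p,r}(Q_1)}.
\end{equation*}
Parabolic embedding in time together with the vanishing trace yields the comparison bound $\norm{u}_{L_{p,\infty}(Q_1)} + \norm{\nabla u}_{L_{p,2}(Q_1)} \leq C \norm{F}_{L_{p,r}(Q_1)}$, which will eventually deliver \eqref{westintermsoff}.

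Second, I would work in the Banach space
\begin{equation*}
X := \left\{ w : w \in L_{p,\infty}(Q_1),\ \nabla w \in L_{p,2}(Q_1),\ \div w = 0,\ w|_{\p'Q_1} = 0 \right\},
\end{equation*}
normed by $\norm{w}_X = \norm{w}_{L_{p,\infty}(Q_1)} + \norm{\nabla w}_{L_{p,2}(Q_1)}$, and define $\mathcal{T}\colon w \mapsto u$, where $u$ solves the Stokes problem above with forcing $F[w] := f - w\cdot\nabla w - V \cdot \nabla w - w \cdot W$. Because $p > 3$ and $w$ vanishes on $\p\Omega$, the Sobolev embedding $W^{1,p}(\Omega) \hookrightarrow L^\infty(\Omega)$ combined with Poincar\'e's inequality yield the auxiliary control $\norm{w}_{L_{\infty,2}(Q_1)} \leq C \norm{w}_X$. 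H\"older's inequality in space and time---using the interpolation of $w$ between $L_{p,\infty}$ and $L_{\infty,2}$ to gain strict time integrability $r>1$ for the nonlinearity---produces
\begin{equation*}
\norm{w \cdot \nabla w}_{L_{p,r}(Q_1)} \leq C \norm{w}_X^2, \quad \norm{V \cdot \nabla w}_{L_{p,r}(Q_1)} \leq C\norm{V}_{L_{\infty,s_2}}\norm{w}_X,
\end{equation*}
\begin{equation*}
\norm{w \cdot W}_{L_{p,r}(Q_1)} \leq C\norm{W}_{L_{\infty,s_1}}\norm{w}_X, \quad \norm{f}_{L_{p,r}(Q_1)} \leq C \norm{f}_{L_{p,s}(Q_1)},
\end{equation*}
all with the same $r$ thanks to the initial choice.

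Combining these estimates with the linear bound gives $\norm{\mathcal{T}(w)}_X \leq C_*\bigl(\norm{w}_X^2 + c_0 \norm{w}_X + \norm{f}_{L_{p,s}(Q_1)}\bigr)$ together with a symmetric difference estimate for $\mathcal{T}(w_1) - \mathcal{T}(w_2)$. Choosing $c_0 = c_0(\Omega,p,s,s_1,s_2)$ sufficiently small, $\mathcal{T}$ becomes a contraction on the closed ball of $X$ of radius $2C_* \norm{f}_{L_{p,s}(Q_1)}$. Its fixed point is the desired weak solution $w$, which automatically satisfies~\eqref{westintermsoff}, lies in $C([0,1];L_p(\Omega)) \cap W^{1,0}_{p,2}(Q_1)$ by the linear Stokes regularity, and is regular enough (Serrin-type class, again using $p>3$) for the standard weak--strong uniqueness argument (e.g.\ \cite[Theorem~1.5.1]{Sohrbook}) to deliver Point~3 on any subinterval $]0,T'[$. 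I expect the main obstacle to be the bookkeeping for the nonlinear term $w\cdot\nabla w$: its natural time integrability sits at the endpoint $r=1$, and one must use $p>3$ carefully to interpolate $w \in L_{p,\infty} \cap L_{\infty,2}$ and gain the strict inequality $r>1$ demanded by the maximal regularity step.
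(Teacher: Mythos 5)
Your overall Kato-type fixed-point structure is a reasonable route, but the key bilinear estimate has a genuine gap, and you have identified the danger spot yourself without actually resolving it. You propose to estimate $\norm{w \cdot \nabla w}_{L_{p,r}(Q_1)}$ with $r>1$ by ``interpolation of $w$ between $L_{p,\infty}$ and $L_{\infty,2}$.'' That interpolation cannot yield $r>1$. Since $\nabla w$ lives only in $L^2_t L^p_x$, the spatial H\"older splitting for the $L^p_x$ norm of $w\cdot\nabla w$ forces you to pay for $w$ in $L^\infty_x$; and the mixed-norm interpolation between $L^\infty_t L^p_x$ and $L^2_t L^\infty_x$ hits $L^q_t L^\infty_x$ only at $q=2$. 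H\"older in time then gives $1/r = 1/2 + 1/2 = 1$, i.e.\ exactly the forbidden endpoint $r=1$ where Giga--Sohr maximal regularity breaks down. So the step as written does not close. The fix is a \emph{spatial} Gagliardo--Nirenberg interpolation at each fixed time, $\norm{w(\cdot,t)}_{L^\infty_x} \leq C\norm{w(\cdot,t)}_{L^p_x}^{1-3/p}\norm{\nabla w(\cdot,t)}_{L^p_x}^{3/p}$, which upgrades $w \in L^{2p/3}_t L^\infty_x$ (not merely $L^2_t L^\infty_x$), and then H\"older in time gives $w\cdot\nabla w \in L^r_t L^p_x$ with $r = 2p/(p+3) > 1$; that is not the interpolation you described.

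The paper avoids the issue altogether by a different choice of norms: it estimates the nonlinearity in $L_{\frac{p}{2},2}(Q_1)$ (trivially, $L^\infty_t L^p_x \times L^2_t L^p_x \to L^2_t L^{p/2}_x$ with no time-integrability gain needed) and proves the linear bound $\norm{Lg}_X \leq C\norm{g}_{L_{\frac{p}{2},2}(Q_1)}$ directly from the $L^{p/2}\to L^p$ spatial smoothing of the Stokes semigroup plus Young/HLS in time, using $p>3$. This sidesteps the endpoint entirely; Giga--Sohr maximal regularity (which wants the forcing in $L_{p,r}$ with $r>1$) is not needed for the quadratic term, only for the lower-order and forcing terms where the gain $s,s_1 > 1$, $s_2 > 2$ is available. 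If you keep your maximal-regularity framework you must replace the interpolation step as above; otherwise you should switch to the semigroup smoothing estimate for the bilinear term as in the paper. The rest of your sketch (fixed-point set-up, linear comparison bound via $u(\cdot,0)=0$, $p>3$ giving the Serrin class for weak--strong uniqueness) is consistent with the paper's argument.
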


\begin{proof}
The proof is largely routine; we include it for completeness.

For a vector field $g$ on $Q_1$, we use the notation $Lg$ to refer to the unique weak solution $u$ of the Stokes equations
\begin{equation}
\left\lbrace
	\begin{aligned}
	\p_t u - \Delta u + \nabla p &= g & &\text{ in } Q_1 \\
	\div u &= 0 & &\text{ in } Q_1\\
	u &= 0 & &\text{ on } \p'Q_1,
	\end{aligned}
	\right.
\end{equation}
when such a solution exists. Also, we define the function space $X$:
\begin{equation}
	X = C([0,1];L_p(\Omega)) \cap W^{1,0}_{p,2}(Q_1),
\end{equation}
\begin{equation}
	\norm{u}_{X} = \norm{u}_{L_{p,\infty}(Q_1)} + \norm{\nabla u}_{L_{p,2}(Q_1)}.
\end{equation}
We have the following estimates for $L$ in $X$:
\begin{equation}
	\label{Lfest}
	\norm{Lf}_{X} \leq C \norm{f}_{L_{p,s}(Q_1)},
\end{equation}
\begin{equation}
	\label{gest}
	\norm{Lg}_{X} \leq C \min\left( \norm{g}_{L_{p,r}(Q_1)}, \norm{g}_{L_{\frac{p}{2},2}(Q_1)} \right),
\end{equation}
whenever $r>1$.
These may be derived from the smoothing estimates for the Stokes semigroup (see Proposition 20 on p. 183 in~\cite{Hieber2016}, for example) and the Hardy-Littlewood-Sobolev inequality, using the requirement $p > 3$.

Using~\eqref{gest}, we define the bilinear form $B \: X \times X \to X$,
\begin{equation}
	B(u,v) = -L(u \cdot \nabla v),
\end{equation}
\begin{equation}
	\norm{B(u,v)}_{X} \leq C_B \norm{u}_{X} \norm{v}_{X},
\end{equation}
as well as the linear operator $L_{V,W} \: X \to X$,
\begin{equation}
	L_{V,W} g = -L(V \cdot \nabla g + g \cdot W),
\end{equation}
\begin{equation}
	\norm{L_{V,W} g}_{X_T} \leq C_2 \left( \norm{V}_{L_{\infty,s_2}(Q_T)} + \norm{W}_{L_{\infty,s_1}(Q_T)} \right) \norm{g}_{X_T}.
\end{equation}
If we choose $c_0 > 0$ such that $C_2 c_0 \leq \frac{1}{2}$,
then $(I+L_{V,W})^{-1} \: X_T \to X_T$ exists with operator norm $\norm{(I+L_{V,W})^{-1}}_{X_T \to X_T} \leq 2$, provided that~\eqref{VWfprop} is satisfied.

The perturbed Navier-Stokes equations are now equivalent to the integral equation
\begin{equation}
	w = (I+L_{V,W})^{-1} Lf + (I+L_{V,W})^{-1} B(w,w),
\end{equation}
which may be solved by a contraction mapping argument (see \cite[Appendix]{gallagherasymptotics} or \cite[Chapter 5]{bahourichemindanchin}, for example) as long as
\begin{equation}
	\norm{Lf}_{X_T} \leq \frac{c_1}{C_B},
\end{equation}
	where $c_1 > 0$ is a small absolute constant. In light of~\eqref{Lfest}, this is ensured by reducing the size of $c_0$, which completes the existence proof. The relevant contraction also gives~\eqref{westintermsoff}.

	The proof of energy equality and weak-strong uniqueness can be found in the references to Sohr's book~\cite{Sohrbook} mentioned in the proof of Proposition~\ref{pro:AbeGiga}.
\end{proof}

\begin{corollary}[H{\"o}lder continuity for perturbed NSE]
\label{cor:HolderperturbedNSE}
We adopt the hypotheses of Proposition~\ref{pro:perturbedNSE} with the values $p = 12$, $s=s_1=\frac{3}{2}$, and $s_2=2018$.

Suppose additionally that $V, t^{\frac{1}{2}} W \in L_\infty(Q_1)$ and $f \in L_{18,\frac{3}{2}}(Q_1)$. Then the solution $w$ on $Q_1$ from Proposition~\ref{pro:perturbedNSE} satisfies the estimate
	\begin{equation}
[w]_{C^{\frac{1}{2}}(Q_{1})} \leq C\left(\norm{f}_{L_{12,\frac{3}{2}}(Q_1)},\norm{f}_{L_{18,\frac{3}{2}}(Q_1)},\norm{V}_{L_\infty(Q_1)},\norm{t^{\frac{1}{2}}W}_{L_\infty(Q_1)} \right).
	\end{equation}
The constant $C > 0$ is an increasing function of its arguments.
\end{corollary}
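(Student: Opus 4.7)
The plan is to bootstrap the regularity of $w$ via two applications of maximal regularity for the Stokes system, aiming at $w \in W^{2,1}_{18,3/2}(Q_1)$. This target is sharp: since $w$ vanishes on the parabolic boundary $\p'Q_1$, the scaling-invariant parabolic Sobolev embedding of Lemma~\ref{lem:parabolicsobolev} then delivers Hölder continuity with parabolic exponent $2 - \frac{3}{18} - \frac{2}{3/2} = \frac{1}{2}$. Throughout I view the equation~\eqref{eq:perturbedNSE} as the Stokes problem $\p_t w - \Delta w + \nabla \pi = g$, $\div w = 0$, $w|_{\p'Q_1} = 0$, with source
\[
g := f - w \cdot \nabla w - V \cdot \nabla w - w \cdot W,
\]
and upgrade the integrability of $g$ in two stages.

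\emph{First iteration.} Proposition~\ref{pro:perturbedNSE} applied with $p = 12$ already places $w \in L_{12,\infty}(Q_1)$ and $\nabla w \in L_{12,2}(Q_1)$, with norms controlled by $\|f\|_{L_{12,3/2}(Q_1)}$ (noting $L_{18,3/2} \hookrightarrow L_{12,3/2}$ on the bounded domain $\Omega$), $\|V\|_{L_\infty}$, and $\|W\|_{L_{\infty,3/2}}$; the last is finite because $|W(x,t)| \leq \|t^{1/2}W\|_{L_\infty}\, t^{-1/2}$ and $t^{-1/2} \in L^q_t(\,]0,1[\,)$ for every $q<2$. By Hölder's inequality every ingredient of $g$ then lies in $L_{6,3/2}(Q_1)$: $f \in L_{18,3/2} \hookrightarrow L_{6,3/2}$; $w \cdot \nabla w \in L_{12,\infty} \cdot L_{12,2} \hookrightarrow L_{6,2} \hookrightarrow L_{6,3/2}$; $V \cdot \nabla w \in L_{\infty} \cdot L_{12,2} \hookrightarrow L_{6,3/2}$; and $w \cdot W \in L_{12,\infty} \cdot L_{\infty,3/2} \hookrightarrow L_{6,3/2}$. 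Maximal regularity for the Stokes system of Giga--Sohr type (as invoked elsewhere in the paper) then yields $w \in W^{2,1}_{6,3/2}(Q_1)$ with a quantitative bound.

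\emph{Second iteration and conclusion.} Because $w|_{\p'Q_1}=0$, Lemma~\ref{lem:parabolicsobolev} applied to $w$ with the positive exponent $2 - \frac{3}{6} - \frac{2}{3/2} = \frac{1}{6}$ yields $w \in L_\infty(Q_1)$, and the three-dimensional spatial Sobolev embedding $W^{1,6}_x \hookrightarrow L^\infty_x$ applied slicewise (using $\nabla w, \nabla^2 w \in L^{3/2}_t L^6_x$) upgrades $\nabla w$ to $L_{\infty,3/2}(Q_1)$. With these bounds, each term of $g$ now belongs to $L_{18,3/2}(Q_1)$: $f$ by hypothesis; $w \cdot \nabla w$ and $V \cdot \nabla w$ by pairing $L_\infty$ with $L_{18,3/2}$; and $w \cdot W$ by the explicit estimate $\|w\|_{L_\infty} \|t^{1/2}W\|_{L_\infty} \|t^{-1/2}\|_{L^{3/2}_t}\,|\Omega|^{1/18}$. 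A second application of Stokes maximal regularity places $w \in W^{2,1}_{18,3/2}(Q_1)$, and a final use of Lemma~\ref{lem:parabolicsobolev} gives $w \in C^{1/2}(Q_1)$ with the asserted constant, which by construction is an increasing function of $\|f\|_{L_{12,3/2}}$, $\|f\|_{L_{18,3/2}}$, $\|V\|_{L_\infty}$, and $\|t^{1/2}W\|_{L_\infty}$.

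\emph{Main obstacle.} The principal technical point is the $t^{-1/2}$ singularity of $W$ at the initial time, which prevents working in $L^2_t$ and forces the time exponent $q = 3/2$ throughout. At the Hölder-$\frac{1}{2}$ threshold this enforces the balance $\frac{3}{p} + \frac{2}{q} = \frac{3}{2}$, pinning the spatial exponent at $p = 18$ in the final stage; the rest is routine bookkeeping of constants through the bootstrap, together with verifying that the Stokes maximal regularity constant at $q = 3/2$ can be absorbed into the final estimate.
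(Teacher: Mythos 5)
Your proof is correct but takes a different organizational route from the paper's. The paper uses the Duhamel decomposition $w = Lf - L(w\cdot\nabla w) - L(V\cdot\nabla w + w\cdot W)$ and estimates each term \emph{separately} in a carefully chosen $W^{2,1}_{p,q}$ space whose parabolic Sobolev embedding exponent is exactly $\frac{1}{2}$: $Lf$ in $W^{2,1}_{18,\frac{3}{2}}$, $L(w\cdot\nabla w)$ in $W^{2,1}_{6,2}$, and $L(V\cdot\nabla w + w\cdot W)$ in $W^{2,1}_{12,\frac{8}{5}}$ — each of these triples satisfies $2 - \frac{3}{p} - \frac{2}{q} = \frac{1}{2}$, so a single application of maximal regularity plus Corollary~\ref{cor:parabolicsobolevzerobcs} per term gives the result with no bootstrap. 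You instead lump the entire source into one function $g$, observe that the weakest common Lebesgue space for its four pieces is $L_{6,\frac{3}{2}}$, obtain $w \in W^{2,1}_{6,\frac{3}{2}}$ (giving only $C^{\frac{1}{6}}_\para$), and then use the resulting $L_\infty$ control of $w$ and $\nabla w$ to upgrade $g$ to $L_{18,\frac{3}{2}}$ and finish. This is a genuine two-pass bootstrap where the paper has a one-pass argument; the bookkeeping is correct in both, and both ultimately rest on the same two ingredients (Giga--Sohr maximal regularity and the scale-invariant embedding of Corollary~\ref{cor:parabolicsobolevzerobcs}). The paper's version is a bit more economical and avoids the intermediate step of identifying $w$ with the $W^{2,1}_{6,3/2}$ maximal-regularity solution; your version is conceptually uniform (treat the whole source the same way), at the cost of the extra iteration, and incidentally yields the slightly stronger statement $w \in W^{2,1}_{18,\frac{3}{2}}(Q_1)$. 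One minor point to record: in the first pass you should remark that the $W^{2,1}_{6,\frac{3}{2}}$ maximal-regularity solution of the Stokes problem with zero initial and boundary data and source $g$ coincides with $w$; this is automatic from the Duhamel representation $w = L(g)$ already implicit in Proposition~\ref{pro:perturbedNSE}, but it is worth flagging so the identity being differentiated is actually known to hold in $W^{2,1}_{6,\frac{3}{2}}$.
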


The main requirement for the exponents is to choose $\frac{1}{2} < \kappa := 1 - \frac{3}{p} < 1$, since the H{\"o}lder exponent is (at most) $2\kappa-1$, see~\eqref{wcdotnablawterm}. Here, $\kappa = \frac{3}{4}$. As for the remaining indices, the choice $s = \frac{3}{2}$ is natural in our situation, we choose $L_{18,\frac{3}{2}}$ so that~\eqref{fterm} holds, and we can choose any $1 < s_1 < 2$ and $2 < s_2 \leq \infty$ to use Proposition~\ref{pro:perturbedNSE}.

\begin{proof}
	In order to bootstrap, we decompose the solution in $Q_1$ as 
	\begin{equation}
	w = Lf - L(w\cdot \nabla w) - L(V \cdot \nabla w + w \cdot W).
	\end{equation}

	Since $f \in L_{18,\frac{3}{2}}(Q_1)$, maximal regularity and parabolic Sobolev embedding (see Corollary~\ref{cor:parabolicsobolevzerobcs}) into H{\"o}lder spaces imply
	\begin{equation}
	\label{fterm}
	[Lf]_{C^{\frac{1}{2}}_\para(Q_1)} \leq C \norm{Lf}_{\dot W^{2,1}_{18,\frac{3}{2}}(Q_1)} \leq C\norm{f}_{L_{18,\frac{3}{2}}(Q_1)}.
	\end{equation}
	
	Since $w \cdot \nabla w \in L_{6,2}(Q_1)$, by the same arguments and the estimates on $w \cdot \nabla w$ from Proposition~\ref{pro:perturbedNSE},
	\begin{equation}
	\label{wcdotnablawterm}
	[L(u \cdot \nabla u)]_{C^{\frac{1}{2}}_\para(Q_1)} \leq C \norm{L(w \cdot \nabla w)}_{\dot W^{2,1}_{6,2}(Q_1)} \leq C \norm{f}_{L_{12,\frac{3}{2}}(Q_1)}^2.
	\end{equation}

	Finally, by our extra assumptions on $V$ and $W$, we have $V \cdot \nabla w$ and $w \cdot W$ belong to $L_{12,\frac{8}{5}}(Q_1)$. Hence, by similar arguments,
	\begin{equation}
	\begin{aligned}
	\left[L(V \cdot \nabla w + w \cdot W)\right]_{C^{\frac{1}{2}}_\para(Q_1)} &\leq C\norm{L(V \cdot \nabla w + w \cdot W)}_{\dot W^{2,1}_{12,\frac{8}{5}}(Q_1)} \\
	 &\leq C \norm{f}_{L_{12,\frac{3}{2}}(Q_1)} \left( \norm{V}_{L_\infty(Q_1)} + \norm{t^{\frac{1}{2}} W}_{L_\infty(Q_1)} \right).
	\end{aligned}
	\end{equation}
\end{proof}

\begin{lemma}[A decomposition]
\label{lem:Holdercontdecomp}
There exists $T_\sharp \in ]0,1]$ and $\epsilon_0 > 0$ satisfying the following properties. Suppose that $v$ is a weak Leray-Hopf solution on $Q_{T_\sharp}$ satisfying
\begin{equation}
	|v| \leq 1 \text{ on } Q_{T_\sharp}
\end{equation}
with initial condition $v(\cdot,0) \in C_{0,\sigma}(\Omega)$
and forcing term $f$ satisfying
\begin{equation}
	\label{forcinglessthan1}
	\norm{f}_{L_{18,\frac{3}{2}}(Q_{T_\sharp})} \leq \epsilon_0.
\end{equation}
Then $v = u + w$, where $u$ is the weak solution of the Navier-Stokes equations on $Q_{T_\sharp}$ obtained in Proposition~\ref{pro:AbeGiga} with initial data $v(\cdot,0)$ (and $T_\sharp \leq T$, where $T$ is from Proposition~\ref{pro:AbeGiga}), and the remainder $w$ satisfies
\begin{equation}
	\label{eq:westimate}
	\norm{w}_{L_{12,\infty}(Q_{T_{\sharp}})} \leq C \norm{f}_{L_{12,\frac{3}{2}}(Q_{T_\sharp})}.
\end{equation}
Finally, $v$ satisfies the space-time H{\"o}lder estimate
\begin{equation}
	\label{eq:vholderestimate}
	\norm{v}_{C^{\frac{1}{2}}_\para(\Omega \times [T_{\sharp}/2, T_\sharp])} \leq C.
\end{equation}
\end{lemma}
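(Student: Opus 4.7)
The plan is to take $u$ to be the $L_\infty$ mild solution of the unforced Navier-Stokes equations on $\Omega$ with initial data $v(\cdot,0) \in C_{0,\sigma}(\Omega)$ provided by Proposition~\ref{pro:AbeGiga}. Since $\|v(\cdot,0)\|_{L_\infty(\Omega)} \leq 1$, \eqref{eq:Tlowerbound} furnishes an existence time $T \geq c > 0$, and we restrict $T_\sharp \in \,]0, T \wedge 1]$ to be fixed below. Defining $w := v-u$ and subtracting the two momentum equations (absorbing $p_v - p_u$ into a new pressure), a direct computation using $v \cdot \nabla v - u \cdot \nabla u = u \cdot \nabla w + w \cdot \nabla u + w \cdot \nabla w$ shows that $w$ solves the perturbed system~\eqref{eq:perturbedNSE} on $Q_{T_\sharp}$ with $V := u$, $W := \nabla u$, forcing $f$, and zero initial condition.

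Next, I would verify the smallness condition~\eqref{VWfprop} of Proposition~\ref{pro:perturbedNSE} with the indices of Corollary~\ref{cor:HolderperturbedNSE}, namely $p=12$, $s=s_1=3/2$, $s_2=2018$. The Abe-Giga estimate~\eqref{eq:AbeGigaspaceholder} yields $\|u\|_{L_\infty(Q_{T_\sharp})} \leq C$, so
\begin{equation*}
\|u\|_{L_{\infty,2018}(Q_{T_\sharp})} \leq C T_\sharp^{1/2018},
\end{equation*}
while $t^{1/2} \|\nabla u(\cdot,t)\|_{L_\infty(\Omega)} \leq C$ gives (since the singularity $t^{-3/4}$ is integrable near $0$)
\begin{equation*}
\|\nabla u\|_{L_{\infty,3/2}(Q_{T_\sharp})} \leq C T_\sharp^{1/6}.
\end{equation*}
By H\"older's inequality in space, $\|f\|_{L_{12,3/2}(Q_{T_\sharp})} \leq C(\Omega) \|f\|_{L_{18,3/2}(Q_{T_\sharp})} \leq C\epsilon_0$. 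Choosing $T_\sharp$ sufficiently small and then $\epsilon_0$ sufficiently small forces~\eqref{VWfprop}, so Proposition~\ref{pro:perturbedNSE} yields a solution $\tilde w \in C([0,T_\sharp];L_{12}(\Omega)) \cap W^{1,0}_{12,2}(Q_{T_\sharp})$ of the perturbed system satisfying~\eqref{westintermsoff}. Because $u \in L_\infty$ and $\tilde w \in L_{12,\infty}$, the sum $u + \tilde w$ lies in a subcritical Serrin class; since $v$ and $u+\tilde w$ are both weak Leray-Hopf solutions of the original forced NSE on $Q_{T_\sharp}$ with initial datum $v(\cdot,0)$ and forcing $f$, weak-strong uniqueness (Theorem~1.5.1 in~\cite{Sohrbook}) gives $w = \tilde w$, and~\eqref{eq:westimate} follows.

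For the parabolic H\"older bound~\eqref{eq:vholderestimate}, I treat $u$ and $w$ separately on $\Omega \times [T_\sharp/2, T_\sharp]$. On this interval, Abe's estimates~\eqref{eq:AbeGigaspaceholder}--\eqref{eq:AbeGigatimeholder} (with $\alpha=\gamma=1/2$ and $B = T_\sharp/2$) control $\nabla u$ in $L_\infty$ and $u$ in $C^{1/2}$ in time, giving $u \in C^{1/2}_\para(\Omega \times [T_\sharp/2, T_\sharp])$ with a bound depending only on $\|v(\cdot,0)\|_{L_\infty} \leq 1$ and $T_\sharp$. For the remainder $w$, I would invoke Corollary~\ref{cor:HolderperturbedNSE} (either after the parabolic rescaling that sends $Q_{T_\sharp}$ to $Q_1$, or by rerunning its decomposition argument directly on $Q_{T_\sharp}$): its extra hypotheses $V \in L_\infty(Q_{T_\sharp})$ and $t^{1/2} W \in L_\infty(Q_{T_\sharp})$ are precisely the Abe-Giga bounds, and the right-hand side is controlled by $\|f\|_{L_{12,3/2}(Q_{T_\sharp})}$ and $\|f\|_{L_{18,3/2}(Q_{T_\sharp})}$, both bounded by $C\epsilon_0$. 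Combining the H\"older estimates for $u$ and $w$ on $[T_\sharp/2, T_\sharp]$ yields~\eqref{eq:vholderestimate}.

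The main obstacle is reconciling the two distinct roles played by $V = u$ and $W = \nabla u$: smallness in $L_{\infty,s_2}$, $L_{\infty,s_1}$ for Proposition~\ref{pro:perturbedNSE}, versus boundedness in $L_\infty$, respectively weighted $L_\infty$, for the H\"older estimate of Corollary~\ref{cor:HolderperturbedNSE}. The choices $s_1=3/2$ and $s_2=2018$ are engineered precisely for this purpose: the exponent $s_1=3/2$ sits just below the critical value $2$ (at which $\|t^{1/2} \nabla u\|_{L_\infty}$ would fail to be integrable in time), which both justifies the $t^{-3/4}$ time singularity and produces the positive power of $T_\sharp$ needed to absorb smallness.
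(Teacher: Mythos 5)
Your proposal reproduces the paper's argument essentially step for step: take $u$ to be Abe's $L_\infty$ mild solution with initial data $v(\cdot,0)$, set $V=u$, $W=\nabla u$, use the smoothing bounds~\eqref{eq:AbeGigaspaceholder} plus H\"older in time to make $\norm{V}_{L_{\infty,2018}}$ and $\norm{W}_{L_{\infty,3/2}}$ small on a short interval, choose $T_\sharp$ and $\epsilon_0$ so that~\eqref{VWfprop} holds, solve the perturbed system via Proposition~\ref{pro:perturbedNSE}, identify $v=u+\tilde w$ by weak--strong uniqueness, and finally combine the Abe H\"older estimate for $u$ with Corollary~\ref{cor:HolderperturbedNSE} for $w$. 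The only cosmetic differences are that the paper zero-extends $V,W,f$ to $Q_1$ so Proposition~\ref{pro:perturbedNSE} can be quoted verbatim, and that you should, as the paper does, explicitly check that $u+\tilde w$ belongs to the energy class and satisfies the energy (in)equality before invoking weak--strong uniqueness, rather than merely noting membership in a subcritical Serrin class.
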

\begin{proof}
First, Proposition~\ref{pro:AbeGiga} guarantees the existence of a unique solution $u$ to the Navier-Stokes equations in $Q_{T}$ with initial data $v(\cdot,0)$ and satisfying various properties detailed therein.

Let $V = u$ and $W = \nabla u$. Since $\norm{V}_{L_\infty(Q_{T})}$ and $\norm{t^{\frac{1}{2}} W}_{L_\infty(Q_{T})} \leq C$, there exists $0 < S \leq T$ such that, by H{\"o}lder's inequality in time,
\begin{equation}
	\norm{V}_{L_{\infty,2018}(Q_{S})} + \norm{W}_{L_{\infty,\frac{3}{2}}(Q_{S})} \leq \frac{c_0}{2},
\end{equation}
where $c_0$ is the constant in Proposition~\ref{pro:perturbedNSE} for $p=12$, $s=s_1=\frac{3}{2}$, and $s_2=2018$. In addition, we take $T_\sharp := \min(S,1)$ and $\epsilon_0 := c_0/2$ in the statement. That is,
\begin{equation}
	\norm{f}_{L_{12,\frac{3}{2}}(Q_{T_\sharp})} \leq \frac{c_0}{2}.
\end{equation}
If necessary, we redefine $V,W,f \equiv 0$ on $\Omega \times ]T_\sharp,1[$. Hence,~\eqref{VWfprop} is satisfied.

Next, we solve the perturbed Navier-Stokes equations on $Q_1$ with zero initial data, forcing term $f$, and coefficients $V$ and $W$, according to Proposition~\ref{pro:perturbedNSE}.  We denote the solution by $w$.

Define $\tilde{v} = u + w$ on $Q_{T_\sharp}$. Then 
\begin{equation}
	\label{eq:tildevspaces}
	u,w,\tilde{v} \in L_{2,\infty}(Q_{T_\sharp}) \cap W^{1,0}_2(Q_{T_\sharp}) \cap C([0,T_{\sharp}];L_{12}(\Omega)).
\end{equation}
Moreover, $\tilde{v}$ is a weak Leray-Hopf solution on $Q_{T_\sharp}$ with initial data $v(\cdot,0)$ and forcing term $f$, since the integration-by-parts computation to obtain energy equality can be justified using~\eqref{eq:tildevspaces}. By weak-strong uniqueness, $v$ as in the statement of Lemma~\ref{lem:Holdercontdecomp} is identical to $\tilde{v}$ on $Q_{T_\sharp}$.

To conclude, the estimate~\eqref{eq:westimate} follows from Proposition~\ref{pro:perturbedNSE}, and H{\"o}lder continuity follows from
\begin{equation}
	\label{uwupperpartofholdernorm}
	[u]_{C^{\frac{1}{2}}(\Omega \times [T_\sharp/2,T_\sharp])}, \; [w]_{C^{\frac{1}{2}}(Q_{T_\sharp})} \leq C
\end{equation}
using Proposition~\ref{pro:AbeGiga} and Proposition~\ref{pro:perturbedNSE}, respectively. Combining~\eqref{uwupperpartofholdernorm} with $|v| \leq 1$ gives~\eqref{eq:vholderestimate}.
\end{proof}

\begin{proof}[Proof of Proposition~\ref{pro:Holderestimates} (H{\"o}lder estimates)]
Let $A > 0$. We employ Lemma~\ref{lem:Holdercontdecomp} and a covering argument. Let $N \in \N$ such that $A_\sharp := A + T_\sharp/2 \leq M_N^2$ and
\begin{equation}
	\norm{f_N}_{L_{18,\frac{3}{2}}(Q_N)} \leq \epsilon_0,
\end{equation}
where $\epsilon_0$ is as in Lemma~\ref{lem:Holdercontdecomp}. This is possible due to~\eqref{fntozero}.
Let $I = t_0 + [0,T_\sharp] \subset [-A_\sharp,0]$ be a closed interval of length $T_\sharp$. Then Lemma~\ref{lem:Holdercontdecomp} implies
\begin{equation}
	\sup_{n \geq N} \norm{v_n}_{C^{\frac{1}{2}}_\para(\Omega_n \times (t_0 + [T_\sharp/2,T_\sharp]))} \leq C.
\end{equation}Since $I$ was arbitrary, we obtain the result by covering $[-A_\sharp,0]$ with intervals $I$ (and extending by zero in space outside $\Omega$).
\end{proof}

Corollary~\ref{cor:Holderestimates} is immediate from Proposition~\ref{pro:Holderestimates} and the compact embeddings of H{\"o}lder spaces. In the case of Scenario~II, recall that $U \equiv 0$ outside $\R^3_{a}$. The no-slip condition follows from this fact and the continuity of $U$ on $\R^3$.

\subsection{Pressure estimates for rescaled truncated solution}

\subsubsection*{Step 4: Scale-invariant pressure estimates}

We now concern ourselves with pressure estimates for the solution $u$ in Proposition~\ref{pro:AbeGiga}. 

\begin{pro}[Pressure estimates]
\label{pro:pressureestimates}
	Let $u$ be the solution obtained in Proposition~\ref{pro:AbeGiga}. Then the associated pressure gradient $\nabla p$ may be decomposed as
	\begin{equation}
	\nabla p = \nabla p_{u \otimes u} + \nabla p_{\rm h},
	\end{equation}
	where 
	\begin{equation}
	p_{u\otimes u} = (-\Delta)^{-1} \div \div (u \otimes u)
	\end{equation}
	and $p_{\rm h}$ is a harmonic function in $\Omega$. In other words, $p_{u \otimes u} = \sum_{i,j=1}^3 \cR_i \cR_j (u_i u_j)$, where $\cR_i$ is the $i$th Riesz transform ($1 \leq i \leq 3$).
	We have
	\begin{equation}
	\label{puotimesuest}
	\sup_{0<t<T} \norm{p_{u \otimes u}}_{\BMO(\R^3)} + t^{\frac{\alpha}{2}} [p_{u\otimes u}]_{C^\alpha(\R^3)} \leq C(\alpha) \norm{u_0}_{L_\infty(\Omega)}^2
	\end{equation}
	for all $0 < \alpha < 1$. Furthermore, $p_{\rm h}$ may be decomposed as
	\begin{equation}
	p_{\rm h} = p_{\rm h}^1 + p_{\rm h}^2,
	\end{equation}
	where $p_{\rm h}^i$ ($i=1,2$) is harmonic in $\Omega$ and satisfies
	\begin{equation}
	\label{p1est}
	\sup_{0<t<T} t^{\frac{1}{2}} \sup_{x \in \Omega} \dist(x,\p\Omega) |\nabla p_{\rm h}^1(x,t)| \leq C \left(\norm{u_0}_{L_\infty(\Omega)} + T^{\frac{1}{2}} \norm{u_0}_{L_\infty(\Omega)}^2 \right).
	\end{equation}
	\begin{equation}
	\label{p2est}
	\sup_{0<t<T} t^{\frac{\alpha}{2}} \sup_{x \in \Omega} \dist(x,\p\Omega)^{1-\alpha} |\nabla p_{\rm h}^2(x,t)| \leq C(\alpha) \norm{u_0}_{L_\infty(\Omega)}^2,
	\end{equation}
	for all $0 < \alpha < 1$.
\end{pro}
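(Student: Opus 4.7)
The plan is to produce the decomposition directly. Extending $u$ by zero to $\R^3$ (admissible because $u|_{\p\Omega}=0$), define $p_{u\otimes u}:=\sum_{i,j=1}^3\cR_i\cR_j(u_iu_j)$; then $-\Delta p_{u\otimes u}=\div\div(u\otimes u)$ on $\R^3$, while taking the divergence of the momentum equation together with $\div u=0$ gives $-\Delta p=\tr((\nabla u)^2)=\div\div(u\otimes u)$ in $\Omega$. Hence $p_{\rm h}:=p-p_{u\otimes u}$ is harmonic in $\Omega$, and~\eqref{puotimesuest} is immediate from Calder\'on--Zygmund theory: $\cR_i\cR_j$ is bounded $L_\infty\to\BMO$, so $\norm{p_{u\otimes u}}_{\BMO}\leq C\norm{u}_{L_\infty}^2$; and it is bounded on $C^\alpha$, so combining $[u\otimes u]_{C^\alpha}\leq C\norm{u}_{L_\infty}[u]_{C^\alpha}$ with the interpolation $[u]_{C^\alpha}\leq C\norm{u}_{L_\infty}^{1-\alpha}\norm{\nabla u}_{L_\infty}^\alpha$ and the estimates of Proposition~\ref{pro:AbeGiga} gives the $t^{\alpha/2}$-weighted bound.

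Next, I would identify the Neumann data for the harmonic remainder $p_{\rm h}$. Restricting the momentum equation to $\p\Omega$, where $u=0$ kills both $\p_t u$ and $u\cdot\nabla u$, yields $\nabla p=\Delta u$ on $\p\Omega$, hence $\p_n p_{\rm h}=\Delta u\cdot n-\p_n p_{u\otimes u}$. Using $\div u=0$ and the identity $\Delta u=-\curl(\curl u)$, and extending $n$ via the gradient of the signed distance to $\p\Omega$ (so $\curl n=0$ on $\p\Omega$), one obtains $\Delta u\cdot n=-\operatorname{div}_\Gamma(\curl u\times n)$. The Dirichlet condition $u|_{\p\Omega}=0$ forces $\nabla u=(\p_n u)\otimes n$ on $\p\Omega$, while $\div u=0$ then forces $\p_n u\cdot n=-\operatorname{div}_\Gamma u_{\rm tan}=0$; a direct computation gives $\curl u\times n=\p_n u$, which is tangential. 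Consequently $\Delta u\cdot n=-\operatorname{div}_\Gamma(\p_n u)$ sits in divergence form along $\p\Omega$, with tangential datum $\p_n u$ of size $\norm{\nabla u}_{L_\infty}\leq Ct^{-1/2}\norm{u_0}_{L_\infty}$ by Proposition~\ref{pro:AbeGiga}.

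Finally, I would split $p_{\rm h}=p_{\rm h}^1+p_{\rm h}^2$, where $p_{\rm h}^1$ and $p_{\rm h}^2$ are the harmonic functions on $\Omega$ solving the Neumann problems with data $\Delta u\cdot n$ and $-\p_n p_{u\otimes u}$, respectively. For $p_{\rm h}^1$ the distance-weighted estimate of Appendix~\ref{sec:neumannprob} for divergence-form Neumann data bounded in $L_\infty$ yields the main part of~\eqref{p1est}, namely $\dist(x,\p\Omega)|\nabla p_{\rm h}^1|\leq Ct^{-1/2}\norm{u_0}_{L_\infty}$; an auxiliary bulk lift needed to realize $\operatorname{div}_\Gamma(\p_n u)$ inside the framework of Appendix~\ref{sec:neumannprob} contributes the additive $T^{1/2}\norm{u_0}_{L_\infty}^2$ correction. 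For $p_{\rm h}^2$ the datum $\p_n p_{u\otimes u}$ is controlled in $C^\alpha(\p\Omega)$ by the $C^\alpha$ bound already established for $p_{u\otimes u}$ in step one, with the $t^{\alpha/2}$ scaling, so the $\dist^{1-\alpha}$-weighted Neumann estimate of Appendix~\ref{sec:neumannprob} produces~\eqref{p2est}. The main obstacle is the divergence-form rewriting of the Neumann data and the matching lift: executing $\Delta u\cdot n=-\operatorname{div}_\Gamma(\p_n u)$ rigorously on a general $C^3$ domain requires careful tracking of curvature-dependent lower-order terms, and the book-keeping of how quadratic contributions redistribute between $p_{\rm h}^1$ and $p_{\rm h}^2$ is the delicate step that pins down the precise $T^{1/2}$ factor in~\eqref{p1est}.
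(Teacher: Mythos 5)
Your decomposition is exactly the one the paper uses, and the treatment of $p_{u\otimes u}$ and of $p^1_{\rm h}$ is essentially the paper's argument: the paper sets $p^1_{\rm h}=\pi$ with $\nabla\pi=\bQ(\Delta u)$, which via Lemma~\ref{cor:pressurecor1} has divergence-form Neumann datum $\div_{\p\Omega}(An)$ with $A=(\nabla u)^T-\nabla u$, precisely your $-\div_{\Gamma}(\p_n u)$; then it applies the distance-weighted estimate~\eqref{ellipticest1} with $\norm{A}_{L_\infty(\p\Omega)}\leq 2\norm{\nabla u}_{L_\infty}\leq Ct^{-1/2}\norm{u_0}_{L_\infty}$. (The additive $T^{1/2}\norm{u_0}^2$ term in~\eqref{p1est} is \emph{not} produced by any bulk lift as you suggest; it is an artifact of Lemma~\ref{lem:subpressurelem} bounding $\nabla w$ via the Duhamel representation with forcing $-\bP\div(u\otimes u)$ rather than quoting Proposition~\ref{pro:AbeGiga} directly; your route gives the stronger bound without that term, which certainly implies~\eqref{p1est}, so this is only a misidentified source, not an error.)

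The genuine gap is in your treatment of $p^2_{\rm h}$. You assert that the Neumann datum $\p_n p_{u\otimes u}$ is ``controlled in $C^\alpha(\p\Omega)$ by the $C^\alpha$ bound already established for $p_{u\otimes u}$.'' That loses a derivative: knowing $[p_{u\otimes u}]_{C^\alpha}\leq Ct^{-\alpha/2}\norm{u_0}^2$ gives no pointwise bound on $\nabla p_{u\otimes u}$, let alone a $C^\alpha$ bound on its normal trace. Moreover the estimate~\eqref{ellipticest2} in Appendix~\ref{sec:neumannprob} is not for generic $C^\alpha$ Neumann data; it is specifically for divergence-form data $\div_{\p\Omega}(An)$ with an antisymmetric $A\in C^\alpha(\overline{\Omega})$, and the right-hand side involves $[A]_{C^\alpha(\Omega)}$, not a norm of $\p_n p_{u\otimes u}$. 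The paper resolves this in Lemma~\ref{cor:pressurecor2}: it produces an antisymmetric matrix $A=\nabla h-(\nabla h)^T$ with $h=\Gamma\ast\div(u\otimes u)$ such that $-\p_n p_{u\otimes u}=\div_{\p\Omega}(An)$; crucially $\nabla h$ is a \emph{zeroth-order} (double Riesz transform type) operator applied to $u\otimes u$, so $[A]_{C^\alpha}$ scales like $[p_{u\otimes u}]_{C^\alpha}$ without derivative loss, which is what makes~\eqref{p2est} come out with the $t^{\alpha/2}$ weight and $\norm{u_0}^2$ on the right. Without this divergence-form rewriting and the identification of the antisymmetric $A$ at the right scaling, the argument for $p^2_{\rm h}$ does not close.
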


We will adopt the notation $\bP \: L_2(\Omega) \to L_{2,\sigma}(\Omega)$ for the Leray projection obtained from the Helmholtz decomposition, and $\bQ = I - \bP$.

\begin{proof}
We decompose the pressure gradient as
\begin{equation}
	\nabla p = \nabla \Phi + \nabla \pi,
\end{equation}
\begin{equation}
	\nabla \Phi = -\bQ( u \cdot \nabla u ), \quad \nabla \pi = \bQ(\Delta u).
\end{equation}
Observe that $\nabla \pi$ is the pressure gradient associated to the solution $w$ of the Stokes equations
\begin{equation}
\label{eq:Stokesw}
\left\lbrace
\begin{aligned}
	\p_t w - \Delta w + \nabla \pi &= -\bP \div F & &\text{ in } Q_T \\
	\div w &= 0 & &\text{ in } Q_T\\
	w &= 0 & &\text{ on } \p\Omega \times ]0,T[ \\
	w(\cdot,0) &= u_0 & &\text{ in } \Omega,
	\end{aligned}
	\right.
\end{equation}
where $F = u \otimes u$.
By uniqueness for the Stokes equations, $w = u$. The pressure $\pi$ is an effect of the boundary that accounts for the fact that $\Delta w \cdot n\big|_{\p\Omega}$ does not generally vanish, even though the forcing term $- \bP (u \cdot \nabla u)$ is already projected. In other words, $\bP$ does not typically commute with the Laplacian in a bounded domain.  Therefore, it is natural to use estimates which isolate the boundary effects via a weight $\dist(x,\p\Omega)$.


\begin{lemma}
\label{lem:subpressurelem}
Let $u_0 \in C_{0,\sigma}(\Omega)$ and $F \: \Omega \times \R_+ \to \R^{3\times3}$ satisfying $F, t^{\frac{1}{2}} \nabla F \in L_\infty(\R_+;C_0({\Omega}))$. Then the solution $(w,\nabla \pi)$ of the Stokes equations~\eqref{eq:Stokesw} with initial data $u_0$ and forcing term $-\bP \div F$
 satisfies, for all $t>0$,
\begin{equation}
	\label{eq:subpressurelemest}
	t^{\frac{1}{2}} \sup_{x \in \Omega} {\rm dist}(x,\p\Omega) |\nabla \pi(x)| \leq C(\Omega) \left( \norm{u_0}_{L_\infty(\Omega)} + t^{\frac{1}{2}} \norm{F}_{L_\infty(\Omega \times \R_+)}^{\frac{1}{2}} \norm{s^{\frac{1}{2}} \nabla F}_{L_\infty(\Omega \times \R_+)}^{\frac{1}{2}} \right).
\end{equation}
\end{lemma}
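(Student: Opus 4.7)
The plan is to split $(w,\pi) = (w_1,\pi_1) + (w_2,\pi_2)$ according to whether the data is the initial condition $u_0$ or the forcing $-\bP\div F$. Here $(w_1,\pi_1)$ solves the homogeneous Stokes system with initial datum $u_0$ and zero forcing, while $(w_2,\pi_2)$ solves the Stokes system with zero initial datum and forcing $-\bP\div F$. For $(w_1,\pi_1)$ the bound
\[
t^{\frac{1}{2}} \sup_{x\in\Omega} \dist(x,\p\Omega)|\nabla \pi_1(x,t)| \leq C(\Omega)\norm{u_0}_{L_\infty(\Omega)}
\]
is the scale-invariant boundary pressure estimate of Abe--Giga in~\cite{AbeGiga,abestokesflow}, and this furnishes the $\norm{u_0}_{L_\infty}$ term on the right-hand side of~\eqref{eq:subpressurelemest}.

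The remaining, and main, task is to estimate $\nabla \pi_2$. Since $\bP\div F$ is solenoidal with vanishing normal trace on $\p\Omega$, taking the divergence and the normal trace of the momentum equation for $(w_2,\pi_2)$ (and using $w_2|_{\p\Omega} = 0$, so $\p_t w_2 \cdot n = 0$ there) yields that $\pi_2$ is harmonic in $\Omega$ with Neumann datum $\p_n \pi_2 = \Delta w_2 \cdot n$ on $\p\Omega$. The idea is to put this Neumann datum into divergence form and apply the weighted $L_\infty$ estimates for harmonic functions with divergence-form Neumann data collected in Appendix~\ref{sec:neumannprob}. The reduction to divergence form uses the two boundary/structural facts $w_2|_{\p\Omega} = 0$ and $\div w_2 = 0$, which allow an integration by parts in tangential directions on $\p\Omega$ that re-expresses $\Delta w_2 \cdot n$ as a tangential divergence of a quantity controlled pointwise by $\nabla w_2$ near the boundary.

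To convert the resulting bounds back into norms of $F$, I would use the Duhamel representation $w_2(t) = -\int_0^t e^{-(t-s)A}\bP\div F(s)\,ds$ together with the smoothing estimates underlying Proposition~\ref{pro:AbeGiga}. The interpolated quantity $\norm{F}_{L_\infty}^{\frac{1}{2}}\norm{s^{\frac{1}{2}}\nabla F}_{L_\infty}^{\frac{1}{2}}$ naturally arises by balancing two bounds on the Duhamel integrand: one in which a derivative is integrated by parts off of $F$ onto the semigroup kernel (giving a factor $\norm{F}_{L_\infty}$ at the cost of an extra inverse half power of $t-s$), and one in which the derivative is kept on $F$ and the time weight $s^{\frac{1}{2}}\norm{\nabla F}_{L_\infty}$ is used to make the $s$-integral convergent near $s=0$. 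Optimizing between these two representations at the level of each $s$ (or equivalently taking their geometric mean under parabolic scaling $s\sim t$) reproduces precisely the structure of the interpolated norm in~\eqref{eq:subpressurelemest}.

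The main obstacle I anticipate is the rigorous execution of the divergence-form reformulation of $\p_n \pi_2$ in curved geometry and its proper feeding into the Appendix~\ref{sec:neumannprob} machinery, since one must keep the constants $C(\Omega)$ uniform and the estimate scale-invariant in the exact sense demanded by the `zooming in' procedure of Section~\ref{sec:mbas}. The interpolation step, while conceptually clean, also requires care so that the time weights $t^{\frac{1}{2}}$ and $s^{\frac{1}{2}}$ combine correctly under the Duhamel integration and match the right-hand side of the stated estimate.
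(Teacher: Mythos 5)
Your proposal is correct and follows essentially the same approach as the paper: Duhamel representation, the Abe--Giga interior gradient estimate $t^{1/2}\norm{\nabla S(t)u_0}_{L_\infty}\lesssim\norm{u_0}_{L_\infty}$ together with the interpolated estimate $t^{3/4}\norm{\nabla S(t)\bP\div G}_{L_\infty}\lesssim\norm{G}^{1/2}_{L_\infty}\norm{\nabla G}^{1/2}_{L_\infty}$ from~\cite{abestokesflow}, and then the weighted elliptic estimates via the divergence-form Neumann problem (Lemmas~\ref{cor:pressurecor1} and~\ref{lem:ellipticest}). The only cosmetic difference is that you split $(w,\pi)$ into initial-data and forcing parts and invoke the weighted pressure bound directly for the homogeneous piece, whereas the paper bounds $t^{1/2}\norm{\nabla w}_{L_\infty}$ all at once from the representation formula and applies the elliptic machinery a single time at the end; by linearity these are equivalent.
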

Here, $C_0(\Omega)$ denotes the space of continuous functions in $\overline{\Omega}$ vanishing on $\p\Omega$, and $C^1_0(\Omega)$ denotes the space of functions $F$ which are $C^1$ in $\overline{\Omega}$ with $F$ and $\nabla F$ vanishing on $\p\Omega$. The space $C_{0,\sigma}(\Omega)$ consists of divergence-free $C_0(\Omega)$ vector fields.

In order to apply Lemma~\ref{lem:subpressurelem}, we notice that, when $F = u \otimes u$ with $u, t^{\frac{1}{2}} \nabla u\in L_\infty(\R_+;C(\overline{\Omega}))$ and $u|_{\p\Omega}(\cdot,t) = 0$, certainly $F,t^{\frac{1}{2}} \nabla F \in L_\infty(\R_+;C_0(\Omega))$. We extend $F$ forward-in-time by zero if necessary.
\begin{proof}[Proof of Lemma~\ref{lem:subpressurelem}]
Let $\left( S(t) \right)_{t \geq 0}$ denote the Stokes semigroup in $\Omega$ with Dirichlet boundary conditions.
We have the representation formula
\begin{equation}
	\label{wrepformula}
	w = S(t) u_0 - \int_0^t S(t-s) \bP \div F \, ds \; \text{ for all } t \geq 0.
\end{equation}
The following gradient estimate for the semigroup $S(\cdot)$ was proven in~\cite{AbeGiga}:
\begin{equation}
	 t^{\frac{1}{2}} \norm{\nabla S(t) u_0}_{L_\infty(\Omega)} \leq C \norm{u_0}_{L_\infty(\Omega)} \text{ for all } t > 0. 
\end{equation}
In addition, $\nabla S(t) u_0 \in C(\overline{\Omega})$.
Similarly, the following gradient estimate for the composition of operators $S(\cdot)\bP \div$ was proven in~\cite{abestokesflow}:
\begin{equation}
	\label{eq:Gest}
	t^{\frac{3}{4}} \norm{\nabla S(t) \bP \div G}_{L_\infty(\Omega)}  \leq C\norm{G}_{L_\infty(\Omega)}^{\frac{1}{2}} \norm{\nabla G}_{L_\infty(\Omega)}^{\frac{1}{2}} \text{ for all } t > 0 
\end{equation}
for $G \in C^1_0(\Omega)$, and $\nabla S(t) \bP \div G \in C(\overline{\Omega})$.
The estimate~\eqref{eq:Gest} implies
\begin{equation}
\begin{aligned}
	\left\lVert \nabla \int_0^t S(t-s) \bP \div F \right\rVert_{L_\infty(\Omega)} &\leq C \int_0^t (t-s)^{-\frac{3}{4}} s^{-\frac{1}{4}} \norm{F(\cdot,s)}_{L_\infty(\Omega)}^{\frac{1}{2}} \norm{s^{\frac{1}{2}} \nabla F(\cdot,s)}_{L_\infty(\Omega)}^{\frac{1}{2}} \, ds \\ &\leq C \norm{F}_{L_\infty(\Omega \times \R_+)}^{\frac{1}{2}} \norm{s^{\frac{1}{2}} \nabla F}_{L_\infty(\Omega \times \R_+)}^{\frac{1}{2}}.
	\end{aligned}
\end{equation}
Hence, by the representation formula~\eqref{wrepformula} and the above estimates, $\nabla w(\cdot,t) \in C(\overline{\Omega})$, and
\begin{equation}
	t^{\frac{1}{2}} \norm{\nabla w(\cdot,t)}_{L_\infty(\Omega)} \leq C \left( \norm{u_0}_{L_\infty} + t^{\frac{1}{2}} \norm{F}_{L_\infty(\Omega \times \R_+)}^{\frac{1}{2}} \norm{s^{\frac{1}{2}}\nabla F}_{L_\infty(\Omega \times \R_+)}^{\frac{1}{2}} \right),
\end{equation}
for all $t \in \R_+$.
Since $\nabla \pi = \bQ(\Delta w)$, the proof is completed by applying Lemma~\ref{lem:ellipticest} and Lemma~\ref{cor:pressurecor1}. (In order to apply Lemma~\ref{cor:pressurecor1}, we use maximal $L_2$ regularity to obtain that $w(\cdot,t) \in H^2(\Omega)$ for a.e. $t \in \R_+$. By weak continuity in time, the resulting estimate is valid for all $t \in \R_+$.)
\end{proof}

We now continue with the proof of Proposition~\ref{pro:pressureestimates}.

First, we apply Lemma~\ref{lem:subpressurelem} to estimate on $p^1_{\rm h}$. Specifically, we consider $F = u \otimes u$ and $p^1_{\rm h} = \pi$. Combining~\eqref{eq:subpressurelemest} with the estimate~\eqref{eq:AbeGigaspaceholder} for $u$ and $\nabla u$ in Proposition~\ref{pro:AbeGiga} yields~\eqref{p1est}.

Next, we are going to deal with $\nabla \Phi = -\bQ(u \cdot \nabla u)$. We use the method in \cite[Lemma 3.3]{abestokesflow}. Specifically, we decompose $\Phi = \Phi_1 + \Phi_2$, where
\begin{equation}
	\Phi_1 = (-\Delta)^{-1} \div \div(u \otimes u),
\end{equation}
and we consider $u(\cdot,t)$ as a function on $\R^3$.
Notice that, for all $0 < t < T$,
\begin{equation}
	\label{Phi1bmoest}
	\norm{\Phi_1(\cdot,t)}_{\BMO(\R^3)} \leq C \norm{u(\cdot,t)}_{L_\infty(\Omega)}^2 \leq C\norm{u_0}_{L_\infty(\Omega)}^2,
\end{equation}
\begin{equation}
	\label{Phi1alphaest}
	t^{\frac{\alpha}{2}} [\Phi_1(\cdot,t)]_{C^\alpha(\R^3)} \leq C t^{\frac{\alpha}{2}} \norm{u(\cdot,t)}_{L_\infty(\Omega)} [u(\cdot,t)]_{C^\alpha(\Omega)} \leq C\norm{u_0}_{L_\infty(\Omega)}^2,
\end{equation}
by the estimates in Proposition~\ref{pro:AbeGiga}. We define $p_{u \otimes u} = \Phi_1$ to obtain~\eqref{puotimesuest} from~\eqref{Phi1bmoest}-\eqref{Phi1alphaest}. Finally, Lemma~\ref{lem:ellipticest}, Lemma~\ref{cor:pressurecor2} with $F = u \otimes u$, and~\eqref{Phi1alphaest} imply
\begin{equation}
	t^{\frac{\alpha}{2}} \sup_{x \in \Omega} \dist(x,\p\Omega)^{1-\alpha} |\nabla \Phi_2(x,t)| \leq C t^{\frac{\alpha}{2}} [\Phi_1(\cdot,t)]_{C^\alpha(\R^3)} \leq C \norm{u_0}_{L_\infty(\Omega)}^2.
\end{equation}
We define $p_{\rm h}^2 = \Phi_2$ to obtain~\eqref{p2est} and complete the proof. \end{proof}

\subsection{Conclusion}

\subsubsection*{Step 5: Showing $U$ is a mild solution}

We now adopt the notation from the beginning of Section~\ref{sec:mbas} and complete the proof of Theorem~\ref{thm:mbas}. 

Let $I = t_0 + [0,T_\sharp] \subset ]-\infty,0]$ be a closed interval, where $T_\sharp$ is as in Lemma~\ref{lem:Holdercontdecomp}. Denote $I/2 = t_0 + [T_\sharp/2,T_\sharp]$, $\mathring{I}$ the interior of $I$, etc. There exists $N \in \N$ such that for all $n \geq N$, we may decompose $v_n$ on $\Omega_n \times I$ as in Lemma~\ref{lem:Holdercontdecomp}. That is, 
\begin{equation}
	v_n = u_n + w_n \text{ on } \Omega_n \times I,
\end{equation}
where $u_n$ is the solution obtained in Proposition~\ref{pro:AbeGiga} satisfying
\begin{equation}
	\sup_{n \geq N} \norm{u_n}_{C^\alpha_{\para}(\R^3 \times I/2)} < \infty,
\end{equation}
and $w_n$ is a perturbation accounting for the forcing term $f_n$:
\begin{equation}
	\label{eq:wntozero}
	\norm{w_n}_{L_{12,\infty}(\R^n \times I)} \leq C\norm{f_n}_{L_{12,\frac{3}{2}}(Q_n)} \dto 0,
\end{equation}
where $u_n$ and $w_n$ are extended by zero to $\R^3 \times I$.
Hence, up to a subsequence,
\begin{equation}
	u_n \to U \text{ in } C(K \times I/2)
\end{equation}
for all compact $K \subset \R^3$. Here, we have used~\eqref{eq:wntozero} to conclude that 
\begin{equation}
	\lim_{n \to \infty} u_n = \lim_{n \to \infty} v_n \; (= U) \; \text{ on } \R^3 \times I.
\end{equation}
It suffices to analyze the solution $u_n$ and its associated pressure $p_n$, and we will no longer deal with~$w_n$.

\subsubsection*{Step 5A: Scenario I}
Let us recall the pressure estimates for $\nabla p_n$ obtained in Proposition~\ref{pro:pressureestimates}. To begin,
\begin{equation}
	\label{eq:pnuotimesuweakconv}
	(p_n)_{u \otimes u} \wstar P_{U \otimes U} \text{ in } L^\infty_t \BMO_x(\R^3 \times I/2),
\end{equation}
and it is possible to prove that $P_{U \otimes U} = (-\Delta)^{-1} \div \div U \otimes U$.
 
 In Scenario~I, $\dist(\cdot,\p\Omega_n) \to \infty \text{ in } C(K)$ for compact $K \subset \R^3$. Hence,
\begin{equation}
	\nabla (p_n)_{\rm h} \to 0 \text{ in } L_{\infty}(K \times I/2) \text{ for all compact } K \subset \R^3.
\end{equation}
Therefore, we have\footnote{We interpet the limit in the sense that for each $R > 0$, there exists $N \in \N$ such that for all $n \geq N$, $\nabla p_n$ is well defined on $B(R) \times I/2$.}
\begin{equation}
	\nabla p_n \wstar \nabla P \; (= \nabla P_{U \otimes U}) \text{ in } \mathcal{D}'(B(R) \times \mathring{I}/2) \text{ for all } R > 0.
\end{equation}
Since also $u_n \to U$ in $C(K \times I/2)$ for all compact $K \subset \R^3$, we obtain that $(U,\nabla P)$ is a weak solution of the Navier-Stokes equations in $\R^3 \times \mathring{I}/2$. Since $I$ was arbitrary and our estimates were independent of $I$, we obtain that $U$ is a bounded ancient solution of the Navier-Stokes equations in $\R^3$. Finally, since $P \in L^\infty_t \BMO_x(\R^3 \times ]-\infty,0[)$, the equivalent characterization of mild bounded solutions in $\R^3$ in~\cite{barkersereginmbas} implies that $U$ is a mild bounded ancient solution in $\R^3$. \qed

\subsubsection*{Step 5B: Scenario II}
Regarding the convergence of $(p_n)_{u \otimes u}$, we similarly have~\eqref{eq:pnuotimesuweakconv}. The main difference in Scenario~II concerns the harmonic pressure. Observe
\begin{equation}
	\dist(\cdot,\p\Omega_n) \to (x_3+a) \text{ in } C(K)
\end{equation}
for all compact $K \subset \R^3$. Hence, by the pressure estimates in Proposition~\ref{pro:pressureestimates}, there exists a subsequence such that
\begin{equation}
	\nabla (p_n)_{\rm h}^i \wstar \nabla P_{\rm h}^i \text{ in } L_\infty(K \times I/2) \text{ for all compact } K \subset \R^3_a, \; i=1,2,
\end{equation}
where $P_{\rm h}^i$ is harmonic ($i=1,2$).
Moreover, we retain the weighted estimates
\begin{equation}
	\esssup_{t \in I/2} \sup_{x \in \R^3_a} (x_3+a) |\nabla P_{\rm h}^1| \leq C,
\end{equation}
\begin{equation}
	\esssup_{t \in I/2} \sup_{x \in \R^3_a} (x_3+a)^{\frac{1}{2}} |\nabla P_{\rm h}^2| \leq C.
\end{equation}
Define $P = P_{U \otimes U} + \sum_{i=1,2} P_{\rm h}^i$.

Recall the increasing sequence of open sets in~\eqref{eq:Endef}. By the strong convergence $u_n \to U$ and weak-$\ast$ convergence $\nabla p_n \wstar \nabla P$ in $E_N \times \mathring{I}/2$ for each $N \in \N$, we obtain that $(U,\nabla P)$ is a weak solution of the Navier-Stokes equations in $\R^3_a \times \mathring{I}/2$. Since $I$ was arbitrary and our estimates independent of $I$, we obtain that $U$ is a bounded ancient solution of the Navier-Stokes equations in $\R^3_{a}$ with pressure gradient $\nabla P$.

It remains to prove that $U$ is a mild solution. Again, we use a characterization in terms of the pressure. In the half space, it is more convenient to use the decomposition
\begin{equation}
\label{halfspacepressuredecomp}
	P = P_{U \otimes U}^+ + P_{\rm h}^+,
\end{equation}
where $P_{U \otimes U}^+$ is the solution of the boundary value problem
\begin{equation}
\left\lbrace
\begin{aligned}
	-\Delta P_{U \otimes U}^+ &= \div \div U \otimes U \text{ in } \R^3_{a} \\ \frac{\p P_{U \otimes U}^+}{\p x_3} &= 0 \text{ on } \p\R^3_{a}
\end{aligned}
\right.
\end{equation}
obtained by reflecting $U \otimes U\big|_{\{x_3 > -a \}}$ evenly across the plane $\{ x_3 = -a \}$ and solving on the whole space. Hence,
\begin{equation}
	P_{U \otimes U}^+ \in L^\infty_t \BMO_x(\R^3_{a} \times ]-\infty,0[).
\end{equation}
Then $\nabla P_{\rm h}^+ = \sum_{i=1}^3 \nabla P_{\rm h}^i$,
where $P_{\rm h}^3 = P_{U \otimes U} - P_{U \otimes U}^+$ on $\R^3_{a}$.
Moreover,
\begin{equation}
	P_{\rm h}^3 \in L^\infty_t \BMO_x(\R^3_{a} \times ]-\infty,0[),
\end{equation}
and by gradient estimates for harmonic functions,
\begin{equation}
	|\nabla P_{\rm h}^3(x,t)| \leq \frac{C}{x_3} \fint_{B(x,\frac{x_3}{2})} \left| P_{\rm h}^3 - [P_{\rm h}^3]_{B(x,\frac{x_3}{2})} \right| \, dy  \leq \frac{C}{x_3} \norm{P_{\rm h}^3}_{L^\infty_t \BMO_x(\R^3_{a} \times ]-\infty,0[)},
\end{equation}
for all $x_3 > 0$ and almost every $t < 0$.

To summarize, $U \in C(\overline{\R^3_{a}} \times ]-\infty,0[)$ is bounded in $\R^3_{a} \times ]-\infty,0[$ and solves the Navier-Stokes equations in $\R^3_{a}$ with no-slip boundary condition. Its pressure satisfies~\eqref{halfspacepressuredecomp}, and
\begin{equation}
\label{eq:halfspacepressuredecay}
	|\nabla P_{\rm h}^+(x,t)| \leq C(t) \log \left( 2+\frac{1}{x_3} \right) \text{ whenever } x_3 > 1 \text{ and } t < 0.
\end{equation} Hence, by the characterization of mild bounded ancient solutions in~\cite{barkersereginmbas}, $U$ is a mild bounded ancient solution in $\R^3_{a}$. \qed

\begin{remark}
Technically, the requirement in~\cite{barkersereginmbas} is that \eqref{eq:halfspacepressuredecay} is satisfied for all $x_3 > 0$ (when $a = 0$). However, a careful inspection of the proof shows that the behavior near $x_3 = 0$ is not important. In fact, a requirement of the form $\nabla' P \to 0$ as $x_3 \to \infty$ is enough to rule out parasitic solutions, see~\cite[Theorem 5]{prangeliouville}.
\end{remark}

\begin{appendix}
\numberwithin{theorem}{section}

\section{Persistence of singularities}

\label{sec:persistenceofsing}

In this appendix, we recall certain facts related to boundary suitable weak solutions of the flattened Navier-Stokes equations. Our main goal is to prove the persistence of singularities\footnote{It is also sometimes called propagation or stability of singularities.} lemma near a curved boundary in~Proposition~\ref{stabilitysingularpointshalfspace}.

Previously, such stability properties have been established for interior singular points by Rusin and {\v S}ver{\'a}k in~\cite{rusinsver}. In~\cite{rusinsver} and the paper of Jia and {\v S}ver{\'a}k~\cite{Jiasver2013}, persistence of singularities was used to show existence of minimal blow-up  $\dot{H}^{\frac{1}{2}}$ and $L_{3}$ initial data for the three dimensional Navier-Stokes equations in the whole space. The authors adapted this approach to critical Besov spaces in~\cite{globalweakbesov}. The analogous stability lemma was later established for boundary singular points of the Navier-Stokes equation by the second author in his thesis~\cite{Barkerthesis}. See the thesis~\cite{Tuanthesis} of Pham for results related to minimal blow-up data in the half-space.

For the regularity theory of the Navier-Stokes equations against curved boundaries, our main resources are~\cite{sereginshilkinsolonnikovboundarypartialreg2014,mikhailovshilkincurvedboundary2006,sereginshilkinsurvey}. These works generalize the analogous theory for the flat boundaries developed in~\cite{seregincknflatboundary,sereginl3inftyflatboundary}. 




As in~\cite{sereginshilkinsolonnikovboundarypartialreg2014} and~\cite{mikhailovshilkincurvedboundary2006}, for $\varphi \in C^{2}(\overline{K(R)})$, we define the operators
\begin{equation}
	\hat{\nabla}= \hat{\nabla}_{\varphi}:=(\frac{\partial}{\partial x_1}-\frac{\partial \varphi}{\partial x_1}\frac{\partial}{\partial x_3}, \frac{\partial}{\partial x_2}-\frac{\partial \varphi}{\partial x_2}\frac{\partial}{\partial x_3}, \frac{\partial}{\partial x_3})
\end{equation}
and, with summation over repeated indices,
\begin{equation}
	\hat{\Delta}= \hat{\Delta}_{\varphi}:=a_{ij}(x) \frac{\partial^{2}}{\partial x_{i}\partial x_{j}}+ b_{i}(x)\frac{\partial}{\partial x_{i}}.
\end{equation}
Here,
\begin{equation}
	 a_{11}= a_{22}=1, \quad a_{33}(x)= 1+\left(\frac{\partial \varphi}{\partial x_{1}}\right)^2+\left(\frac{\partial \varphi}{\partial x_{2}}\right)^2, 
\end{equation}
\begin{equation}
	 a_{12}=a_{21}=0, \quad a_{13}=a_{31}= -\frac{\partial \varphi}{\partial x_{1}}, \quad a_{23}=a_{32}=-\frac{\partial \varphi}{\partial x_{2}}, 
\end{equation}
\begin{equation}
	 b_1=b_2=0, \quad b_3= -\frac{\partial^{2} \varphi}{\partial x_1^{2}}-\frac{\partial^{2} \varphi}{\partial x_2 ^{2}}.
\end{equation}

\begin{definition}[Boundary suitable weak solution of the flattened NSE]
\label{swsflatten}
Let $R>0$. We say that $(v,p,\varphi)$ is a \emph{boundary suitable weak solution of the flattened Navier-Stokes equations} in $Q^+(R)$ if the following conditions are satisfied:
\begin{enumerate}
\item $v\in L_{2,\infty} \cap W^{1,0}_{2} \cap W^{2,1}_{\frac{9}{8}, \frac{3}{2}}(Q^+(R))$, $p\in L_{\frac{3}{2}}\cap W^{1,0}_{\frac{9}{8}, \frac{3}{2}}(Q^{+}(R))$, and $\varphi \in C^{2}(\overline{K(R)})$,
\item $(v,p,\varphi)$ solve the flattened Navier-Stokes equations in the sense of distributions on $Q^+(R)$:
\begin{equation}
\left\lbrace
\begin{aligned}
	\frac{\partial v}{\partial t}+\hat{\nabla}_{\varphi}\cdot( v\otimes v)-\hat{\Delta}_{\varphi}v+ \hat{\nabla}_{\varphi}p&=0 & &\text{ in } Q^+(R) \\
	\hat{\nabla}_{\varphi}\cdot v&=0 & &\text{ in } Q^+(R) \\
	 v\big|_{x_3=0} &=0 & &\text{ on } \{ x_3 = 0 \} \times ]-R^2,0[
	\end{aligned}
	\right.
\end{equation}
with boundary condition in the trace sense, and
\item $(v,p, \varphi)$ satisfy the local energy inequality:
\begin{equation}\label{localenergyinequalityflattened}
\int_{B^{+}(R)} \zeta(x,t) |v(x,t)|^2 dx+2\int_{-R^2}^{t} \int_{B^{+}(R)} \zeta |\hat{\nabla}_{\varphi} v|^2 dxdt'\leq $$$$ 
\int_{-R^2}^{t} \int_{B^{+}(R)} |v|^2\left( \frac{\partial \zeta}{\partial t}+\hat{\Delta}_{\varphi} \zeta\right)+ v\cdot \hat{\nabla}_{\varphi} \zeta(|v|^2+2p) \, dx \, dt'
\end{equation}
for almost every $t\in ]-R^2,0[$ and all non-negative functions $\zeta \in C^{\infty}_{0}(B(R) \times ]-R^2,0])$.
\end{enumerate}
\end{definition}
In~\cite{sereginshilkinsolonnikovboundarypartialreg2014} and other papers, what we term the flattened Navier-Stokes equations are referred to as the perturbed Navier-Stokes equations. However, we use the term ``perturbed" to refer to the inclusion of lower order terms.

The following lemma is more-or-less standard and follows from the local energy inequality~\eqref{localenergyinequalityflattened} with an appropriate choice of test function and the Aubin-Lions lemma.
\begin{lemma}[Compactness]
\label{vqcompactnesslemma}
Let $(v^{(k)},q^{(k)},\varphi^{(k)})_{k \in \N}$ be a sequence of boundary suitable weak solutions of the flattened Navier-Stokes equations in $Q^+$ satisfying
\begin{equation}
	\sup_{k \in \N} \norm{v^{(k)}}_{L_3(Q^+(R))} + \norm{q^{(k)}}_{L_{\frac{3}{2}}(Q^+(R))} \leq M,
\end{equation}
\begin{equation}
	\varphi^{(k)} \to \varphi \text{ in } C^2(\overline{K(R)}).
\end{equation}
Then there exists $(v,q,\varphi)$ a boundary suitable weak solution of the flattened Navier-Stokes equations on $Q^+(R)$, for every $0 < R < 1$, with
\begin{equation}
	v^{(k)} \to v \text{ in } L_3(Q^+(R)),
\end{equation}
\begin{equation}
	q^{(k)} \wto q \text{ in } L_{\frac{3}{2}}(Q^+(R)).
\end{equation}
\end{lemma}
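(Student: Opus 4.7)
The plan is to derive uniform higher regularity from the local energy inequality, use an Aubin-Lions-type compactness argument to extract strong $L_3$ convergence, and then pass to the limit in both the equation and the energy inequality using the $C^2$ convergence of $\varphi^{(k)}$.

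First I would establish uniform local energy bounds. Fix $0<r<R<1$ and choose a nonnegative test function $\zeta \in C^\infty_0(B(R) \times \,]-R^2,0])$ with $\zeta \equiv 1$ on $B(r) \times [-r^2,0]$. Because $\varphi^{(k)} \to \varphi$ in $C^2(\overline{K(R)})$, the coefficients $a_{ij}^{(k)}$, $b_i^{(k)}$ of $\hat{\Delta}_{\varphi^{(k)}}$ are uniformly bounded, and the quadratic form $\hat{\nabla}_{\varphi^{(k)}} v \cdot \hat{\nabla}_{\varphi^{(k)}} v$ is uniformly coercive (controlling $|\nabla v|^2$ from below). Plugging $\zeta$ into \eqref{localenergyinequalityflattened} and applying H\"older's inequality to bound the right-hand side by $\norm{v^{(k)}}_{L_3(Q^+(R))}^3 + \norm{q^{(k)}}_{L_{3/2}(Q^+(R))} \norm{v^{(k)}}_{L_3(Q^+(R))} \lesssim M^3$, I obtain
\begin{equation}
\sup_{k \in \N} \left( \norm{v^{(k)}}_{L_{2,\infty}(Q^+(r))} + \norm{\nabla v^{(k)}}_{L_2(Q^+(r))} \right) \leq C(M,r,R).
\end{equation}
Combined with the $L_3$ bound, interpolation gives uniform bounds of $v^{(k)}$ in $L_{10/3}(Q^+(r))$.

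Next I would use the equation itself to control $\partial_t v^{(k)}$. Writing
\begin{equation}
\partial_t v^{(k)} = \hat{\Delta}_{\varphi^{(k)}} v^{(k)} - \hat{\nabla}_{\varphi^{(k)}} \cdot (v^{(k)} \otimes v^{(k)}) - \hat{\nabla}_{\varphi^{(k)}} q^{(k)},
\end{equation}
the right-hand side is uniformly bounded in $L_{9/8,3/2}(Q^+(r))$ (by the bootstrapped bounds in \eqref{eq:vqadditionalreg} applied locally, or directly from the $L_{10/3}$ and $L_{3/2}$ bounds plus $\nabla v^{(k)} \in L_2$ and $\nabla q^{(k)} \in L_{9/8,3/2}$ after a small shrinkage), hence bounded as distributions in an appropriate negative Sobolev space. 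A standard Aubin-Lions argument then yields a subsequence with $v^{(k)} \to v$ strongly in $L_3(Q^+(r))$, together with weak convergence $\nabla v^{(k)} \rightharpoonup \nabla v$ in $L_2(Q^+(r))$ and $q^{(k)} \rightharpoonup q$ in $L_{3/2}(Q^+(r))$. A diagonal extraction over an exhausting sequence of radii $r \uparrow 1$ gives a single limiting $(v,q)$ on $Q^+(R)$ for every $R<1$.

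The main work is verifying that $(v,q,\varphi)$ is still a boundary suitable weak solution. The distributional equations pass to the limit directly because the coefficients $a_{ij}^{(k)}, b_i^{(k)}$ converge uniformly to $a_{ij},b_i$, and the nonlinear term $v^{(k)} \otimes v^{(k)} \to v \otimes v$ in $L_{3/2}(Q^+(R))$ thanks to strong $L_3$ convergence. The no-slip trace on $\{x_3 = 0\}$ survives weak $W^{1,0}_2$ convergence. The additional regularity $v \in W^{2,1}_{9/8,3/2}$ and $q \in W^{1,0}_{9/8,3/2}$ follows from the local boundary regularity theory for the flattened Stokes system in \cite{shilkinvialov2013}. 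The subtle point, and the main obstacle, is passing to the limit in the local energy inequality \eqref{localenergyinequalityflattened}: the dissipation term $\int \zeta |\hat{\nabla}_{\varphi^{(k)}} v^{(k)}|^2$ must be handled via weak lower semicontinuity, using that the quadratic form induced by $\varphi^{(k)}$ converges (uniformly in $x$) to that of $\varphi$, so
\begin{equation}
\liminf_{k \to \infty} \int \zeta |\hat{\nabla}_{\varphi^{(k)}} v^{(k)}|^2 \, dx\,dt \geq \int \zeta |\hat{\nabla}_\varphi v|^2 \, dx\,dt,
\end{equation}
while all other terms in the inequality pass in the limit by strong $L_3$ convergence of $v^{(k)}$, weak $L_{3/2}$ convergence of $q^{(k)}$, and uniform $C^0$ convergence of $\hat{\nabla}_{\varphi^{(k)}} \zeta$ and $\hat{\Delta}_{\varphi^{(k)}}\zeta$.
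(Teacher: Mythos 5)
Your proposal is correct and takes the same route the paper indicates: the paper gives no proof beyond the remark that the lemma ``follows from the local energy inequality with an appropriate choice of test function and the Aubin-Lions lemma,'' and your argument supplies precisely those details (uniform interior/boundary energy bounds via a cutoff in the local energy inequality, control of $\p_t v^{(k)}$ via the equation, Aubin--Lions and diagonalization, and passage to the limit in the equation and energy inequality via uniform convergence of the coefficients, strong--weak products, and weak lower semicontinuity of the dissipation). One small imprecision worth noting: a uniform $W^{1,0}_{\frac{9}{8},\frac{3}{2}}$ bound on $\nabla q^{(k)}$ is not immediate from the hypotheses of the lemma, but as you also observe it is not needed, since the $L_{\frac{3}{2}}$ bound on $q^{(k)}$ itself already gives a uniform negative-Sobolev bound on $\p_t v^{(k)}$ sufficient for Aubin--Lions, and the additional regularity of the limit $(v,q)$ required by Definition~\ref{swsflatten} can then be recovered from the local boundary regularity of~\cite{shilkinvialov2013} applied to the limit.
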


In addition, we often impose the additional conditions
\begin{equation}
\label{eq:additionalphiconditions}
	 \varphi(0)=1, \; \nabla \varphi(0)=0, \text{ and } [\varphi]_{C^2(K(R))} \leq \frac{\mu_*}{2R},
\end{equation}
where $\mu^{*} > 0$ is a small positive constant defined in \cite[Lemma 3.1]{sereginshilkinsolonnikovboundarypartialreg2014} whose exact value is not important for us. Essentially, $0 < \mu^{*} \ll 1$ ensures maximal regularity estimates for the linear equation
\begin{equation}
\left\lbrace
\begin{aligned}
	\frac{\partial v}{\partial t}-\hat{\Delta}_{\varphi}v+\hat{\nabla}_{\varphi} q &= f & &\text{ in } \mathbb{R}^3_{+} \times \R_+ \\
	\hat{\nabla}_{\varphi} \cdot v&=0 & &\text{ in } \mathbb{R}^3_{+} \times \R_+ \\
	v|_{x_3=0} &= 0 & &\text{ in } \{x_3 = 0\} \times \R_+ \\
	v(\cdot,0) &= 0 & &\text{ in } \R^3_+.
	\end{aligned}
	\right.
\end{equation}
One can prove these estimates by perturbing around the solution with $\varphi = 0$ and estimating $(\Delta - \hat{\Delta}) v$, $(\nabla - \hat{\nabla}) \cdot v$, and $(\nabla - \hat{\nabla}) q$. This also requires treating non-zero divergence. This perturbation argument is not `semilinear in nature'; it requires the full maximal regularity for the half-space in order to conclude.

In the following, we will adopt the notation\footnote{In the literature, $C$ is sometimes defined as $C^3$ instead of $C$ in~\eqref{velocityscaleinvariant}, and similarly for $D$ and $D^{\frac{3}{2}}$ in~\eqref{pressurescaleinvariant}.}
\begin{equation}\label{velocityscaleinvariant}
C(v,R):=\left(\frac{1}{R^2} \int_{Q^{+}(R)} |v|^3 \, dx\, dt\right)^{\frac{1}{3}}
\end{equation}
and
\begin{equation}\label{pressurescaleinvariant}
D(q,R):= \left( \frac{1}{R^2} \int_{Q^{+}(R)} |q-[q]_{B^{+}(R)}|^{\frac{3}{2}} \, dx \,dt\right)^{\frac{2}{3}}.
\end{equation}
We will omit the dependence on $v$ and $q$ when it is clear from context.

 \begin{theorem}[$\epsilon$-regularity, Theorem 4.1 in \cite{sereginshilkinsolonnikovboundarypartialreg2014}]\label{partialregflattened}
 There exist absolute constants $\epsilon_{*},c_*>0$ satisfying the following property. Let $(v,q,\varphi)$ be a boundary suitable weak solution of the flattened Navier-Stokes equations in $Q^+$ satisfying~\eqref{eq:additionalphiconditions}. If
 \begin{equation}\label{epsilonregcondition}
 C(1)+D(1)<\epsilon_{*}
 \end{equation}
 then $v$ is H\"{o}lder continuous in $\overline{Q^{+}(1/2)}$, and
 \begin{equation}
	\sup_{{Q}^{+}(1/2)} |v| \leq c_*.
 \end{equation}
 \end{theorem}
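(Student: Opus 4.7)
The plan is to prove this in the spirit of Caffarelli--Kohn--Nirenberg, adapted to the flattened boundary setting as in Seregin--Shilkin--Solonnikov. The goal is a one-step decay estimate for the scale-invariant quantities $C(r)$ and $D(r)$, from which Hölder continuity follows by iteration and Morrey--Campanato.

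\textbf{Step 1 (pressure decomposition).} Fix a ball $B^+(\rho)$ with $1/2 \leq \rho \leq 1$. Following Seregin, split $q = q^{\sharp} + q^{\mathrm h}$ on $Q^+(\rho)$, where $q^{\sharp}$ solves a Dirichlet problem that absorbs the nonlinearity $\hat\nabla_\varphi \hat\nabla_\varphi : (v \otimes v)$ locally and $q^{\mathrm h}$ is $\hat\Delta_\varphi$-harmonic in $B^+(\rho)$. The hypothesis $[\varphi]_{C^2(K(R))} \leq \mu^*/(2R)$ means the principal symbol of $\hat\Delta_\varphi$ is a small perturbation of the Laplacian with controlled conormal part at $\{x_3=0\}$, so Calderón--Zygmund-type estimates give $\|q^{\sharp}\|_{L_{3/2}(Q^+(\rho))} \lesssim \|v\|_{L_3(Q^+(\rho))}^2$, while $q^{\mathrm h}$ enjoys the sub-mean value / interior derivative estimates of an almost-harmonic function, yielding $D(q^{\mathrm h}, \theta \rho) \lesssim \theta^{3/2} D(q^{\mathrm h}, \rho)$ for $\theta \in (0,1/2)$.

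\textbf{Step 2 (local energy control).} Insert a suitable cutoff $\zeta$ supported in $B(\rho)$, equal to one on $B(\rho/2)$, into the local energy inequality \eqref{localenergyinequalityflattened}. This yields, in the standard way,
\begin{equation*}
E(\rho/2) := \sup_{-(\rho/2)^2 < t < 0} \int_{B^+(\rho/2)} |v|^2 + \int_{Q^+(\rho/2)} |\hat\nabla_\varphi v|^2 \,dx\,dt \lesssim \rho\,C(\rho)^2 + \rho\, C(\rho)^3 + \rho\, C(\rho) D(\rho).
\end{equation*}
The price for working with $\hat\nabla_\varphi$ instead of $\nabla$ costs only lower-order terms absorbable using $\|\nabla\varphi\|_{L_\infty}$ small. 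By interpolation (multiplicative inequality $\|v\|_{L_3} \lesssim \|v\|_{L_{2,\infty}}^{1/3}\|\nabla v\|_{L_2}^{2/3}$ on $B^+$), this upgrades to a bound on $C(\rho/2)$.

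\textbf{Step 3 (one-step decay).} Combining Step 1 (applied at scale $\rho = 1/2$) with Step 2 and the fact that $q^{\mathrm h}$ decays like a harmonic function, I obtain, for $\theta \in (0, 1/8)$ to be chosen,
\begin{equation*}
C(\theta)^3 + D(\theta)^{3/2} \leq c_0 \theta^{\beta}(C(1)^3 + D(1)^{3/2}) + c_1(C(1) + D(1))^{3}
\end{equation*}
for some $\beta > 0$ (coming from the harmonic decay of $q^{\mathrm h}$ and scaling of energy). First fix $\theta$ so small that $c_0 \theta^{\beta} \leq 1/4$. Then if $\epsilon_*$ is small enough that $c_1 \epsilon_*^{3} \leq \tfrac14(C(1)^3 + D(1)^{3/2})$ is absorbable, one gets $C(\theta)^3 + D(\theta)^{3/2} \leq \tfrac12 (C(1)^3 + D(1)^{3/2}) < \epsilon_*^{3}$, so the smallness is inherited at scale $\theta$. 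By scaling invariance of the flattened system (the additional factor of $\mu^*$ in $[\varphi]_{C^2}$ only gets smaller under rescaling $x\mapsto x/\theta$), the same step can be iterated.

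\textbf{Step 4 (iteration and conclusion).} Iterating gives, for all integers $k \geq 0$, $C(\theta^k)^3 + D(\theta^k)^{3/2} \leq 2^{-k}(C(1)^3 + D(1)^{3/2})$, i.e.\ power-law Morrey decay of $v$ in $L^3_{x,t}$ and of $q$ in $L^{3/2}_{x,t}$ around the origin. A translation-in-base-point version of the same iteration at nearby boundary/interior points (using that the hypothesis on $\varphi$ transfers to small balls with slightly adjusted constants) upgrades this to a uniform Morrey bound throughout $\overline{Q^+(1/2)}$. The boundary Campanato embedding then yields Hölder continuity of $v$ up to $\{x_3=0\}$, with $\sup_{Q^+(1/2)}|v| \leq c_*$ for an absolute constant.

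\textbf{Main obstacle.} The delicate point is making the pressure decomposition work at the boundary. Unlike the interior, $q^{\mathrm h}$ does not satisfy an obvious Dirichlet/Neumann condition on $\{x_3=0\}$, so the decay estimate for $D(q^{\mathrm h}, \theta)$ has to be obtained from boundary regularity for the flattened Stokes system rather than from pure harmonic-function theory; this is exactly where smallness of $\mu^*$ enters via the perturbation argument mentioned after \eqref{eq:additionalphiconditions}. The nonlinear term $q^{\sharp}$ is easier because it sits on the RHS of a Poisson problem whose scaling is kind.
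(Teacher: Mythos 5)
This is Theorem~4.1 of Seregin--Shilkin--Solonnikov, which the paper cites without reproducing its proof; there is nothing in the paper to compare your argument against directly, so I'll assess the sketch on its own merits and against the machinery the paper \emph{does} import (in particular Proposition~\ref{scaleinvariantestimatesflattened}).

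The overall architecture — one-step decay for the scale-invariant quantities, iteration, then Morrey--Campanato — is the right scheme, and Steps~2 and~4 are standard and fine. The gap is exactly where you flag it in your ``main obstacle'' paragraph, but you have not actually closed it. In Step~1 you claim $D(q^{\rm h},\theta\rho)\lesssim\theta^{3/2}D(q^{\rm h},\rho)$ because $q^{\rm h}$ is ``almost harmonic'' and therefore enjoys interior derivative estimates. Near $\{x_3=0\}$ this is simply not available: $q^{\rm h}$ has no boundary condition, and interior harmonic estimates give you nothing uniformly up to $\overline{B^+(\rho/2)}$. As you say yourself, the decay for the ``harmonic'' piece must be extracted from boundary regularity for the (flattened) Stokes system, not from harmonic-function theory — but that is precisely the argument that needs to be written, and it is where the whole difficulty and the smallness of $\mu^*$ live. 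The correct route (as in Seregin's boundary papers) decomposes the pair $(v,q)$, not just $q$: one solves the linear flattened Stokes problem in $B^+(\rho)$ with forcing $-\hat\nabla_\varphi\cdot(v\otimes v)$ and zero Dirichlet data, and what is left over is a solution of the \emph{homogeneous} flattened Stokes system with no-slip on $\{x_3=0\}$, for which boundary-coercive estimates up to second derivatives (the maximal-regularity perturbation argument keyed to small $\mu^*$) yield the needed decay of the pressure oscillation. Your Step~1 as written asserts the conclusion of that argument while skipping the argument itself.

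Two smaller points. First, the decay exponent $\theta^{3/2}$ you assert does not match the $\theta^{4/3}$ appearing in the pressure-decay estimate the paper actually quotes (Proposition~\ref{scaleinvariantestimatesflattened}); more importantly, your Step~3 one-step inequality is missing the cross term $\theta^{-1}C(\rho)D(\rho)$ that genuinely appears there — absorbing it is where the smallness of $C$ (hence of $\epsilon_*$) is used, and your inequality as written silently hides that dependence. Second, the conclusion $\sup_{Q^+(1/2)}|v|\le c_*$ is not a consequence of Morrey decay of the $L^3$ averages alone; you need the boundary Campanato embedding into $C^\alpha(\overline{Q^+(1/2)})$ and then the trivial sup bound, or equivalently a bootstrap via the Stokes system — this is fine, but should be said. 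None of this changes the verdict: the skeleton is the standard one, but the proposal does not supply the boundary pressure/Stokes decay step, which is the actual content of the theorem.
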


 \begin{pro}[Pressure decay estimate, p. 2930 of \cite{mikhailovshilkincurvedboundary2006}]
 \label{scaleinvariantestimatesflattened}
 Let $(v,q,\varphi)$ be a boundary suitable weak solution of the flattened Navier-Stokes equation in $Q^{+}$ satisfying~\eqref{eq:additionalphiconditions}. Then, for any $\rho\in ]0,1[$ and $\theta\in ]0,\frac{1}{2}[$, we have
 \begin{equation}\label{pressureestimate}
 D(\theta \rho)\leq [c_3\theta^{\frac{4}{3}}+c_4\theta^{-1} C(\rho)]D(\rho)+ c_5 \theta^{\frac{4}{3}}[C(\rho)+C^{\frac{3}{2}}(\rho)]+c_6 \theta^{-1}[C^2(\rho)+C^3(\rho)].
 \end{equation} 
 Here, $c_3$-$c_6$ are universal positive constants.
 \end{pro}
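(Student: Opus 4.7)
The plan is to adapt Seregin's pressure decomposition from the flat case (see, e.g., \cite{seregincknflatboundary}) to the flattened setting by using that $\mu^*$ is chosen small enough that $\hat{\Delta}_\varphi$ and $\hat{\nabla}_\varphi$ are small perturbations of $\Delta$ and $\nabla$. First I would derive the pressure equation by taking the $\hat{\nabla}_\varphi$-divergence of the momentum equation, producing
\begin{equation*}
-\hat{\Delta}_\varphi q = \hat{\nabla}_\varphi \cdot \bigl(\hat{\nabla}_\varphi \cdot (v \otimes v)\bigr) + R_\varphi(v,q),
\end{equation*}
where $R_\varphi$ collects commutator/curvature terms coming from the first-order piece $b_i \partial_i$ in $\hat{\Delta}_\varphi$ and from the difference $\hat{\nabla}_\varphi - \nabla$; crucially, each term of $R_\varphi$ carries a factor of $\nabla \varphi$ or $\nabla^2 \varphi$ and is linear in $(v, q)$ (with one less spatial derivative than the principal term). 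The boundary condition on $\{x_3 = 0\}$ is read off the third component of the momentum equation using $v|_{x_3 = 0} = 0$.

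Next, on $B^+(\rho)$ I split $q = p^{(1)} + p^{(2)}$, where $p^{(1)}$ solves a Neumann problem for $\hat{\Delta}_\varphi$ with the full nonlinearity as data (cut off at scale $\rho$) and $p^{(2)}$ is then $\hat{\Delta}_\varphi$-harmonic with zero Neumann condition on the flat part. Using $\mu^*$-smallness, standard $L_{3/2}$ elliptic theory (treating $\hat{\Delta}_\varphi$ by perturbation from $\Delta$) gives a bound of the form
\begin{equation*}
\rho^{-2} \int_{Q^+(\rho)} |p^{(1)} - [p^{(1)}]|^{3/2} \les C^3(\rho) + C^{\frac{9}{2}}(\rho) + C^{\frac{3}{2}}(\rho) + C^{\frac{9}{4}}(\rho) + C^{\frac{3}{2}}(\rho) D^{\frac{3}{2}}(\rho),
\end{equation*}
where the first two come from the principal quadratic nonlinearity $v \otimes v$, the next two from the terms in $R_\varphi$ that are linear in $v$ (whose coefficient carries the extra $\nabla \varphi, \nabla^2 \varphi$), and the last from the first-order piece of $\hat{\Delta}_\varphi$ acting on $q$. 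Raising to the $2/3$ power and using $\theta \rho < \rho$ to introduce the $\theta^{-1}$ prefactors produces the $c_6\theta^{-1}[C^2 + C^3]$, $c_5[C + C^{3/2}]$, and $c_4\theta^{-1}C(\rho)D(\rho)$ contributions.

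For $p^{(2)}$, the $\hat{\Delta}_\varphi$-harmonicity combined with smallness of $\mu^*$ lets me recover interior gradient estimates analogous to those for classical harmonic functions (by perturbation):
\begin{equation*}
\sup_{B^+(\rho/2)} |\nabla p^{(2)}| \les \rho^{-1} \fint_{B^+(\rho)} |p^{(2)} - [p^{(2)}]_{B^+(\rho)}|.
\end{equation*}
A Poincar{\'e}-type computation then yields $D(p^{(2)}, \theta\rho) \les \theta^{4/3} D(p^{(2)}, \rho) \les \theta^{4/3} D(\rho) + (\text{cross terms from }p^{(1)})$, producing the $c_3 \theta^{4/3} D(\rho)$ term and absorbing the leftover $\theta^{4/3}[C + C^{3/2}]$ into $c_5$. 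Combining $p^{(1)}$ and $p^{(2)}$ by Minkowski gives the claimed estimate.

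The main obstacle I anticipate is the bookkeeping for $R_\varphi$: one must verify that every curvature-induced contribution factors cleanly into sublinear powers of $C(\rho)$ (rather than producing, say, $C^2(\rho) D(\rho)$ terms that would spoil the estimate). This boils down to exploiting both the $\mu^*$-smallness of the coefficients $a_{ij} - \delta_{ij}$, $b_i$ and the fact that $R_\varphi$ is strictly lower-order in $v$ than the principal pressure generator. A secondary technical point is the solvability and $L_{3/2}$-regularity of the Neumann problem for $\hat{\Delta}_\varphi$ on the half-ball with mixed (flat/curved) boundary, which again is handled by perturbation from the flat Laplacian given the smallness hypothesis \eqref{eq:additionalphiconditions}.
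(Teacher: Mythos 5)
The paper does not prove this proposition; it cites it directly to p.~2930 of Mikhailov--Shilkin~\cite{mikhailovshilkincurvedboundary2006}. Your blind proof plan, however, follows an approach that is known not to work near a no-slip boundary, and the paper's own introduction explicitly warns against it.

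The central problem is your decomposition of $q$ into a solution of a Neumann problem for the scalar operator $\hat\Delta_\varphi$ plus an $\hat\Delta_\varphi$-harmonic remainder. This is the standard \emph{interior} decomposition ($p_{u\otimes u} = (-\Delta)^{-1}\div\div(v\otimes v)$ plus a harmonic function), but against the boundary it breaks down because $q$ does not satisfy a tractable Neumann condition on $\{x_3 = 0\}$. You write that ``the boundary condition on $\{x_3 = 0\}$ is read off the third component of the momentum equation using $v|_{x_3=0}=0$,'' but carrying this out gives $\partial_3 q|_{x_3=0} = \partial_3^2 v_3|_{x_3=0}$ (all tangential and time derivatives of $v$ vanish on the boundary), a trace of a \emph{second} derivative of $v$ over which you have no scale-invariant control. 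This is precisely the obstruction the paper flags in its introduction: ``Against the boundary, the above decomposition is not as effective, since $h$ does not evidently satisfy a useful boundary condition. For this purpose, Seregin~\cite{seregincknflatboundary} introduced a new decomposition of the pressure.'' The decomposition actually used in \cite{mikhailovshilkincurvedboundary2006} and \cite{seregincknflatboundary} splits the pair $(v,q)$ on $Q^+(\rho)$: one piece solves the full time-dependent linear (flattened) Stokes system with forcing $-\hat\nabla_\varphi\cdot(v\otimes v)$, homogeneous Dirichlet data, and zero initial data, yielding $W^{2,1}_{\frac{9}{8},\frac{3}{2}}$-type bounds; the remainder solves the \emph{homogeneous} time-dependent Stokes system, whose coercive local regularity (Cattabriga--Solonnikov type, perturbed by $\mu^*$-smallness) supplies the $\theta^{\frac{4}{3}}$ decay of $D$.

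A second error in your plan is the attribution of the linear $C(\rho)$, $C^{\frac{3}{2}}(\rho)$ and cross $\theta^{-1}C(\rho)D(\rho)$ terms to the commutator $R_\varphi$. These terms are \emph{not} curvature effects. They are already present in the flat-boundary case $\varphi \equiv 0$, where $R_\varphi \equiv 0$; see \cite{seregincknflatboundary}. They arise from the boundary coupling inside the time-dependent Stokes decomposition (in particular, the simultaneous estimation of $v^1$ and $q^1$, and the divergence correction/Bogovskii step needed to localise the homogeneous Stokes part to the half-ball). The smallness condition \eqref{eq:additionalphiconditions} is used to absorb the curvature corrections perturbatively into the flat-boundary maximal-regularity estimates; it does not produce new leading-order terms in \eqref{pressureestimate}. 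Finally, your scalar-elliptic setup is purely spatial, whereas the actual proof is genuinely parabolic: the time-derivative of the auxiliary velocity field must be tracked in the decomposition, which a snapshot-in-time Neumann problem cannot capture.
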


 Let us now present the main proposition. 
 \begin{pro}[Persistence of singularities]
 \label{stabilitysingularpointshalfspace}
Let $(v^{(k)},p^{(k)},\varphi^{(k)})_{k \in \N}$ 
 be a sequence of boundary suitable weak solutions to the flattened Navier-Stokes equations in $Q^{+}$ with $\varphi^{(k)}$ satisfying~\eqref{eq:additionalphiconditions}.
If
\begin{equation}\label{strongconverguk}
v^{(k)} \to v \text{ in } L_{3}(Q^+),
\end{equation}
\begin{equation}\label{weakconvergpressure}
p^{(k)} \wto p \text{ in } L_{\frac{3}{2}}(Q^{+}),
\end{equation}
and
\begin{equation}
	\label{eq:limsupvk}
	\limsup_{k \to \infty} \norm{v^{(k)}}_{L_\infty(B^+(R))} = \infty \text{ for all } 0 < R < 1,
\end{equation}
 Then
 \begin{equation}
 	\label{eq:vsingpoint}
	v \text{ has a singular point at the space-time origin}.
 \end{equation}
\end{pro}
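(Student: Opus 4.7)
The plan is to argue by contradiction. Suppose the space-time origin is a regular point of $v$, so that $v \in L_\infty(Q^+(r_0))$ for some $r_0 > 0$. The trivial estimate $C(v,\rho) \leq \rho\,\|v\|_{L_\infty(Q^+(r_0))}$ for $\rho \leq r_0$ shows that $C(v,\rho) \to 0$ as $\rho \to 0^+$. My objective is to produce, for each sufficiently large $k$, a scale $\rho_* > 0$ (independent of $k$) at which
\[ C(v^{(k)}, \rho_*) + D(p^{(k)}, \rho_*) < \epsilon_*, \]
where $\epsilon_*$ is the absolute constant of Theorem~\ref{partialregflattened}. The parabolic rescaling $y = x/\rho_*$, $s = t/\rho_*^2$ preserves the class of boundary suitable weak solutions of the flattened system and improves~\eqref{eq:additionalphiconditions} (since $[\varphi^{(k),\rho_*}]_{C^2} = \rho_*[\varphi^{(k)}]_{C^2}$), so Theorem~\ref{partialregflattened} applied to the rescaled solution yields a uniform bound $\sup_{Q^+(\rho_*/2)} |v^{(k)}| \leq c_*/\rho_*$, directly contradicting~\eqref{eq:limsupvk} at $R = \rho_*/2$.

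The main obstacle is controlling $D(p^{(k)}, \rho_*)$ at small scales: the convergence $p^{(k)} \wto p$ is only weak in $L_{3/2}$, so small-scale concentration of $p^{(k)}$ cannot be ruled out from the limit. I would overcome this by iterating the scale-invariant pressure decay estimate~\eqref{pressureestimate} of Proposition~\ref{scaleinvariantestimatesflattened} at successive dyadic scales $\rho_n := \theta^n \rho_0$. Fix $\theta \in ]0,1/2[$ small enough that $c_3\theta^{4/3} \leq 1/4$; let $\eta > 0$ be a small parameter to be determined, chosen first so that $c_4\theta^{-1}\eta \leq 1/4$; choose $\rho_0 \leq r_0$ with $C(v,\rho_0) \leq \eta/2$; and set $M := \sup_k D(p^{(k)}, 1)$, which is finite by the uniform $L_{3/2}$ bound implicit in weak convergence.

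For $k$ large (depending on $\rho_0$ and on the number of iterations to be performed), the strong convergence $v^{(k)} \to v$ in $L_3(Q^+)$ yields $C(v^{(k)}, \rho_n) \leq \eta$ simultaneously for each prescribed $n$. Applying~\eqref{pressureestimate} to the parabolic rescaling of $(v^{(k)}, p^{(k)}, \varphi^{(k)})$ at scale $\rho_n$ (whose boundary function still satisfies~\eqref{eq:additionalphiconditions}) then produces the recursion
\[ D(p^{(k)}, \rho_{n+1}) \leq \tfrac{1}{2}\,D(p^{(k)}, \rho_n) + C(\theta)\,\eta, \]
where $C(\theta)\eta$ bounds the pure error terms $c_5\theta^{4/3}[\eta+\eta^{3/2}] + c_6\theta^{-1}[\eta^2+\eta^3]$. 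Iterating $N$ times gives $D(p^{(k)}, \rho_N) \leq 2^{-N} M + 2C(\theta)\eta$.

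To close the argument, choose $N$ so that $2^{-N} M < \epsilon_*/4$ and then further shrink $\eta$ so that $2 C(\theta)\eta < \epsilon_*/4$ and $\eta < \epsilon_*/2$. For all $k$ sufficiently large this yields $C(v^{(k)}, \rho_N) + D(p^{(k)}, \rho_N) < \epsilon_*$ at the fixed scale $\rho_* := \rho_N$, and the rescaled $\epsilon$-regularity theorem then forces a uniform $L_\infty$ bound for $v^{(k)}$ on $Q^+(\rho_N/2)$, contradicting~\eqref{eq:limsupvk} and proving~\eqref{eq:vsingpoint}.
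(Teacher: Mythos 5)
Your proposal is essentially the same argument as the paper's: prove the contrapositive (or argue by contradiction) by exploiting the boundedness of $v$ near the origin to make $C(v,\rho)$ small, then iterate the scale-invariant pressure decay estimate (Proposition~\ref{scaleinvariantestimatesflattened}) to drive $D(p^{(k)},\cdot)$ below the $\epsilon$-regularity threshold at a fixed small scale, and conclude with Theorem~\ref{partialregflattened}. The key ideas --- the role of strong convergence in $L_3$ to transfer the smallness of $C$ to $v^{(k)}$, the fact that the pressure iteration only needs the smallness of $C$ and not any convergence of $p^{(k)}$, and the final rescaled application of $\epsilon$-regularity --- are all present and correct.

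There is one small bookkeeping slip: the iteration you run starts at $\rho_0$, not at $\rho=1$, so after $N$ steps you obtain $D(p^{(k)},\rho_N) \leq 2^{-N} D(p^{(k)},\rho_0) + 2C(\theta)\eta$, not $2^{-N} M + \cdots$ with $M = \sup_k D(p^{(k)},1)$. The starting value $D(p^{(k)},\rho_0)$ is controlled by $C M \rho_0^{-4/3}$ (this is exactly the paper's $M'/R_\epsilon^{4/3}$), which is still uniformly bounded in $k$, so the argument goes through; but it means $N$ must be chosen \emph{after} $\eta$ and $\rho_0$ are fixed, since $\rho_0$ depends on $\eta$ and the $\rho_0^{-4/3}$ factor grows as $\eta$ shrinks. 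Your phrasing ``choose $N$ so that $2^{-N}M < \epsilon_*/4$ and then further shrink $\eta$'' has the quantifiers in the wrong order: shrinking $\eta$ retroactively would force $\rho_0$ smaller and hence the $\rho_0^{-4/3}$ factor larger, invalidating the earlier choice of $N$. The fix is to finalize $\eta$ (hence $\rho_0$) first and then pick $N$, exactly as the paper does with its parameters $\epsilon$, $R_\epsilon$, and $j$.
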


The arguments are based on those in the the second author's thesis~\cite{Barkerthesis} for flat boundaries, which in turn essentially follow arguments in Seregin's paper~\cite{sereginl3inftyflatboundary}.
Note that one may remove the smallness condition~\eqref{eq:additionalphiconditions} by zooming in if,  for example, $\varphi^{(k)} \to \varphi$ in $C^2(K)$.

\begin{proof}[Proof of Proposition~\ref{stabilitysingularpointshalfspace}]

We prove the contrapositive, i.e., the failure of~\eqref{eq:vsingpoint} implies the failure of~\eqref{eq:limsupvk}. Suppose there exists $0 < R_0 \leq 1$ with
\begin{equation}\label{ubounded}
v\in L_{\infty}(Q^{+}(R_0)).
\end{equation}
By zooming in, we may assume that $R_0 = 1$. The first step is to deal with $C(R)$.
For any $0 < R  \leq 1$,
\begin{equation}\label{scaleinvariantu}
C(v,R)^3 = \frac{1}{R^2} \int_{Q^+(R)} |v|^3 \,dx \,dt \leq R^3 \norm{v}_{L_{\infty}(Q^{+})}^3 |Q^+|
\end{equation}
Let $0<\varepsilon \leq 1/8$ be arbitrary. Define
\begin{equation}\label{R0def}
R_{\epsilon} =  \frac{{\varepsilon}^{\frac{1}{3}}}{|Q^+| (\norm{v}_{L_{\infty}(Q^{+}(\varepsilon_0))}+1)^{\frac 1 3}} \leq \frac{1}{2}.
\end{equation}
Then \eqref{scaleinvariantu} implies that, for any $0<R\leq R_{\epsilon}$, 
\begin{equation}\label{uest}
C(v,R)^3 \leq \frac{\varepsilon}{2}.
\end{equation}
The assumption (\ref{strongconverguk}) implies that there exists $K_{\varepsilon}: ]0, R_{\epsilon}] \to \N$ such that
\begin{equation}\label{differencessmall}
\frac{1}{R^2} \int_{Q^{+}(1)} |v^{(k)}-v|^3 \, dx \, dt \leq \frac{\varepsilon}{2} \text{ for all } k\geq K_{\varepsilon}(R),
\end{equation}
for all $0<R\leq R_{\epsilon}$.
 From (\ref{uest}) and (\ref{differencessmall}), we have
 \begin{equation}\label{ukest}
 C(v^{(k)},R)^3 = \frac{1}{R^2}\int_{Q^+(R)} |v^{(k)}|^3 \, dx\,dt\leq\varepsilon \text{ for all } k\geq K_{\varepsilon}(R).
 \end{equation}

The next step is to set up the iteration. Using Proposition~\ref{scaleinvariantestimatesflattened} and (\ref{ukest}), we see that, for all $0<\tau<1$, $0 < R \leq R_{\epsilon}$, and $k\geq K_{\epsilon}(R)$, we have
\begin{equation}\label{firstpressureest}
D\left(q^{(k)}, \frac{\tau R}{2}\right) \leq \left[c_{3}\tau^{\frac{4}{3}}+c_4 \tau^{-1} \epsilon^{\frac{1}{3}}\right]D\left(q^{(k)}, R\right)+ c_{5}\tau^{\frac{4}{3}}\left[\epsilon^{\frac{1}{3}}+\epsilon^{\frac{1}{2}}\right]+c_{6}\tau^{-1}\left[\epsilon^{\frac{2}{3}}+\epsilon\right].
\end{equation}
Fix $0 < \tau < 1$ so that $0<\tau<1/(64c_{3}^3)$ (hence, $c_{3}\tau^{\frac{4}{3}}<\tau/4$). With this value of $\tau$, we consider $\epsilon$ such that $0<\epsilon<\tau^6/(4c_{4})^3$ (so that $c_{4} \tau^{-1} \epsilon^{\frac{1}{3}}< \tau/4$).
With these choices, writing $\theta = \tau/2$, and simplifying the RHS of~\eqref{firstpressureest}, we obtain
\begin{equation}\label{secondpressureest}
D\left(q^{(k)}, \theta R \right) \leq \theta D\left(q^{(k)}, R\right)+ \bar{c} \epsilon,
\end{equation}
for all $k\geq K_{\epsilon}(R)$, where $\bar{c}$ depended on our choice of $\tau$.
Using  this and (\ref{ukest}), we see that the following iterative relations hold for all $j \in \N$, $i=0,\ldots,j$, and
$k\geq \max_{i=1,\ldots,j+1}  K_{\varepsilon}\left(\theta^{i} R_{\epsilon}\right)  =: \bar{K}_{\epsilon,j}$.
Namely,
\begin{equation}\label{iterationrelation1}
C\left(v^{(k)}, \theta^{i+1} R_{\epsilon}\right) \leq \varepsilon^{\frac{1}{3}}
\end{equation}
and 
\begin{equation}\label{iterationrelation2}
D\left(q^{(k)}, \theta^{i+1}R_{\epsilon}\right)\leq \theta D\left(q^{(k)}, \theta^{i}R_{\epsilon}\right)+\bar{c} \epsilon.
\end{equation}
Iterating (\ref{iterationrelation2}), it can be inferred that for $i=1,\ldots,j$,
\begin{equation}\label{gradpresiterationperformed}
D\left(q^{(k)}, \theta^{i+1}R_{\epsilon}\right)\leq \theta^{i+1} D\left(q^{(k)}, {R_{\epsilon}}\right)+\frac{\bar{c} \epsilon}{1-\theta},
\end{equation}
 provided that $k\geq \bar{K}_{\epsilon,j}$. The factor $1/(1-\theta)$ comes from summing the geometric series with ratio~$\theta$.
 Now, since $\norm{q^{(k)}}_{L_{\frac{3}{2}}(Q^{+})}\leq M$,
 we have
 \begin{equation}\label{presboundR0}
 D\left(q^{(k)}, { R_{\epsilon}}\right)\leq M'/R_{\epsilon}^{\frac{4}{3}}.
 \end{equation}
Using (\ref{iterationrelation1}), (\ref{gradpresiterationperformed}), and (\ref{presboundR0}), it can be inferred that for $k\geq \bar{K}_{\epsilon,j}$, one has the bound
\begin{equation}\label{seqCKN}
C\left(v^{(k)}, \theta^{j+1}R_{\epsilon}\right)+D\left(q^{(k)}, \theta^{j+1}R_{\epsilon}\right)\leq\varepsilon^{\frac{1}{3}}+\theta^{j+1}M'/R_{\epsilon}^{\frac{4}{3}}+\frac{\bar{c} \epsilon}{1-\theta}.
\end{equation}

To conclude, consider the  following additional constraints on $\varepsilon$. Namely, $\varepsilon\leq\varepsilon_{*}^{3}/8$ (where $\varepsilon_*$ is as in Theorem~\ref{partialregflattened}), and
\begin{equation}
	\frac{\bar{c}\epsilon}{1-\theta} \leq\frac{\varepsilon_*}{4}.
\end{equation}
Since $0<\theta<1/2$, we may fix $j$ sufficiently large such that
\begin{equation}
	\theta^{j+1}M'/R_{\epsilon}^{\frac{4}{3}}\leq\frac{\varepsilon_*}{4}.
\end{equation}
These choices, together with (\ref{seqCKN}), imply that 
\begin{equation}\label{seqCKN1}
C\left(v^{(k)}, \theta^{j+1}R_{\epsilon}\right)+D\left(q^{(k)}, \theta^{j+1}R_{\epsilon}\right)\leq \varepsilon_*.
\end{equation}
 for $k\geq \bar{K}_{\epsilon,j}$. Finally, Theorem~\ref{partialregflattened}~implies that for all $k\geq \bar{K}_{\epsilon,j}$,
\begin{equation}\label{seqregular}
\norm{v^{(k)}}_{L_\infty(Q^+(\bar{R}/2))} \leq \frac{c_*}{\bar{R}},
\end{equation}
where $\bar{R} = \theta^{j+1}R_\epsilon$. Hence, $\limsup_{k \to \infty} \norm{v^{(k)}}_{L_\infty(Q^+(\bar{R}/2))} < \infty$, as desired.
\end{proof}


\section{Parabolic Sobolev embedding}
\label{sec:parabolicsobolev}

In this section, we recall the parabolic Sobolev embedding theorem into H{\"o}lder spaces used in the proof of Proposition~\ref{pro:Holderestimates}. 

\begin{lemma}[Parabolic Sobolev embedding]
\label{lem:parabolicsobolev}
Let $d \geq 1$ be an integer and $\Omega \subset \R^d$ a bounded $C^2$ domain. Let $0 < T \leq \infty$ and $Q_T = \Omega \times ]0,T[$. Suppose that $1 \leq s,l \leq \infty$ satisfy
\begin{equation}
	0 < \alpha := 2-\frac{d}{s}-\frac{2}{l}  \leq 1.
\end{equation}
If $u \in W^{2,1}_{s,l}(Q_T)$, then
\begin{equation}
	\norm{u}_{C^\alpha_\para(Q_T)} \leq C(d,\Omega,T,s,l) \norm{u}_{W^{2,1}_{s,l}(Q_T)}.
\end{equation}
\end{lemma}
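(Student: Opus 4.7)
The plan is to reduce to the whole-space parabolic Sobolev embedding via a bounded linear extension operator $E \colon W^{2,1}_{s,l}(Q_T) \to W^{2,1}_{s,l}(\R^{d+1})$. Since the constant $C$ is allowed to depend on all of $d, \Omega, T, s, l$, there is no need for any scale- or size-invariance of $E$, and (when $T<\infty$) we may arrange that $Eu$ is compactly supported. The final chain of inequalities will be
\begin{equation*}
\norm{u}_{C^{\alpha}_{\para}(Q_T)} \leq \norm{Eu}_{C^{\alpha}_{\para}(\R^{d+1})} \leq C \norm{Eu}_{W^{2,1}_{s,l}(\R^{d+1})} \leq C \norm{u}_{W^{2,1}_{s,l}(Q_T)},
\end{equation*}
the middle inequality being the whole-space parabolic Sobolev embedding and the outer two the boundedness of $E$ and its adjoint (restriction).

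The extension $E$ is built by extending first in space and then in time. In space, a finite $C^{2}$ atlas of $\overline{\Omega}$ and a subordinate partition of unity reduce matters, after flattening the boundary in each chart, to extending a function on $\R^{d}_+$ to $\R^{d}$ while preserving two weak derivatives. A Stein-type higher-order reflection of the form $u(x', x_d) \mapsto \sum_{k=1}^{3} c_k u(x', -\lambda_k x_d)$ for $x_d < 0$, with coefficients $c_k$ and positive scales $\lambda_k$ chosen so that $u$, $\p_{x_d} u$, and $\p_{x_d}^2 u$ match across $\{x_d = 0\}$, does the job. Because the reflection acts pointwise in $t$, it commutes with $\p_t$ and preserves the mixed norm $L_{s,l}$. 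In time, when $T<\infty$ I multiply by a smooth cutoff in $t$ and then reflect once across $t=0$ and $t=T$; when $T=\infty$ only the face $t=0$ requires treatment. Each step is bounded on $W^{2,1}_{s,l}$ with a constant depending only on the listed parameters.

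For the whole-space embedding, set $v = Eu$ and $f := (\p_t - \Delta) v \in L_{s,l}(\R^{d+1})$. Because $v$ is compactly supported, $v = \Gamma \ast f$, where $\Gamma$ is the heat kernel. The H{\"o}lder continuity of $\Gamma \ast f$ of order $\alpha = 2 - d/s - 2/l \in (0,1]$ is the parabolic analogue of Morrey's embedding: for two points $z_1, z_2$ at parabolic distance $r$, split the integral defining $(\Gamma\ast f)(z_1) - (\Gamma\ast f)(z_2)$ over the parabolic ball of radius $2r$ around $z_1$ and its complement, and apply H{\"o}lder's inequality together with the pointwise bounds on $\Gamma$ and $\nabla_x \Gamma, \p_t \Gamma$. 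This is the standard estimate in Ladyzhenskaya-Solonnikov-Ural'tseva, so here I would simply cite it.

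The main technical point is the construction of the extension operator for $W^{2,1}_{s,l}$ against a boundary that is only $C^{2}$. However, the Stein-type reflection requires only $C^{1,1}$ regularity of each straightening chart, so $\p\Omega \in C^{2}$ is more than enough; the temporal reflection/cutoff is elementary since no initial or terminal trace of $u$ must be respected; and everything else reduces to the classical whole-space Morrey-type estimate.
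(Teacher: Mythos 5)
The paper does not supply a proof of this lemma: it cites Ladyzhenskaya--Solonnikov--Ural'tseva (Lemma 3.3, p.~80) while explicitly noting that LSU only treats the isotropic case $s=l$, and then offers a one-sentence sketch of an alternative --- ``apply an extension operator and combine a parabolic Poincar\'e-Sobolev inequality with Morrey- and Campanato-type arguments'' --- with details omitted. Your proposal is a fleshed-out version of exactly that sketch: a Stein-type reflection extension reducing to $\R^{d+1}$, then a parabolic Morrey estimate. The one genuine variation is how you handle the whole-space step: you use the heat-kernel representation $v=\Gamma\ast f$ and estimate the convolution directly, rather than the intrinsic Poincar\'e--Sobolev/Campanato route, but the underlying Morrey-type splitting is the same computation. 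Two caveats are worth flagging. First, at the end you propose to ``simply cite'' LSU for the whole-space estimate, but LSU does not cover $s\neq l$ --- you would actually need to carry out the anisotropic Morrey argument you sketched (or cite an anisotropic reference). Second, your arrangement that $Eu$ be compactly supported is only available for $T<\infty$; for $T=\infty$ the representation $v=\Gamma\ast f$ needs a separate justification (e.g., localise in time or invoke uniqueness for the heat equation with data prescribed at $t=-\infty$). Neither issue is fatal, but both should be addressed if the details are to be written out in full.
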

In the above, $C^\alpha_\para$ ($0 < \alpha \leq 1$) represents the class of $\alpha$-H{\"o}lder continuous functions in the metric $|x-y| + \sqrt{|t-s|}$ on the relevant domain. 

Lemma~\ref{lem:parabolicsobolev} is well known, with reference often made to the classic book of Ladyzhenskaya, Solonnikov, and Uraltseva~\cite{ladyzhenksayasolonnikovuraltseva}, e.g., Lemma 3.3, p. 80 (which only treats the case $p=q$). A different proof is to apply an extension operator and combine a parabolic Poincar{\'e}-Sobolev inequality with Morrey- and Campanato-type arguments. We omit the details.

If $u$ vanishes on the parabolic boundary, we also have
\begin{corollary}
\label{cor:parabolicsobolevzerobcs}
Assume the hypotheses of Lemma~\ref{lem:parabolicsobolev}. If additionally $u|_{\p'Q_T} = 0$, then
\begin{equation}
	[u]_{C^\alpha_\para(Q_T)} \leq C(d,\Omega,s,l) [u]_{W^{2,1}_{s,l}(Q_T)}.
\end{equation}
The constant is independent of $T$ and translation, rotation, and dilation of $\Omega$.
\end{corollary}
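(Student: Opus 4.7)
The inequality to be proved is invariant under the parabolic rescaling $\tilde u(y,\tau) := u(Ry,R^2\tau)$: both $[u]_{C^\alpha_\para}$ and the homogeneous seminorm $\|\p_t u\|_{L_{s,l}} + \|\nabla^2 u\|_{L_{s,l}}$ transform with the common factor $R^{2-d/s-2/l} = R^\alpha$. My plan is to rescale to a normalized setting with $\tilde T \leq 1$ and $\mathrm{diam}(\tilde\Omega) \leq 1$ by choosing $R = \max\{\sqrt T, \mathrm{diam}(\Omega)\}$, apply Lemma~\ref{lem:parabolicsobolev} there, absorb the lower-order terms using $u|_{\p'Q_T}=0$, and then scale back. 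Because $R$ is a uniform dilation, this will produce a constant depending only on $d$, $s$, $l$, and the shape class of $\Omega$ (i.e., modulo translation, rotation and dilation), which is exactly the invariance claimed.

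In the normalized setting, Lemma~\ref{lem:parabolicsobolev} yields $[\tilde u]_{C^\alpha_\para(\tilde Q)} \leq C\,\|\tilde u\|_{W^{2,1}_{s,l}(\tilde Q)}$, with a constant uniform over $\tilde T \in (0,1]$ and over dilations of $\Omega$ of diameter at most one; this follows from the standard Sobolev-extension-plus-Campanato proof of the lemma, since both the extension operator and the underlying Poincar\'e-type estimates are controlled by the shape class alone. To absorb the full-norm right-hand side into the homogeneous seminorm, I would exploit the vanishing $\tilde u(\cdot,\tau)|_{\p\tilde\Omega}=0$ for a.e.\ $\tau$, together with the Poincar\'e inequality and the Calder\'on-Zygmund $W^{2,s}$-regularity for the Dirichlet Laplacian on the $C^2$ domain $\tilde\Omega$, to obtain the pointwise-in-time bound
\[
\|\tilde u(\cdot,\tau)\|_{L^s(\tilde\Omega)} + \|\nabla \tilde u(\cdot,\tau)\|_{L^s(\tilde\Omega)} \leq C\,\|\nabla^2 \tilde u(\cdot,\tau)\|_{L^s(\tilde\Omega)}.
\]
Taking $L^l_\tau$-norms yields $\|\tilde u\|_{W^{2,1}_{s,l}(\tilde Q)} \leq C\,[\tilde u]_{W^{2,1}_{s,l}(\tilde Q)}$. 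Chaining the two inequalities and undoing the parabolic rescaling then gives the claimed estimate on $Q_T$ with the promised scale-invariant constant.

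The main obstacle is securing uniformity of constants under the rescaling. Since $R \geq \mathrm{diam}(\Omega)$ and $R \geq \sqrt T$, the parabolic rescaling only shrinks the spatial extent and the time horizon; hence neither the Poincar\'e nor the Calder\'on-Zygmund constants on $\tilde\Omega$ blow up, and neither does the constant in Lemma~\ref{lem:parabolicsobolev} on $\tilde Q$. Once this uniformity is checked, the proof is a direct assembly of the three ingredients above.
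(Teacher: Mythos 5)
The paper states this corollary without proof, leaving it as a consequence of Lemma~\ref{lem:parabolicsobolev}; there is therefore no paper argument to compare against, only correctness to check. Your scaling argument is the natural route and is essentially sound: the two seminorms scale with the common factor $R^{\alpha}$, so the best constant is parabolically scale-invariant, and you only have to establish it on a normalized configuration. The spatial absorption $\|u(\cdot,\tau)\|_{L^s}+\|\nabla u(\cdot,\tau)\|_{L^s}\leq C\|\nabla^2 u(\cdot,\tau)\|_{L^s}$ via Poincar\'e plus the Calder\'on--Zygmund $W^{2,s}$ Dirichlet estimate is correct for $1<s<\infty$ (the endpoint cases $s=1,\infty$ require a Green's-function or one-dimensional argument, though they are not used in the paper).

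The one step you should flesh out is the claimed uniformity of the Lemma~\ref{lem:parabolicsobolev} constant over the normalized family, since Lemma~\ref{lem:parabolicsobolev} is a \emph{full-norm} estimate and therefore does not itself scale cleanly; it is not enough to say the constants are ``controlled by the shape class.'' Two observations close this gap. First, for $\tilde T$ small you should use $u|_{t=0}=0$ to extend $u$ by zero backward in time; this embeds $\tilde Q$ into a cylinder of unit time length with the same $W^{2,1}_{s,l}$ seminorms, so the Lemma~\ref{lem:parabolicsobolev} constant is uniform in $\tilde T\in (0,1]$. Second, for the dilated domains $\tilde\Omega=\Omega/R$ one should define the Sobolev extension operator by conjugation with the dilation, $E_{\tilde\Omega}g := [E_{\Omega}(g(R\cdot))](\cdot/R)$; one then checks directly that its operator norm on $W^{2,1}_{s,l}$ is bounded uniformly for $R\geq\operatorname{diam}(\Omega)$, after which the Campanato argument on $\R^d\times\R$ gives a uniform constant. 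Alternatively, a slightly cleaner organization avoids the double normalization altogether: fix $\Omega$ and prove the $T$-independence directly — extension by zero for $T$ below a fixed threshold, and for $T$ large bound the ``far'' pairs $|t-s|>\operatorname{diam}(\Omega)^2$ by $\|u\|_{L^\infty}\leq\operatorname{diam}(\Omega)^{\alpha}[u]_{C^{\alpha}_{\para},\text{near}}$ using $u|_{\p\Omega}=0$ — and then deduce dilation-invariance by a single rescaling. Either route gives the stated conclusion.
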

Corollary~\ref{cor:parabolicsobolevzerobcs} was used to prove Corollary~\ref{cor:HolderperturbedNSE}.

\section{Neumann problem for the pressure}
\label{sec:neumannprob}

In this section, we compile known \emph{weighted} estimates for the elliptic problem
\begin{equation}
\label{eq:ellipticproblem}
\left\lbrace
\begin{aligned}
	\Delta p &= 0 & &\text{ in } \Omega \\
	\frac{\p p}{\p n} &= \div_{\p \Omega} An & &\text{ on } \p\Omega,
	\end{aligned}
	\right.
\end{equation}
where $A \: \overline{\Omega} \to \R^{d\times d}$ is an antisymmetric matrix and $\Omega \subset \R^d$ is a bounded $C^2$ domain ($d \geq 2$). This problem appears naturally in the work of Abe and Giga \cite{AbeGiga,abestokesflow} in order to estimate the harmonic pressure and in Kenig, Lin, and Shen's paper~\cite{keniglinshen} in the context of homogenization (though similar weighted estimates may go back farther).

Because $A$ is antisymmetric, $An$ is a tangential vector field on $\p\Omega$. The operator $\nabla_{\p\Omega}$ and its adjoint $\div_{\p\Omega}$ are intrinsically defined on the manifold $\p\Omega$ (with the ambient metric). By extending into the domain $\Omega$, we have the equivalent extrinsic definition $\nabla_{\p\Omega} = \nabla - n \frac{\p}{\p n}$ and $\div_{\p \Omega} = \tr \nabla_{\p \Omega}$. (When $f$ is a vector field, $\nabla_{\p \Omega} f$ is interpreted as a matrix.) For convenience, we impose $A \in H^1(\Omega)$.

We say that $p \in H^1(\Omega)$ ($\fint_{\Omega} p \, dx = 0$) is a \emph{weak solution} of~\eqref{eq:ellipticproblem} if $p$ satisfies
\begin{equation}
	\label{eq:weakformulation}
	\int_\Omega \nabla p \cdot \nabla \varphi \, dx + \int_{\p \Omega} An \cdot \nabla \varphi \, dS = 0
\end{equation}
for all $\varphi \in C^2(\overline{\Omega})$. (Notice that $An \cdot \nabla \varphi = An \cdot \nabla_{\p \Omega} \varphi$.) By density and $A|_{\p \Omega} \in H^{\frac{1}{2}}(\p\Omega)$,~\eqref{eq:weakformulation} will be verified for all $\varphi \in H^1(\Omega)$. Hence, existence and uniqueness is guaranteed by the Riesz representation theorem in $H^1_\avg(\Omega)$, consisting of $H^1(\Omega)$ functions with zero average, with inner product $\la p,q \ra = \int_{\Omega} \nabla p \cdot \nabla q \, dx$.

\begin{lemma}[Elliptic estimate]
\label{lem:ellipticest}
Let $\Omega \subset \R^d$ be a bounded $C^2$ domain ($d \geq 2$) and $A \in H^1(\Omega)$ be an antisymmetric matrix.
	Let $p \in H^1(\Omega)$ ($\fint_\Omega p \, dx = 0$) be the unique weak solution of~\eqref{eq:ellipticproblem}.
Then the following estimates hold:
\begin{itemize}
\item If $A \in C(\overline{\Omega})$, then
	\begin{equation}
	\label{ellipticest1}
	\sup_{x \in \Omega} \dist(x,\Omega) |\nabla p(x)| \leq C(d) \norm{A}_{L_\infty(\p \Omega)}.
	\end{equation}
\item For all $0 < \alpha < 1$, if $A \in C^\alpha(\overline{\Omega})$, then
	\begin{equation}
	\label{ellipticest2}
	\sup_{x \in \Omega} \dist(x,\Omega)^{1-\alpha} |\nabla p(x)| \leq C(d,\alpha) [A]_{C^\alpha(\Omega)}.
	\end{equation}
\end{itemize}
\end{lemma}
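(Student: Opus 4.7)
The plan is to prove both bounds simultaneously via a Green's function representation. Let $N(x,y)$ denote the Neumann Green's function of $\Omega$, normalized so that $-\Delta_y N(x,\cdot) = \delta_x - 1/|\Omega|$, $\partial_n N(x,\cdot)|_{\partial\Omega} = 0$, and $\int_\Omega N(x,\cdot)\,dy = 0$. Testing the weak formulation~\eqref{eq:weakformulation} against $N(x_0,\cdot)$ and exploiting the antisymmetry of $A$ (which ensures $(An)\cdot n = 0$, i.e. $An$ is tangential), I integrate the surface divergence onto the kernel to obtain
\[
p(x_0) = -\int_{\partial\Omega} \nabla_{\partial\Omega,y}N(x_0,y)\cdot (An)(y)\,dS(y).
\]
Differentiating in $x_0$ and invoking the classical pointwise estimate $|\nabla_x\nabla_y N(x,y)|\leq C|x-y|^{-d}$ (valid for $x\in\Omega$, $y\in\partial\Omega$ on a $C^2$ domain), I arrive at
\[
|\nabla p(x_0)| \leq C\int_{\partial\Omega}\frac{|A(y)|}{|x_0-y|^d}\,dS(y).
\]

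For~\eqref{ellipticest1}, with $d_0 := \dist(x_0,\partial\Omega)$ and $y_0 \in \partial\Omega$ a nearest point, I parameterize $\partial\Omega$ as a graph over its tangent plane at $y_0$. A routine computation in tangential-normal coordinates (using $|y-x_0|^2 \geq d_0^2 + |y'-y_0'|^2$ along the boundary) yields $\int_{\partial\Omega}|x_0-y|^{-d}\,dS(y) \leq C/d_0$, and bounding $|A|$ by $\|A\|_{L_\infty(\partial\Omega)}$ gives the result.

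For~\eqref{ellipticest2}, the main point is a cancellation: \emph{for any constant antisymmetric matrix $A_0$, the boundary datum $\div_{\partial\Omega}(A_0 n)$ vanishes identically}. Indeed, extending $n$ near $\partial\Omega$ by $n = \nabla d$ where $d$ is the signed distance function, one has $\partial_n n = 0$ (the $\nabla d$-flow consists of geodesics), so
\[
\div_{\partial\Omega}(A_0 n) = \div(A_0 n) - n\cdot\partial_n(A_0 n) = \sum_{i,j}A_{0,ij}\partial_i n_j\big|_{\partial\Omega};
\]
but $\partial_i n_j = \partial_i\partial_j d$ is symmetric, and the trace of the product of a symmetric and an antisymmetric matrix is zero. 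Hence subtracting $A(y_0)$ from $A$ leaves both the antisymmetry and the solution $p$ unchanged. With the modified data $\tilde{A} := A - A(y_0)$ satisfying $|\tilde{A}(y)| \leq [A]_{C^\alpha}|y-y_0|^\alpha \leq 2^\alpha [A]_{C^\alpha}|x_0-y|^\alpha$ (by the triangle inequality, since $|x_0-y_0| = d_0 \leq |x_0-y|$ for $y\in\partial\Omega$), the representation and the same boundary integral calculation yield
\[
|\nabla p(x_0)| \leq C[A]_{C^\alpha}\int_{\partial\Omega}\frac{dS(y)}{|x_0-y|^{d-\alpha}} \leq \frac{C\,[A]_{C^\alpha}}{d_0^{1-\alpha}}.
\]

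The main technical obstacle is this cancellation $\div_{\partial\Omega}(A_0 n)\equiv 0$: without it, upgrading from $\|A\|_{L_\infty}$ to the seminorm $[A]_{C^\alpha}$ in~\eqref{ellipticest2} would necessarily leave behind a scale-incompatible lower-order term proportional to $\|A\|_{L_\infty}$. The role of antisymmetry of $A$ enters the proof twice: once to make $An$ tangential (so that the divergence on the boundary is well-defined and can be integrated by parts), and once, more subtly, to force the curvature contribution $\operatorname{tr}(A_0 \cdot \operatorname{Hess} d)$ to vanish.
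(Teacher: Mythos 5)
Your proposal is essentially a self-contained direct proof, whereas the paper defers to the literature: estimate~\eqref{ellipticest1} is cited from Kenig--Lin--Shen (kernel approach) and Abe--Giga (blow-up argument), and~\eqref{ellipticest2} is cited from Abe, who also uses a blow-up argument. Your route for~\eqref{ellipticest1} matches the Kenig--Lin--Shen strategy the paper points to: the Neumann function representation
\[
p(x_0) = -\int_{\p\Omega} \nabla_{\p\Omega,y} N(x_0,y)\cdot (An)(y)\,dS(y),
\]
the classical bound $|\nabla_x\nabla_y N(x,y)|\lesssim |x-y|^{-d}$, and the boundary-layer integral computation. That all checks out. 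For~\eqref{ellipticest2}, however, your argument is genuinely different from what the paper cites. The Abe reference establishes the estimate by a compactness/rescaling contradiction argument, whereas you prove it directly from the kernel representation combined with the crucial observation that $\div_{\p\Omega}(A_0 n)\equiv 0$ for any constant antisymmetric $A_0$. I verified this cancellation: writing $n = \nabla d$ for the signed distance $d$, one has $\p_n n = \tfrac12\nabla|\nabla d|^2 = 0$ and $\div(A_0 n) = A_{0,ij}\,\p_i\p_j d = 0$ by antisymmetry against the symmetric Hessian, so $\div_{\p\Omega}(A_0 n) = \div(A_0 n) - n\cdot\p_n(A_0 n) = 0$. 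Since this also kills the boundary term $\int_{\p\Omega} A_0 n\cdot\nabla_{\p\Omega}\varphi\,dS$ in the weak formulation after integrating by parts on the closed surface $\p\Omega$, replacing $A$ by $A-A(y_0)$ leaves $p$ unchanged, and the remainder of your computation is correct. This buys a unified, scale-invariant kernel proof of both bounds without passing to a blow-up limit.

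One routine gap worth flagging: you cannot literally test the weak formulation~\eqref{eq:weakformulation} against $N(x_0,\cdot)$ when $d\geq 3$, since $\nabla_y N(x_0,\cdot)\sim |y-x_0|^{1-d}$ fails to be in $L^2$ near $x_0$. The standard fix is to note that $p$ is harmonic (hence smooth) in $\Omega$, derive the representation from Green's second identity for the regularized kernel or for $A\in C^1$, and then pass to the limit by the approximation argument the paper alludes to. Similarly, the kernel estimate $|\nabla_x\nabla_y N|\lesssim |x-y|^{-d}$ on $C^2$ (or $C^{1,\gamma}$) domains is correct but is itself a nontrivial fact that deserves a citation rather than the label ``classical'' in passing. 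Neither issue affects the correctness of the scheme.
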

The estimate~\eqref{ellipticest1} was proved by Abe and Giga in \cite{AbeGiga,abestokesflow} 
for bounded and exterior $C^3$ domains by a blow-up argument and by Kenig, Lin, and Shen in \cite[Lemma 6.2]{keniglinshen} for bounded $C^{1,\gamma}$ domains by directly estimating the kernel representation. Technically, \cite{keniglinshen} assumes $A \in C^1(\overline{\Omega})$, but one may use an approximation argument to obtain $A \in C(\overline{\Omega})$. The estimate~\eqref{ellipticest2} is proved in~\cite{abestokesflow} for uniformly $C^2$ domains. Hence, bounded $C^2$ domains are \emph{strongly admissible}, in the language of~\cite[Remark 2.10]{abestokesflow}. See~\cite[Section 2]{abestokesflow} for an overview of the history and terminology.

In~\cite{AbeGiga,abestokesflow}, Abe and Giga use a slightly different notion of solution which is adapted to the estimates~\eqref{ellipticest1}-\eqref{ellipticest2}. This is necessary for their blow-up arguments. However, in the context of Lemma~\ref{lem:ellipticest}, their solutions agree with the unique weak solution described above.

We now mention how~\eqref{eq:ellipticproblem} is relevant to the Navier-Stokes equations. In the next two results, we adopt the notation of Lemma~\ref{lem:ellipticest}.
\begin{lemma}
\label{cor:pressurecor1}
If $w \in H^2(\Omega)$ is a divergence-free vector field, then $\pi$ ($\fint_\Omega \pi \, dx = 0$) satisfying
\begin{equation}
	\nabla \pi =\bQ(\Delta w)
\end{equation}
is the unique weak solution of~\eqref{eq:ellipticproblem} with $A = (\nabla w)^T - \nabla w$.
\end{lemma}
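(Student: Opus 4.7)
The approach is to verify directly the weak formulation \eqref{eq:weakformulation}. Since $w\in H^2(\Omega)$ is divergence-free, $\Delta w\in L^2(\Omega)$, so the Helmholtz decomposition produces $\Delta w = \bP(\Delta w) + \bQ(\Delta w)$, where $\bQ(\Delta w) = \nabla\pi$ for a unique $\pi\in H^1_{\avg}(\Omega)$. The weak solution of the Neumann problem with data $\div_{\p\Omega}(An)$ is also unique in $H^1_{\avg}(\Omega)$ by the Lax--Milgram/Riesz argument mentioned after \eqref{eq:weakformulation}, so once $\pi$ is shown to satisfy \eqref{eq:weakformulation} the two coincide.

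The plan is to test against an arbitrary $\varphi\in C^2(\overline\Omega)$ and pass to $H^1(\Omega)$ by density. First, since $\bP(\Delta w)$ is $L^2$-divergence-free with vanishing normal trace on $\p\Omega$, an integration by parts shows $\int_\Omega \bP(\Delta w)\cdot\nabla\varphi\,dx = 0$; hence
\begin{equation*}
\int_\Omega \nabla\pi\cdot\nabla\varphi\,dx = \int_\Omega \Delta w\cdot\nabla\varphi\,dx.
\end{equation*}

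The key algebraic step is to exploit $\div w = 0$ in order to write $\Delta w$ as a divergence of an antisymmetric object. With summation convention,
\begin{equation*}
\Delta w_i = \p_k\p_k w_i = \p_k\bigl(\p_k w_i - \p_i w_k\bigr) + \p_i(\p_k w_k) = \p_k\bigl(\p_k w_i - \p_i w_k\bigr).
\end{equation*}
Integrating by parts in $x_k$, the interior term
$\int_\Omega (\p_k w_i - \p_i w_k)\,\p_k\p_i\varphi\,dx$
vanishes because $\p_k w_i - \p_i w_k$ is antisymmetric in $(i,k)$ while $\p_k\p_i\varphi$ is symmetric, and only a boundary term survives:
\begin{equation*}
\int_\Omega \Delta w\cdot\nabla\varphi\,dx = \int_{\p\Omega} n_k\bigl(\p_k w_i - \p_i w_k\bigr)\,\p_i\varphi\,dS.
\end{equation*}
With $A = (\nabla w)^T - \nabla w$, i.e. $A_{ij} = \p_j w_i - \p_i w_j$, one has $(An)_i = (\p_i w_k - \p_k w_i)n_k$, so the right-hand side equals $-\int_{\p\Omega} An\cdot\nabla\varphi\,dS$. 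Combining this with the previous display yields \eqref{eq:weakformulation} for $\varphi\in C^2(\overline\Omega)$, and density gives it for all $\varphi\in H^1(\Omega)$, which identifies $\pi$ as the weak solution.

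No real obstacle is anticipated: the content is the bookkeeping in the Helmholtz orthogonality step (which uses only $\bP(\Delta w)\cdot n|_{\p\Omega}=0$ in the trace sense, available since $\bP(\Delta w)\in L^2_\sigma$) and the antisymmetrization of $\nabla w$ enabled by $\div w = 0$. The only subtlety is matching signs between the convention $A=(\nabla w)^T-\nabla w$ and the identity $\Delta w = \div\bigl(\nabla w - (\nabla w)^T\bigr)$ in the divergence-free setting; once this is tracked, the equality $\nabla\pi = \bQ(\Delta w)$ translates exactly into the Neumann boundary condition $\p\pi/\p n = \div_{\p\Omega}(An)$.
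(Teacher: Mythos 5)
Your proof is correct, and it effectively unwinds what the paper leaves to a citation: the paper argues at the level of the strong form ($\bQ(\Delta w)$ is divergence-free so $\Delta\pi=0$, and $\p\pi/\p n = \Delta w\cdot n$ since $\bP(\Delta w)$ has zero normal trace), then refers to Abe's Proposition~4.1 in~\cite{abestokesflow} for the surface-divergence identity $\Delta w\cdot n = \div_{\p\Omega}((\nabla w)^T-\nabla w)n$. You instead verify the weak formulation~\eqref{eq:weakformulation} directly, and the integration-by-parts-plus-antisymmetry computation you carry out is exactly the content of Abe's ``direct computation,'' so the argument is essentially the same but self-contained. One small bookkeeping issue to fix: your displayed component formula $A_{ij}=\p_j w_i-\p_i w_j$ is inconsistent with the subsequent $(An)_i=(\p_i w_k-\p_k w_i)n_k$ that you then correctly use. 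Under the Jacobian convention $(\nabla w)_{ij}=\p_j w_i$ one has $A_{ij}=\p_i w_j-\p_j w_i$, which is what makes $(An)_i=(\p_i w_k-\p_k w_i)n_k$ correct and yields the boundary term $-\int_{\p\Omega}An\cdot\nabla\varphi\,dS$ as required by~\eqref{eq:weakformulation}; with your stated $A_{ij}$ the sign would come out wrong. State the convention once and the proof is airtight.
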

This is used to prove Lemma~\ref{lem:subpressurelem}.
Here, $\bP$ and $\bQ$ represent the orthogonal projections onto divergence-free and gradient fields, respectively, in the Helmholtz decomposition.
Since $\Delta w$ and $\bP(\Delta w)$ are divergence free, $\bQ(\Delta w)$ must be divergence free as well. Hence, $\Delta \pi = 0$. Since $\frac{\p \pi}{\p n} = \Delta w \cdot n$, a direct computation (see the proof of Proposition~4.1 in~\cite{abestokesflow}) gives
\begin{equation}
	\Delta w \cdot n = \div_{\p \Omega} ((\nabla w)^T - \nabla w) n).
\end{equation}

\begin{lemma}
\label{cor:pressurecor2}
Let $F \in H^1_0(\Omega;\R^{d\times d})$ be a matrix-valued function and 
\begin{equation}
	\Phi_1 = (-\Delta)^{-1} \div \div F. 
\end{equation}
In other words, $\Phi_1 = \sum_{i,j=1}^d \cR_i \cR_j F_{ij}$, where $\cR_i$ is the $i$th Riesz transform ($1 \leq i \leq d$).
Additionally, write $h = \Gamma \ast \div F$, where $\Gamma$ is the fundamental solution of $-\Delta$. Then $\Phi_2$ ($\fint_\Omega \Phi_2 \, dx = 0$) satisfying
\begin{equation}
	\nabla \Phi_2 = \bQ(\div F) - \nabla \Phi_1
\end{equation}
is the unique weak solution of~\eqref{eq:ellipticproblem} with $A = \nabla h - (\nabla h)^T$.
\end{lemma}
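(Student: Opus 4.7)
The plan is to verify directly that the scalar $\Phi_2$ (normalized by $\fint_\Omega \Phi_2 \, dx = 0$) solves the weak Neumann formulation of~\eqref{eq:ellipticproblem}: for every $\varphi \in H^1(\Omega)$,
\begin{equation*}
    \int_\Omega \nabla \Phi_2 \cdot \nabla \varphi \, dx \;=\; -\int_{\p\Omega} An \cdot \nabla \varphi \, dS,
\end{equation*}
where $A = \nabla h - (\nabla h)^T$. Note $An$ is tangential because $A$ is antisymmetric, so the right-hand side agrees with $\int_{\p\Omega} \div_{\p\Omega}(An) \, \varphi \, dS$ after a tangential integration by parts. Since every quantity depends continuously on $F$ in $H^1$-based norms via Calder{\'o}n--Zygmund bounds on Riesz transforms, I may first assume $F \in C^\infty_c(\Omega;\R^{d\times d})$, where every manipulation is classical, and recover the general $H^1_0$ case by approximation.

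The crux is a pointwise algebraic identity in $\Omega$,
\begin{equation*}
    \nabla \Phi_1 \;=\; \Delta h + \div A, \qquad (\div A)_i := \p_j A_{ij}.
\end{equation*}
To derive it, observe that $h_i = \Gamma \ast \p_j F_{ij} = \p_j(\Gamma \ast F_{ij})$, so $\div h = \p_i \p_j (\Gamma \ast F_{ij}) = \Phi_1$. Differentiating and symmetrizing,
\begin{equation*}
    \p_i \Phi_1 \;=\; \p_i \p_j h_j \;=\; \p_j(\p_i h_j - \p_j h_i) + \Delta h_i \;=\; \p_j A_{ij} + \Delta h_i.
\end{equation*}
Combined with $\Delta h = -\div F$ on $\R^d$ (hence on $\Omega$), this rearranges to $\nabla \Phi_1 = \div A - \div F$. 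Substituting into the definition of $\nabla \Phi_2$ and using the Helmholtz decomposition $\bQ = I - \bP$ yields the clean working formula $\nabla \Phi_2 = \bP(\div F) - \div A$. Harmonicity of $\Phi_2$ is then automatic: $\bP(\div F)$ is divergence-free by definition, and $\div \div A = \p_i \p_j A_{ij} = 0$ by antisymmetry of $A$ paired against the symmetry of mixed partials.

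For the boundary data, I pair against $\nabla \varphi$ for arbitrary $\varphi \in H^1(\Omega)$. The $\bP$ term contributes nothing by Helmholtz orthogonality. For the remaining term, a single integration by parts gives
\begin{equation*}
    -\int_\Omega (\div A) \cdot \nabla \varphi \, dx \;=\; \int_\Omega A_{ij}\, \p_i \p_j \varphi \, dx - \int_{\p\Omega} A_{ij} n_j \, \p_i \varphi \, dS,
\end{equation*}
and the bulk integral vanishes once more by antisymmetry of $A$ against the symmetric Hessian. What is left is exactly $-\int_{\p\Omega} An \cdot \nabla \varphi \, dS$, the required identity. Uniqueness for the Neumann problem (already recorded before the lemma) then identifies $\Phi_2$ as the asserted weak solution.

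The hard part is mostly organizational: one must group the algebra so that the antisymmetry of $A$ is available to eliminate bulk terms at each step, and carry along the Helmholtz signs consistently between $\bQ(\div F)$ and $\nabla \Phi_1$. The only genuinely technical point is justifying the distributional identity $\Delta h = -\div F$ in $\Omega$ when $F \in H^1_0$ is merely extended by zero to $\R^d$; because $F$ has zero trace there is no singular boundary contribution, and a routine mollification of $F$ allows one to perform all the above manipulations classically before passing to the limit.
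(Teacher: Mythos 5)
Your proof is essentially complete and correct, and it is worth noting that the paper itself does \emph{not} prove this lemma: it simply defers to~\cite[Proposition~3.2]{abestokesflow} for smooth, compactly supported $F$. What you give is a self-contained verification, and the scheme (derive a pointwise identity relating $\nabla\Phi_1$, $\Delta h$, and $\div A$; use antisymmetry of $A$ against symmetric Hessians to kill bulk terms; read off the Neumann data from the boundary term) is in the same spirit as Abe's computation, but made explicit. The central algebraic identity $\p_i \Phi_1 = \p_j A_{ij} + \Delta h_i$ is correct, as are the harmonicity check via $\p_i\p_j A_{ij}=0$ and the boundary pairing. Your closing remark about justifying $\Delta h = -\div F$ for $F\in H^1_0$ extended by zero is exactly the right thing to address, and the argument you give for it is fine.

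There is, however, one step you should not have passed over silently. Starting from the lemma as printed, $\nabla\Phi_2 = \bQ(\div F)-\nabla\Phi_1$, and your correct identity $\nabla\Phi_1 = \div A - \div F$, the substitution $\bQ = I-\bP$ gives
\begin{equation*}
\nabla\Phi_2 = \bQ(\div F)-\div A+\div F = 2\div F-\bP(\div F)-\div A,
\end{equation*}
which is \emph{not} your ``clean working formula'' $\bP(\div F)-\div A$, and whose divergence is $2\div\div F\neq 0$, so $\Phi_2$ would not even be harmonic. Your formula $\bP(\div F)-\div A$ follows instead from $\nabla\Phi_2 = -\bQ(\div F)-\nabla\Phi_1$. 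That sign is actually the one used in the application (the proof of Proposition~\ref{pro:pressureestimates} sets $\nabla\Phi = -\bQ(u\cdot\nabla u)$ and $\Phi_2 = \Phi - \Phi_1$), and it is also the sign for which $\Phi_2\equiv 0$ when $\Omega=\R^d$, as a pure boundary correction should. So the printed statement of Lemma~\ref{cor:pressurecor2} has a sign typo in the definition of $\nabla\Phi_2$ (similarly, $(-\Delta)^{-1}\div\div F = -\sum_{i,j}\cR_i\cR_j F_{ij}$, not $+$), and your algebra quietly corrected it without comment. You should flag the discrepancy rather than leave a step that, read literally against the stated lemma, does not follow. With that sign repaired, your proof is correct and, unlike the paper, complete.
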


This is proven for smooth, compactly supported $F$ in \cite[Proposition~3.2]{abestokesflow}. We use it with $F = u \otimes u$ to conclude the proof of Proposition~\ref{pro:pressureestimates}.

\begin{remark}[Boundary regularity in Theorem~\ref{thm:mbas}]
\label{rmk:regularityremark}
The current obstruction to $C^2$ regularity of $\Omega$ in Theorem~\ref{thm:mbas} is Proposition~\ref{pro:AbeGiga}.  While it is not recorded in the literature, it appears that Proposition~\ref{pro:AbeGiga} remains true when $\Omega$ is $C^{2,\alpha}$. Indeed, $C^3$ was exploited in~\cite{AbeGiga} in two major ways: i) to prove ``admissibility"~\eqref{ellipticest1}, but the assumption has been weakened to $C^{1,\gamma}$, as mentioned above, and ii) to apply the Schauder theory for the Stokes equations developed by Solonnikov in~\cite{Sol1977}.
\end{remark}

\end{appendix}

\subsubsection*{Acknowledgments}
The authors would like to thank Gregory Seregin and Vladim{\'i}r {\v S}ver{\'a}k for helpful discussions. DA was supported by the US Department of Defense through the NDSEG Graduate Fellowship and by a travel grant from the Council of Graduate Students at the University of Minnesota.

\subsubsection*{Ethics}
The authors declare no conflicts of interest.

\bibliographystyle{plain}
\bibliography{journalsubmissionrevised}

\end{document}